


\documentclass[a4paper]{article}
\setlength{\textheight}{23.30cm}
\setlength{\textwidth}{15.5cm}
\setlength{\oddsidemargin}{0.2cm}
\setlength{\evensidemargin}{0.2cm}
\setlength{\topmargin}{0cm}
\setlength{\parindent}{1.2cm}


\usepackage{amsthm,amsmath,amssymb}
\bibliographystyle{plain}

%
%


\usepackage{makeidx}
\usepackage{pifont}
\usepackage{color}
\usepackage{textcomp}
\usepackage{graphicx}
\usepackage{pgf,tikz}


\newtheorem{teor}{Theorem}[section]
\newtheorem{defin}[teor]{Definition}
\newtheorem{lemm}[teor]{Lemma}
\newtheorem{osse}[teor]{Remark}
\newtheorem{prop}[teor]{Proposition}
\newtheorem{defi}[teor]{Definition}
\newtheorem{coro}[teor]{Corollary}
\newtheorem{prob}[teor]{Problem}

\newcommand{\bele}{\begin{lemm}\begin{sl}}
\newcommand{\enle}{\end{sl}\end{lemm}}
\newcommand{\bedef}{\begin{defi}\begin{sl}}
\newcommand{\eddef}{\end{sl}\end{defi}}
\newcommand{\bete}{\begin{teor}\begin{sl}}
\newcommand{\ente}{\end{sl}\end{teor}}
\newcommand{\beos}{\begin{osse}\begin{rm}}
\newcommand{\eddos}{\end{rm}\end{osse}}
\newcommand{\bepr}{\begin{prop}\begin{sl}}
\newcommand{\empr}{\end{sl}\end{prop}}
\newcommand{\bepro}{\begin{prob}\begin{rm}}
\newcommand{\empro}{\end{rm}\end{prob}}
\newcommand{\bede}{\begin{defin}\begin{sl}}
\newcommand{\edde}{\end{sl}\end{defin}}
\newcommand{\beco}{\begin{coro}\begin{sl}}
\newcommand{\enco}{\end{sl}\end{coro}}


\newcommand{\quext}{\quad\text}


\newcommand{\RR}{\mathbb{R}}
\newcommand{\NN}{\mathbb{N}}


\newcommand{\beeq}[1]{\begin{equation}\label{#1}}
\newcommand{\eddeq}{\end{equation}}

\newcommand{\beeqa}[1]{\begin{eqnarray}\label{#1}}
\newcommand{\eddeqa}{\end{eqnarray}}

\newcommand{\beal}[1]{\begin{align}\label{#1}}
\newcommand{\eddal}{\end{align}}

\newcommand{\bespl}[1]{\begin{split}\label{#1}}
\newcommand{\edspl}{\end{split}}

\newcommand{\bega}[1]{\begin{gather}\label{#1}}
\newcommand{\edga}{\end{gather}}

\newcommand{\beeqax}{\begin{eqnarray*}}
\newcommand{\eddeqax}{\end{eqnarray*}}

\def\qed{\ifmmode 
  \else \leavevmode\unskip\penalty9999 \hbox{}\nobreak\hfill
  \fi
  \quad\hbox{\hskip.5em\vrule width.4em height.6em depth.05em\hskip.1em}}
\def\endproofsym{\qed}
\renewenvironment{proof}[1][Proof]{\trivlist\item[\hskip\labelsep{\hskip0pt
    {\normalfont\scshape#1.}\hskip .321429\parindent}]\ignorespaces}
{\endproofsym\endtrivlist}
\def\endnobox{\def\endproofsym{}\end{proof}\def\endproofsym{\qed}}

\newcommand{\no}{\nonumber}

\newcommand{\beeqao}{\begin{eqnarray}\no}
\newcommand{\bealo}{\begin{align}\no}
\newcommand{\besplo}{\begin{split}\no}
\newcommand{\begao}{\begin{gather}\no}


\newcommand{\eps}{\varepsilon}

\newcommand{\duav}[1]{\langle{#1}\rangle}





\newcommand{\bm}{\boldsymbol{m}}

\newcommand{\dt}{\partial_t}

\newcommand{\perogni}{\forall\,}
\newcommand{\esiste}{\exists\,}

\newcommand{\io}{\int_\Omega}

\newcommand{\iTT}{\int_0^T}

\newcommand{\iTo}{\iTT\!\io}

\newcommand{\OO}{_{\Omega}}

\newcommand{\bzero}{\boldsymbol{0}}

\newcommand{\fhi}{\varphi}


\newcommand{\lhs}{left hand side}
\newcommand{\rhs}{right hand side}



\DeclareMathOperator{\per}{per}
\DeclareMathOperator{\dive}{div}
\DeclareMathOperator{\deriv}{d}

\DeclareMathOperator{\dist}{dist}
\DeclareMathOperator{\Dist}{Dist}


\newcommand{\LDHD}{L^2(0,T;H^2(\Omega))}


\let\TeXchi\chi
\def\chi{{\setbox0 \hbox{\mathsurround0pt
$\TeXchi$}\hbox{\raise\dp0 \copy0 }}}


\newcommand{\zzn}{_{0,n}}

\newcommand{\teta}{\vartheta}

\newcommand{\calH}{{\mathcal H}}

\newcommand{\calA}{{\mathcal A}}

\newcommand{\calE}{{\mathcal E}}

\newcommand{\calN}{{\mathcal N}}

\newcommand{\calB}{{\mathcal B}}

\newcommand{\calS}{{\mathcal S}}

\newcommand{\calV}{{\mathcal V}}

\newcommand{\calW}{{\mathcal W}}

\newcommand{\ZZZ}{{\mathcal Z}}

\newcommand{\tetagiu}{\underline{\teta}}

\newcommand{\dit}{\deriv\!t}
\newcommand{\dis}{\deriv\!s}

\newcommand{\dir}{\deriv\!r}

\newcommand{\ddt}{\frac{\deriv\!{}}{\dit}}
\newcommand{\dds}{\frac{\deriv\!{}}{\dis}}



\newcommand{\bu}{\boldsymbol{u}}

\newcommand{\vt}{\vartheta}

\newcommand{\vu}{\boldsymbol{u}}
\newcommand{\vm}{\boldsymbol{m}}

\newcommand{\barvt}{\overline{\vt}}

\newcommand{\barfhi}{\overline{\fhi}}
\newcommand{\barmu}{\overline{\mu}}
\newcommand{\barz}{\overline{z}}
\newcommand{\barvu}{\overline{\vu}}

\def\div{{\rm div}}

\def\th{{\vartheta}}
\def\tho{{{\vartheta}_\Omega}}

\def \l {\langle}
\def \r {\rangle}


\newenvironment{bettirev}{\color{blue}}{\color{black}}
\newcommand{\bber}{\begin{bettirev}}
\newcommand{\eber}{\end{bettirev}}

\newenvironment{girev}{\color{red}}{\color{black}}
\newcommand{\III}{\begin{girev}}
\newcommand{\EEE}{\end{girev}}

\newenvironment{michelarev}{\color{red}}{\color{black}}
\newcommand{\bmicr}{\begin{michelarev}}
\newcommand{\emicr}{\end{michelarev}}


\numberwithin{equation}{section}
\begin{document}

\title{Regularity and long-time behavior for a thermodynamically consistent model for
 complex fluids in two space dimensions\thanks{All the authors are partially supported by the GNAMPA
 (Gruppo Nazionale per l'Analisi Matematica, la Probabilit\`a e le loro Applicazioni)
 of INdAM (Istituto Nazionale di Alta Matematica), through the project GNAMPA 2016 ``Regolarit\`a e comportamento asintotico di soluzioni di equazioni
 paraboliche'' (coord.\ Prof.\ S.~Polidoro) and by the University of Modena and Reggio Emilia through the project FAR2015 ``Equazioni differenziali:
 problemi evolutivi, variazionali ed applicazioni'' (coord. Prof. S.~Polidoro)}}

\author{
{Michela Eleuteri \thanks{University of Modena and Reggio Emilia, Dipartimento di Scienze Fisiche, Informatiche e Matematiche, via Campi 213/b, I-41125 Modena (Italy).
E-mail: \textit{michela.eleuteri@unimore.it}}}
\and
{Stefania Gatti\thanks{University of Modena and Reggio Emilia, Dipartimento di Scienze Fisiche, Informatiche e Matematiche, via Campi 213/b, I-41125 Modena (Italy).
E-mail: \textit{stefania.gatti@unimore.it}}}
\and
{Giulio Schimperna\thanks{Dipartimento di Matematica ``F. Casorati'',
Universit\`a degli Studi di Pavia,
via Ferrata 5, I-27100 Pavia, Italy.
E-mail: \textit{giusch04@unipv.it}}}}

\maketitle

\begin{abstract}\noindent
 We consider a thermodynamically consistent model for the evolution of
 thermally conducting two-phase incompressible fluids. Complementing
 previous results, we prove additional regularity properties of solutions
 in the case when the evolution takes place in the two-dimensional flat torus
 with periodic boundary conditions. Thanks to
 improved regularity, we can also prove uniqueness
 and characterize the long-time behavior of trajectories showing existence
 of the global attractor in a suitable phase-space.
\end{abstract}

\smallskip

\noindent \textbf{Keywords}: Cahn-Hilliard, Navier-Stokes, incompressible
non-isothermal binary fluid, thermodynamically consistent model,
regularity of solutions, long-time behavior.

\smallskip

\noindent \textbf{MSC 2010}: 35Q35, 35K25, 76D05, 35D35, 80A22, 37L30.


\section{Introduction}
\label{sec:intro}

We consider here a mathematical model for two-phase flows
of non-isothermal incompressible fluids in a bounded
container $\Omega\subset\RR^2$. The model consists in a PDE
system describing the evolution of the unknown variables $\vu$ (macroscopic
velocity), $\fhi$ (order parameter), $\mu$ (chemical potential),
$\vt$ (absolute temperature) and taking the form
\begin{align}\label{incom}
  & \dive \vu = 0, \\
 \label{ns}
  & \vu_t + \vu \cdot \nabla \vu + \nabla p
    = \Delta \vu - \dive ( \nabla \fhi \otimes \nabla \fhi ),\\
 \label{CH1}
  & \fhi_t + \vu \cdot \nabla \fhi = \Delta \mu, \\
 \label{CH2}
  & \mu = - \Delta \fhi + F'(\fhi) - \vt, \\
 \label{calore}
  & \vt_t + \vu \cdot \nabla \vt + \vt \big( \fhi_t + \vu \cdot \nabla \fhi \big)
    - \dive(\kappa(\vt)\nabla \vt) = | \nabla \vu |^2 + | \nabla \mu |^2.
\end{align}
Relation~\eqref{ns}, with the incompressibility constraint \eqref{incom},
represents a variant of the Navier-Stokes equations;
\eqref{CH1}-\eqref{CH2} correspond
to a form of the Cahn-Hilliard system \cite{CH} for phase separation,
while \eqref{calore} is the internal energy equation describing
the evolution of temperature. As usual, the variable $p$
in the Navier-Stokes system~\eqref{ns} represents the (unknown) pressure.
The function $F$ whose derivative appears in \eqref{CH2} is a possibly non-convex
potential whose minima represent the least energy configurations of
the phase variable. Here we will assume that $F$ is smooth
and has a power-like growth at infinity.
Finally, the function $\kappa(\vt)$ in~\eqref{calore} denotes the
heat conductivity coefficient, assumed to grow
at infinity like a sufficiently high power of $\vt$
(see \eqref{hp:kappa} below).

The system is highly nonlinear and
contains complicated coupling terms; however these features
arise naturally and are directly related to the thermodynamical
consistency of the model. In particular, the quadratic terms
on the \rhs\ of \eqref{calore}, which constitute the main
difficulty in the mathematical analysis, describe the
heat production coming from dissipation of kinetic and chemical
energy, respectively. We may also note that transport effects are
admitted for all variables in view of the occurrence of
material derivatives in \eqref{ns}, \eqref{CH1}
and~\eqref{calore}.  In order to avoid complications
related to interactions with the boundary, we will assume here
$\Omega=[0,1]\times[0,1]$ to be the two-dimensional flat torus.
Correspondingly, we will take periodic boundary conditions
for all unknowns.

System~\eqref{incom}-\eqref{calore} has been first introduced in 
\cite{ERS} and can be considered as a coupling
between the Navier-Stokes equations and the thermodynamically consistent
model for phase transitions proposed by M.~Fr\'emond in \cite{BFL00}
and extensively studied in recent years (see, for instance,
\cite{CLS02, LaSS02, LSS01, LSS02, SLX} and the references therein,
we also quote \cite{EKK15} for applications to elastoplasticity with hysteresis
and \cite{FFRS} for liquid crystals). Other nonisothermal models for
phase-changing fluids can be obtained by linearization around the
critical temperature, which simplifies the mathematical analysis but
gives rise at least to a partial loss of thermodynamical consistency.

A mathematical study of \eqref{incom}-\eqref{calore}
has been first attempted in \cite{ERS,ERS2}, which refer
to the three- and two-dimensional setting, respectively.
Referring to these articles for a physical justification
of the equations and a more comprehensive
survey of the related mathematical literature, here
we just recall that, in \cite{ERS}, existence of solutions was shown
(under slightly different assumptions on coefficients with respect to
those considered here) for a {\sl very weak}\/ formulation, where the
heat equation \eqref{calore} was replaced by a differential equality
accounting for the balance of total energy, complemented with
an ``entropy production'' differential inequality. This approach follows an idea
originally devised in \cite{BFM} for heat conducting fluids
and later used in other contexts (see, e.g., \cite{FFRS,FRSZ} and \cite{RR}
for applications to nematic liquid crystals and damaging models,
respectively). Indeed, in view of the upper regularity threshold for the
3D Navier-Stokes system, the quadratic terms on the \rhs\ of \eqref{calore}
(and particularly the one depending on $\nabla \vu$) may be only estimated in
$L^1$, which gives rise to defect measures when taking the limit
in an approximation scheme. Actually, the method of \cite{BFM}
permits to overcome this difficulty, but
at the price of dealing with a weaker concept of solution.

Later, the two-dimensional case was analyzed in \cite{ERS2}
where stronger results were obtained. Actually, in 2D
one can deduce additional estimates and, in particular,
the quadratic terms in \eqref{calore} can be controlled in
$L^2$ (though the procedure to get such a bound is not trivial).
Using that information the authors of \cite{ERS2} could obtain
existence of ``strong solutions'' satisfying the equations
\eqref{incom}-\eqref{calore} with the initial and boundary conditions
in the usual (distributional) sense (hence avoiding the occurrence
of differential inequalities). On the other hand,
the results of \cite{ERS2} leave several open questions
which we would like to answer here. In particular,
taking essentially the same assumptions on coefficients and data
considered there (here we only need to specify in a more precise
way the hypotheses on the initial temperature),
we will extend and complement the results of \cite{ERS2}
in the following three directions:
\begin{itemize}
 \item We will improve the results on regularity, defining a class of slightly
 smoother solutions (called {\sl ``stable solutions''} in the sequel);
 \item We will prove that for stable solutions uniqueness holds (hence
 we have well-posedness in this class);
 \item We will characterize the long-time behavior of stable solutions
 showing that they constitute a strongly continuous dynamical process which
 admits the global attractor.
\end{itemize}
\noindent
We refer the reader to the next Section~\ref{sec:motiv} for a
rigorous motivation and a detailed explanation of the aspects under
which our result improve and complement those given in \cite{ERS2}.
In particular, the terminology ``stable'' solutions will be clarified
there. Here, we conclude the introduction with the plan of the rest of the paper:
Section~\ref{sec:main} is devoted to presenting
the precise statements of our results, whose proofs are split
into several parts. Namely, in Section~\ref{sec:rego} further regularity
on finite time intervals is discussed; in
Section~\ref{sec:uniq} uniqueness is proved for ``stable solutions'';
finally, in Section~\ref{quattro} the long-time behavior of trajectories
is characterized and the existence of non-empty $\omega$-limit sets
is shown. Moreover, the global attractor in the sense of infinite-dimensional
dynamical systems is proved to exist.


\section{Assumptions and motivation}
\label{sec:motiv}

We start by introducing some notation.
Recalling that $\Omega=[0,1]\times[0,1]$,
we denote as $H:=L^2_{\per}(\Omega)$ the space of
functions in $L^2(\RR^2)$ which are $\Omega$-periodic
(i.e., $1$-periodic both in $x_1$ and in $x_2$). Analogously,
we set $V:=H^1_{\per}(\Omega)$. The spaces $H$ and $V$
are endowed with the norms of $L^2(\Omega)$
and $H^1(\Omega)$, respectively. For brevity, the norm
in $H$ will be simply indicated by $\| \cdot \|$.
We will note by $\| \cdot \|_X$ the norm
in the generic Banach space $X$. The symbol
$\duav{\cdot,\cdot}$ will indicate the duality
between $V'$ and $V$ and $(\cdot,\cdot)$
will stand for the scalar product of~$H$.
We also write $L^p(\Omega)$ in place of $L^p_{\per}(\Omega)$,
and the same for other spaces; indeed, no confusion should
arise since periodic boundary conditions are assumed
to hold for all unknowns. We denote by $H^m_{\rm per}(\Omega)$ (or for
brevity simply $H^m(\Omega)$) the space of functions which are $H^m_{\rm loc}(\mathbb{R}^2)$
and $\Omega$-periodic. For $m \in \mathbb{N}$ they are introduced by means of the corresponding
Fourier series and then they can be extended for general $m \in \mathbb{R}$, $m \ge 0$.
In particular, for $m = 0$ we have $H^0_{\rm per}(\Omega) = L^2_{\rm per}(\Omega)$.

For any function $v\in H$, we will set
\begin{equation}\label{voo}
  v\OO:= \frac1{|\Omega|} \io v = \io v,
\end{equation}
to indicate the spatial mean of $v$. If the integral is replaced with
a duality, the above can be extended to $v\in V'$.
The symbols $V_0$, $H_0$ and $V_0'$ denote the subspaces
of $V$, $H$ and, respectively, $V'$ containing the function(al)s
having zero spatial mean. Then, the distributional operator $(-\Delta)$ is
invertible if seen as a mapping from $V_0$ to $V_0'$ and its
inverse will be indicated by $\calN$.

Still for brevity, we use the
same notation for indicating vector-valued (or tensor-valued)
function spaces and related norms. For instance,
writing $\vu \in H$, we will in fact mean
$\vu\in L^2_{\per}(\Omega)^2$. Also the incompressibility
constraint \eqref{incom} will not be emphasized in the
notation for functional spaces, unless on occurrence: in that case we set
\[
  \mathbb{V} := \{\mathbf{v} \in V_0:
   \dive \mathbf{v} = 0\}, \quad \qquad \mathbb{H} := \{\mathbf{v} \in H_0:~\dive \mathbf{v} = 0\}.
\]
Otherwise, for instance the notation $\vu \in H$
will also implicitly subsume that $\dive \vu = 0$ in the
sense of distributions.
These simplifications will
allow us to shorten a bit some formulas.

Moreover, in the following we will frequently use the following
2D interpolation inequalities:
\begin{align}\label{dis:L4}
  & \| v \|_{L^4(\Omega)}
   \le c \| v \|_{V}^{1/2} \| v \|^{1/2},\\
 \label{dis:Linfty}
  & \| v \|_{L^\infty(\Omega)}
   \le c \| v \|_{H^2(\Omega)}^{1/2} \| v \|^{1/2},\\
	\label{dis:Brezis}
  & \| v \|_{L^r(\Omega)}
   \le c \| v \|^{\frac{2}{r}} \| v \|_{V}^{1 - \frac{2}{r}}, \qquad \qquad \! r \in [1, \infty), \\
	\label{dis:BrezziGilardi}
  & \| v \|_{H^s(\Omega)}
   \le c \| v \|_{H^{s_1}(\Omega)}^{1 - \theta} \| v \|_{H^{s_2}(\Omega)}^{\theta},
	\qquad \displaystyle \theta = \frac{s - s_1}{s_2 - s_1},\\
\label{Friedr}
& \|v-v_\Omega\|\leq \, c \|\nabla v\|,
\end{align}
holding for any sufficiently smooth function $v$ and for
suitable embedding constants, all denoted by
the same symbol $c>0$ for brevity.

We will also use the following nonlinear version of the Poincar\'e
inequality (see \cite{GMRS})
\begin{equation}\label{poinc}
  \| v^{p/2} \|_V^2
   \le c_p \big( \| v \|_{L^1(\Omega)}^p
    + \| \nabla v^{p/2} \|^2 \big),
\end{equation}
holding for all nonnegative $v\in L^1(\Omega)$ such that
$\nabla v^{p/2}\in L^2(\Omega)$, and for all $p\in [2, \infty)$.
We also recall that
\begin{equation}\label{inter-gen}
  \| v \| \le c \| \nabla v \|^{1/2} \| v \|_{V'}^{1/2}
   \quext{for all }\, v\in V_0,
\end{equation}
as one can prove simply combining the standard interpolation
inequality $\|v\| \le c \| v \|_V^{1/2} \| v \|_{V'}^{1/2}$
with the Poincar\'e-Wirtinger inequality \eqref{Friedr}.

With the above notation at disposal, we can present
our main assumptions on the nonlinear terms. Basically,
these assumptions will be retained for all our results. They also essentially
coincide with the hypotheses considered in \cite{ERS2} (differences
will be observed on occurrence). First of all, we ask the
configuration potential~$F$ to satisfy:
\begin{align}\label{hp:F1}\tag{F1}
  & F\in C^3(\RR;\RR), \quad
   \liminf_{|r|\to \infty} \frac{F(r)}{|r|} > 0,\\
 \label{hp:F2}\tag{F2}
   & F''(r) \ge - \lambda
    \quext{for some }\,\lambda \ge 0
    \quext{and all }\, r\in \RR,\\
 \label{hp:F4}\tag{F3}
   & | F'''(r) | \le \tilde{c}_F \big( 1 + |r|^{p_F - 1} \big)
    \quext{for some }\, \tilde{c}_F \ge 0,~p_F \ge 1,
    \quext{and all }\, r\in \RR.
\end{align}
We remark that \eqref{hp:F4} implies
\begin{equation} \label{hp:F3}
 | F''(r) | \le c_F \big( 1 + |r|^{p_F} \big)
    \quext{for some }\,c_F \ge 0,~p_F \ge 0,
    \quext{and all }\, r\in \RR.
\end{equation}
Assumption~\eqref{hp:F1} postulates regularity and coercivity
of $F$, \eqref{hp:F2} is $\lambda$-convexity, and \eqref{hp:F4}
prescribes a polynomial growth at infinity. Note that \eqref{hp:F1}
implies that
\begin{equation}
\label{czero}
  F(s) \ge - c_0 \qquad \forall s \in \mathbb{R}
\end{equation}
and some constant $c_0 > 0$. Observe also that in \cite{ERS2}
it was just assumed that $F\in C^2$ (in place of $C^3$) in \eqref{hp:F1};
moreover \eqref{hp:F3} was taken in place of \eqref{hp:F4}.
Here we are asking more regularity because we will look
for smoother solutions.

Next, we assume the heat conductivity to be given
(exactly as in \cite{ERS2}) by
\begin{equation}\label{hp:kappa}\tag{K1}
  \kappa(r) = 1 + r^q, \quad
    q \in [2,\infty), \quad
    r \ge 0.
\end{equation}
Correspondingly, we define
\begin{equation}\label{defiK}
  K(r) := \int_0^r \kappa(s)\,\dis
     = r + \frac{1}{q+1} r^{q+1}, \quad
     r \ge 0.
\end{equation}
We then observe that, for some $k_q > 0$,
\begin{equation}\label{K11}
  \io \kappa(\vt)^2 | \nabla \vt |^2
   = \| \nabla K(\vt) \|^2
   \ge \| \nabla \vt \|^2 + k_q \| \nabla \vt^{q+1} \|^2.
\end{equation}
Let us now discuss the conditions on initial data.
Here we need to give some more words of explanation in view of the
fact that different assumptions on data automatically
generate different concepts of solutions.

To start, we observe that, in view of the periodic boundary conditions and
of the absence of external forces, some physical quantities are necessarily
{\sl conserved}\/ during the evolution. Namely,
any (reasonably defined) solution to the system must satisfy
\begin{equation} \label{cons_medie_energia}
   \vu(t)\OO = \vu(0)\OO, \quad \fhi(t)\OO = \fhi(0)\OO,
    \qquad \mathcal E(\vu(t),\fhi(t),\teta(t))=\mathcal E(\vu(0),\fhi(0),\teta(0)).
\end{equation}
This corresponds to conservation of momentum, of mass,
and of the ``total energy'' $\calE$ defined as
$$
  \mathcal E(\vu,\fhi,\teta)=\io \left( \dfrac12 |\vu|^2+\dfrac12 |\nabla\fhi|^2+F(\fhi)+\teta\right).
$$
Physically speaking, $\calE$ is the sum of the kinetic, interfacial, configuration
and thermal energies. Then, while the first principle of Thermodynamics
yields conservation of $\calE$, the second principle prescribes the production
of entropy, and this fact can be checked directly from the equations simply
by testing \eqref{calore} by $-\teta^{-1}$.
Hence, in order for the initial entropy to be finite, one needs to assume $\teta_0>0$
almost everywhere and $\log\teta_0\in L^1(\Omega)$. This property, together with
the finiteness of the initial energy, leads naturally to define the
``energy-entropy space'' of data as
\begin{equation} \label{spazioen}
   \mathcal H=\big\{ z=(\vu,\fhi,\teta)\in H\times V\times L^1(\Omega):~\dive \vu = 0,~
   \teta>0~\text{a.e.~in $\Omega$},~\log\teta\in L^1(\Omega)\big\}.
\end{equation}
The space $\calH$ is not a Banach space (in view of the occurrence of the
nonlinear logarithm function); nevertheless, it can be endowed with a (complete) metric. Namely, for
$z_i=(\vu_i, \fhi_i, \teta_i) \in \calH$, $i=1,2$, we may set
\begin{equation} \label{distH}
  \dist_{\calH} (z_1,z_2)
    := \|\vu_1-\vu_2\|
     + \|\fhi_1-\fhi_2\|_{V} + \|\teta_1-\teta_2\|_{L^1(\Omega)}
     + \|\log \teta_1 - \log\teta_2 \|_{L^1(\Omega)}.
\end{equation}
Then, what we get from Thermodynamics is that any eventual solution being in $\calH$
at the initial time will remain in $\calH$ in the evolution. Indeed, this regularity
setting corresponds to that of the ``weak solutions'' considered in \cite{ERS}
in the 3D case (in particular, the terminology is consistent with that commonly
used for the Navier-Stokes system). On the other hand, even in 2D,
we expect that for ``weak solutions'' defect measures would appear in
\eqref{calore} due to the fact that the \rhs\ is only controlled in $L^1$.
This is the reason that led the authors of \cite{ERS2}
to postulate additional regularity on the initial data in order
to get a stronger and more satisfactory concept of solution. Indeed, in \cite{ERS2}
the following result was proved:
\bete\label{teo:old}
 Let us assume\/ \eqref{hp:F1} (with $C^2$ in place of $C^3$),
 \eqref{hp:F2}, \eqref{hp:F3} and\/ \eqref{hp:kappa}. Let also $T>0$
 and let $z_0=(\vu_0,\fhi_0,\teta_0)\in \calH$ additionally satisfy
 \begin{equation} \label{hp:ers2}
   z_0 \in V \times H^3(\Omega)\times V.
 \end{equation}
 Moreover, assume that
 \begin{equation} \label{hp:basso}
   \esiste \tetagiu>0 ~~\text{such that }\teta_0(x) \ge \tetagiu~~\text{a.e.~in }\Omega.
 \end{equation}
 Then, there exists at least
 one\/ {\rm ``strong solution''} to the non-isothermal model for two-phase
 fluid flows, namely, one quadruple $(\vu,\fhi,\mu,\vt)$ with
 \begin{align} \label{rego:vu}
    & \vu \in H^1(0,T;H) \cap L^\infty(0,T;V) \cap L^2(0,T;H^2(\Omega)),\\
  \label{rego:fhi}
    & \fhi \in W^{1,\infty}(0,T;V') \cap H^1(0,T;V) \cap L^2(0,T;H^3(\Omega)),\\
   \label{rego:mu}
    & \mu \in H^1(0,T;V') \cap L^\infty(0,T;V) \cap L^2(0,T;H^3(\Omega)),\\
   \label{rego:vt}
    & \vt \in H^1(0,T;V') \cap L^\infty(0,T;L^{q+2}(\Omega)) \cap L^2(0,T;V),
      \quad \vt>0~\hbox{a.e. in }(0,T)\times \Omega,\\
   \label{rego:Kvt}
    & K(\vt) \in L^2(0,T;V),
 \end{align}
 such that the equations of the system\/ \eqref{incom}-\eqref{CH2}
 hold in the sense of distributions as well as almost everywhere
 in~$(0,T)\times\Omega$, while~\eqref{calore} holds in $V'$ for
 a.e.~$t\in(0,T)$. Moreover, the following initial conditions
 hold a.e.~in~$\Omega$:
 \begin{equation} \label{iniz:old}
   \vu|_{t=0} = \vu_0, \quad
   \fhi|_{t=0} = \fhi_0, \quad
   \teta|_{t=0} = \teta_0.
 \end{equation}
\ente
\noindent%
\beos\label{rem:quad}
 In Theorem \ref{teo:old} we have noted the solution as a quadruple $(\vu,\fhi,\mu,\teta)$.
 However, in view of the fact that $\mu$ can be regarded as an auxiliary variable,
 in some situations it will be convenient to ``exclude''
 $\mu$ from the definition and interpret the solution just as a triple $z$, namely setting
 $z := (\vu,\fhi,\teta)$. Indeed, one can easily rewrite the system \eqref{CH1}-\eqref{CH2}
 as a single equation where $\mu$ no longer appears.
 This interpretation is particularly useful when we consider the
 dynamical system associated with (stable) solutions. Indeed, here $z$ is the
 natural variable and the set $\calV^r$ can be seen as a {\sl phase space}\/ for solution trajectories.
 On the other hand, the situation is somehow articulated, because
 we will see that, for determining $\omega$-limit sets, also the limit value of $\mu$ will
 play a specific role.
\eddos
\noindent%
It is worth discussing a bit more the hypotheses
on initial data  considered  in the above theorem.
First of all, \eqref{hp:basso} (i.e., the fact that the initial temperature is
assumed to be uniformly strictly positive) was just taken for simplicity,
whereas in fact a weaker assumption suffices for the proof.
On the contrary, conditions
\eqref{hp:ers2} were essential. These correspond in fact to the regularity
of ``strong solutions'' for the Navier-Stokes system (available in 2D)
and to the so-called ``second energy estimate'' for the Cahn-Hilliard equation.
Correspondingly, an improvement of the regularity of $\teta_0$ is also required.

Conditions \eqref{hp:ers2} give rise to a new functional setting suitable
for ``strong solutions''. Namely, one may define
\begin{equation}\label{defi:V}
  \mathcal V := \big\{ z=(\vu,\fhi,\teta)\in  \mathcal{H}  \cap (V \times H^3(\Omega)\times V) \big\}.
\end{equation}
In this notation, existence was proved in \cite{ERS2} for initial data
$z_0\in \calV$ additionally satisfying \eqref{hp:basso}. On the other hand,
one immediately sees from the statement of Theorem~\ref{teo:old}
that from $z_0\in\calV$ does not seem to follow that $z(t)$
is controlled in $\calV$ uniformly in time.
In other words, strong solutions appear to exhibit some regularity loss,
or, in the terminology of dynamical systems, $\calV$ seems not to be
a good phase space. In addition to that, it is also not clarified
whether property \eqref{hp:basso} is preserved in the evolution.

The main reason for this regularity gap is probably due to the use of limit-case
two dimensional embeddings in the a-priori estimates. In particular,
properties \eqref{rego:vu}, \eqref{rego:mu}  imply that the \rhs\ of \eqref{calore}
lies {\sl exactly}\/ in $L^2(0,T;H)$ and this information
seems not sufficient in order to get
any additional regularity on $\teta$.
In particular, an $L^\infty$-bound is lacking (Moser iterations do
not work for $L^2$ \rhs), which would be crucial in order to manage
some coefficients of \eqref{calore} which grow like powers of $\teta$.
In particular, the possibility to control $\teta$ in the space $V$ uniformly
in time is tied to the a-priori estimate obtained testing \eqref{calore}
by $\teta_t$ (or, more naturally, by $K(\teta)_t$). Without a
previous $L^\infty$-control of $\teta$, this procedure does not seem to
be available.

On the basis of these considerations, in order to avoid the regularity gap
occurring in Theorem~\ref{teo:old}, we decided to consider a concept
of solution which is {\sl slightly}\/ more regular with respect to
``strong solutions''. There are probably several ways to do this;
in our approach we will ``fractionally'' improve the regularity asked
for the initial velocity in such a way to eventually get a control
of the \rhs\ of \eqref{calore} in $L^p$ for some $p$ {\sl strictly}\/
greater than $2$. Hence, for $r\in(0,1/2]$ assigned but otherwise
arbitrary, we introduce the functional class
\begin{equation} \label{spaziofasi}
  \mathcal V^r:=
   \big\{ z=(\vu,\fhi,\teta)\in \calH \cap (H^{r+1}(\Omega)\times H^3(\Omega)\times V):
      K(\vt)\in V,
     ~ 1/\teta\in L^1(\Omega) \big\}.
\end{equation}
Compared to \eqref{hp:ers2}, we have made a number of changes, which it
is worth explaining in some detail. First of all,
we have improved a bit the regularity asked on $\vu_0$; secondly,
we have made precise the positivity condition on the temperature, namely,
we have replaced \eqref{hp:basso} with the much weaker condition $1/\teta\in L^1(\Omega)$.
This property is also more natural in view of the fact that it
is easy to show (see Lemma~\ref{thetaalpha} below) that it is preserved in the time evolution; i.e.,
if it holds for the initial datum, then it keeps holding with time.
Finally, in place of $\vt\in V$ we required $K(\vt)\in V$. Actually, both conditions
are stated in the definition; however, it is easy to check that $K(\vt)\in V$ implies
$\vt\in V$, but the converse is not true because $V$-functions in 2D
are not necessarily bounded in the uniform norm.

In the sequel we will actually
prove that if the initial datum lie in $\calV^r$ then the evolution
takes place in $\calV^r$ for any $t>0$ (and, also, one has a uniform in time
control of the ``magnitude'' of the solution in $\calV$).
In other words there is no regularity loss, which justifies our
choice to denote as ``stable solutions'' those solutions that start
from initial data in $\calV^r$.
Leaving to the next section the detailed presentation
of our main results, here we just recall that the set $\calV^r$
is also a metric space with respect to the distance
\begin{equation} \label{distVr}
  \dist_{\calV^r} (z_1,z_2)
    := \|\vu_1-\vu_2\|_{H^{r+1}(\Omega)}
     + \|\fhi_1-\fhi_2\|_{H^{3}(\Omega)}
     + \|K(\teta_1)-K(\teta_2)\|_{V}
     + \| 1/\teta_1 - 1/\teta_2 \|_{L^1(\Omega)}.
\end{equation}
This distance will be later used in order to properly settle the
(dissipative) dynamical process associated with stable solutions.


\section{Main results}
\label{sec:main}

We are now ready to present our main results. Our first theorem
is devoted to proving well-posedness of system \eqref{incom}-\eqref{calore}
in the space $\calV^r$ on finite time intervals:
\bete\label{teo:main}
 Let us assume\/ \eqref{hp:F1}-\eqref{hp:F4} and \eqref{hp:kappa}. Let also $T>0$.
 Then, given any $z_0\in \calV^r$, there exists {\sl a unique}\/ {\rm stable solution}
 to our problem, namely a quadruple $(\vu,\fhi,\mu,\teta)$
 with the regularity
 \begin{align} \label{rego:vu:new}
    & \vu \in H^1(0,T;H^r(\Omega)) \cap L^\infty(0,T;H^{1+r}(\Omega)) \cap L^2(0,T;H^{2+r}(\Omega)),\\
  \label{rego:fhi:new}
    & \fhi \in W^{1,\infty}(0,T;V') \cap H^1(0,T;V) \cap L^\infty(0,T;H^3(\Omega)),\\
  \label{rego:mu:new}
  & \mu \in H^1(0,T;V') \cap L^\infty(0,T;V) \cap L^2(0,T;H^3(\Omega)), \\
   \label{rego:vt:new}
    & \vt \in H^1(0,T;H) \cap L^\infty(0,T;V) \cap L^2(0,T;H^2(\Omega)),  \quad \vt>0~~\hbox{a.e. in }\,(0,T)\times \Omega,\\
      \label{rego:Kvt:new}
    & K(\vt) \in L^\infty(0,T;V)\cap L^2(0,T;H^2(\Omega)),\\
    \label{rego:1vt:new}
    &    1/\teta\in L^\infty(0,T;L^1(\Omega)),
 \end{align}
 satisfying equations\/ \eqref{incom}-\eqref{calore} a.e.~in $\Omega\times (0,T)$ and complying
 with the initial conditions
 \begin{equation} \label{iniz:new}
   \vu|_{t=0} = \vu_0, \quad
   \fhi|_{t=0} = \fhi_0, \quad
   \teta|_{t=0} = \teta_0
 \end{equation}
 almost everywhere in $\Omega$.
\ente
\noindent
Once we have well-posedness on finite time intervals, we can consider the dynamical
process associated with stable solutions. To this aim we need to introduce a further
(more regular) functional set:
\begin{equation}\label{spaziocpt}
  \mathcal W := \big\{ z=(\vu,\fhi,\teta)\in \calH \cap (H^{2}(\Omega)\times H^4(\Omega)\times H^2(\Omega)):~
   1/\teta\in L^{4}(\Omega),~\nabla(1/\teta)\in L^1(\Omega) \big\}.
\end{equation}
It is a standard matter to verify that $\mathcal W$ is embedded continuously
and compactly into $\calV^r$. As above, $\calW$ is a
(complete) metric space with the distance
\begin{align} \nonumber
  & \dist_{\calW} (z_1,z_2)
    := \|\vu_1-\vu_2\|_{H^{2}(\Omega)}
     + \|\fhi_1-\fhi_2\|_{H^{4}(\Omega)}
    + \|\teta_1-\teta_2\|_{H^2(\Omega)} \\
 \label{distW}
 & \mbox{}~~~~~
    + \| 1/\teta_1 - 1/\teta_2 \|_{L^4(\Omega)}
    + \| \nabla(1/\teta_1) - \nabla(1/\teta_2) \|_{L^1(\Omega)}.
\end{align}
In the sequel, in order to estimate the ``magnitude'' of the elements of
$\calW$ we will often write
\begin{equation} \label{norW}
  \|z\|_{\mathcal W} := \|\vu\|_{H^2(\Omega)}
   + \|\fhi\|_{H^4(\Omega)}
   + \|\teta\|_{H^2(\Omega)}
   + \|1/\teta\|_{L^{4}(\Omega)}
   + \|\nabla (1/{\teta})\|_{L^1(\Omega)}.
\end{equation}
This is of course somehow an abuse of notation because the above is not a norm.
Mimicking \eqref{norW}, we will also write
\begin{equation} \label{norH}
  \|z\|_{\mathcal H} := \|\vu\|
   + \|\fhi\|_{V}
   + \|\teta\|_{L^1(\Omega)}
   + \| \log \teta \|_{L^1(\Omega)},
\end{equation}
as well as $\|z\|_{\mathcal V}$,
$\|z\|_{\mathcal V^r}$, etc., with obvious corresponding notation.
\beos\label{rem:Moser}
 One may wonder where does the $L^4$-regularity in \eqref{distW} (and \eqref{norW})
 come out. Actually, this is somehow an arbitrary choice (taken just for simplicity),
 as it is also essentially arbitrary the $L^1$-condition $1/\teta$ in the definition of
 $\calV^r$. Indeed, at the price of technicalities, we expect that one
 could prove that, in general, if $\teta_0>0$ a.e.~in $\Omega$ and there exists
 (an arbitrarily small) $\alpha>0$ such that $\teta_0^{-\alpha}\in L^1(\Omega)$,
 then it turns out that $\teta^{-1}(t)\in L^\infty(\Omega)$ for any $t>0$.
 In other words, the temperature should become strictly positive instantaneously
 in the uniform norm. The more natural way to prove this form of the minimum principle argument
 would exploit Lemma~\ref{thetaalpha} below together with a Moser iteration procedure.
 However, the argument may involve quite a relevant amount of technicalities
 (cf., e.g., \cite{SSZ} for a similar situation) and we omit it because
 it is not essential for our purposes.
\eddos
\noindent%
Of course, in order to analyze the long-time behavior of solutions we
need to consider the natural problem constraints, therefore we set
\begin{defin}\label{spaceLambda}
 Given $m,M\in\RR$ and $\bm\in \RR^2$,
 we introduce the spaces $\mathcal{H}_{\bm, m, M}$,
 $\mathcal{V}_{\bm, m, M}$, $\mathcal{V}^r_{\bm, m, M}$,
 $\mathcal{W}_{\bm, m, M}$ of triplets $z = (\vu, \varphi, \vt)$
 respectively in $\mathcal{H}$, $\mathcal{V}$, $\mathcal{V}^r$,
 $\mathcal{W}$, subject to the constraints
 \begin{align}\label{vincolo1}
   & \vu\OO = \bm, \\
  \label{vincolo2}
   & \fhi\OO = m, \\
  \label{vincolo3}
   & \frac12 \| \vu \|^2
    + \frac12 \| \nabla \fhi \|^2
    + \io \big( F(\fhi) + \vt \big) = M.
 \end{align}
 and endowed with the corresponding distances introduced above.
\end{defin}
\noindent%
Given any triplet $(\bm, m, M)$, if $z_0=(\vu_0,\fhi_0,\teta_0)\in \mathcal{V}^r_{\bm, m, M}$,
then by the conservation properties \eqref{cons_medie_energia} and the regularity provided by
Theorem~\ref{teo:main}, the evolution of the system operates as a trajectory in the
space $\mathcal{V}_{\mathbf{m}, m, M}$. Hence, we can introduce the solution operator
\begin{align*}
  & S(t): \mathcal{V}^r_{\bm, m, M} \rightarrow \mathcal{V}^r_{\bm, m, M}, \qquad t \ge 0,\\[1mm]
  & z_0 \mapsto S(t)z_0 = (\vu(t), \varphi(t), \vt(t)).
\end{align*}
\beos\label{rem:semig}
 One can easily prove that
 $S$ is a semigroup on $\mathcal{V}_{\bm, m, M}^r$, i.e.,
 \begin{align*}
   & S(0) = {\rm Id},\\[1mm]
   & S(t + s) = S(t)S(s), \quad \forall s,t \ge 0.
 \end{align*}
 More precisely, using interpolation it is not difficult to deduce from \eqref{rego:vu:new}-\eqref{rego:1vt:new} that
 $S$ is a continuous semigroup. Namely, trajectories are continuous with values in $\calV^r$, moreover,
 \[
   S(t): \mathcal{V}_{\mathbf{m}, m, M}^r \to \mathcal{V}_{\mathbf{m}, m, M}^r
 \]
 is a continuous mapping for every $t\ge 0$.
\eddos
\noindent%
Section~\ref{quattro} will be devoted to the study of the asymptotic behaviour of our model system.
In particular, we will prove existence of nonempty $\omega$-limit sets of trajectories
in the case when the spatial mean of the initial velocity is $\bzero$.
In particular, this will permit us to prove our main result regarding the
long-time behavior of the system, i.e., existence of the
global attractor for the dynamical process associated to stable solutions.

Before stating it, we need however to introduce a further subclass of solutions. Namely,
for given $R\in\RR$, we set $\calV^{r,R}:=\{z\in \calV^r:~( - \log \vt)\OO \le R\}$
as well as the subclasses $\calV^{r,R}_{\bm,m,M}$, $\calV^{r,R}_{\bzero,m,M}$.
The space $\calV^{r,R}$ can be easily proved to be a closed (hence complete)
metric subspace of $\calV^r$. Observing that for the present model the entropy
density is given by $\log\vt$, $\calV^{r,R}$ turns out to contain
the configurations for which the global entropy 
is greater or equal than $-R$. Referring to the monographs
\cite{Hale,Rob,SY,Temam} for the basic concepts and definitions
from the theory of infinite-dimensional dynamical systems,
we can prove existence of the global attractor for solutions taking values in
the phase space $\calV^{r,R}_{\bzero,m,M}$.
\bete \label{teo:att}
 Assume that\/ \eqref{hp:F1}-\eqref{hp:F4} and\/ \eqref{hp:kappa} hold true.
 Let also $m,M,R\in \mathbb R$. Then, the space $\calV^{r,R}_{\bzero,m,M}$ is
 positively invariant for the semigroup $S(t)$. Moreover, there exists
 the global attractor $\calA=\calA^R(\mathbf{0}, m,M)$.
 Namely, $\calA$ is a compact subset of $\calV^{r,R}_{\mathbf 0, m,M}$ and
 is completely invariant for the flow $S(t)$ (i.e., $S(t)\calA = \calA$
 for all $t\ge 0$). Moreover, $\calA$ uniformly attracts the trajectory
 bundles starting from any bounded set $B\subset \calV^{r,R}_{\mathbf 0, m,M}$,
 i.e., we have
 \begin{equation} \label{attr}
   \lim_{t\nearrow \infty}
    \Dist_{\calV^r}(S(t)B,\calA) = 0,
 \end{equation}
 where $\Dist_{\calV^r}$ denotes the\/ Hausdorff semidistance associated
 to the metric of $\calV^r$, namely, if $E,F$ are subsets of $\calV^r$,
 we have set
 \[
   \Dist_{\calV^r}(E,F):=\sup_{e\in E} \inf_{f\in F}
    \dist_{\calV^r}(e,f).
 \]
\ente
\noindent%
The reason for restricting ourselves to the case when the spatial mean of the initial velocity is
$\mathbf{0}$ stands in the fact that, for $\bm \neq \mathbf{0}$, solution trajectories
asymptotically tend to rotate around the flat torus with constant speed $\bm$, which makes
the long-time analysis more difficult. We will give more details
in Section~\ref{quattro} where in particular we will see that a description of the
long-time behavior of trajectories in the case $\bm\neq\bzero$ can be provided by
means of a suitable change of variables. In the proof we will also explain
the reason why we need to restrict ourselves to the subclass
$\calV^{r,R}_{\bzero,m,M}$, namely, why we need to impose a lower bound to
the spatial mean of the entropy.


\section{Proof of Theorem \ref{teo:main} part I: existence of solutions with additional regularity}
\label{sec:rego}

The proof will be carried out by performing a number of {\sl formal}\/ a-priori
estimates holding for any hypothetical solution to our system. These estimates
could be made rigorous by adapting them to the regularization argument
given in~\cite{ERS2}. In the sequel we shall denote by $c$ a generic
positive constant depending only on the assigned parameters of
the system (but independent of the time variable).
In an expression like $Q(a,b,\dots)$, $Q$ will denote a generic (computable)
positive function increasingly monotone in each of its arguments.
For instance, $Q(\| z_0 \|_{\calV^r})$ represents a computable quantity
depending only of the fixed parameters of the system, increasingly
depending on the $\calV^r$-``magnitude'' of the initial datum,
and independent of the time variable.


\subsection{Preliminaries and technical results}

We start by presenting a technical lemma that provides an estimate of negative
powers of the temperature on time intervals of arbitrary length.
\begin{lemm}
\label{thetaalpha}
Let $z=(\vu,\fhi,\teta)$ be a solution to our system defined over the
generic time interval $(S,T)$ with $0\le S < T \le +\infty$
and satisfying, for some $\alpha>1$,
\begin{equation} \label{nablate}
  N:= \| \nabla \teta \|_{L^2(S,T;H)} < \infty, \qquad\quad
   A:= \| \teta^{1-\alpha}(S) \|_{L^1(\Omega)} < \infty.
\end{equation}
Then, we have
\[
  \teta^{1 - \alpha} \in L^{\infty}(S,T; L^1(\Omega)),
   \qquad \nabla (\teta^{\frac{1 - \alpha}{2}}) \in L^2(S,T; L^2(\Omega)),		
\]
with the quantitative estimate
\begin{equation} \label{quantimin}
  \| \teta^{1 - \alpha} \|_{ L^\infty (S,T; L^1(\Omega)) }
   + \| \nabla (\teta^{\frac{1 - \alpha}{2}} ) \|_{L^2(S,T; H)}
   \le Q(N,A).
\end{equation}
\end{lemm}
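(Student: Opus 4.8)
The plan is to obtain the estimate by testing the temperature equation \eqref{calore} with a suitable negative power of $\teta$ and carefully bookkeeping the signs of all the resulting terms. Concretely, I would multiply \eqref{calore} by $\teta^{-\alpha}$ (formally, after justifying by truncating $\teta$ from below by a parameter that is sent to zero at the end) and integrate over $\Omega$. The time-derivative term $\io \teta_t \teta^{-\alpha}$ produces, up to the factor $\frac{1}{1-\alpha}<0$, the time derivative of $\io \teta^{1-\alpha}$; the diffusion term $-\io \dive(\kappa(\teta)\nabla\teta)\,\teta^{-\alpha}$ integrates by parts (no boundary contribution, periodicity) to $\alpha\io \kappa(\teta)\teta^{-\alpha-1}|\nabla\teta|^2$, which dominates $c\,\alpha\io \teta^{-\alpha-1}|\nabla\teta|^2 = c'\,\|\nabla(\teta^{\frac{1-\alpha}{2}})\|^2$ by \eqref{hp:kappa} (recall $\kappa\ge 1$). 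The crucial structural point is that the ``dangerous'' quadratic right-hand side $|\nabla\vu|^2+|\nabla\mu|^2$, after multiplication by the \emph{positive} quantity $\teta^{-\alpha}$, contributes a term of favorable sign on the side where we need it — i.e., since $1-\alpha<0$, rewriting everything so that $\ddt\io\teta^{1-\alpha}$ appears with a positive coefficient, that quadratic term and the diffusion term both help, whereas the problematic terms are the ones coming from the transport/latent-heat group $\vt(\fhi_t+\vu\cdot\nabla\fhi)$ and the convective term $\vu\cdot\nabla\vt$.

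Next I would handle those remaining terms. The convective term $\io (\vu\cdot\nabla\vt)\teta^{-\alpha}$ equals $\frac{1}{1-\alpha}\io \vu\cdot\nabla(\teta^{1-\alpha})$, which vanishes after integration by parts because $\dive\vu=0$ (periodic boundary conditions). The latent-heat term $\io \vt(\fhi_t+\vu\cdot\nabla\fhi)\teta^{-\alpha} = \io (\fhi_t+\vu\cdot\nabla\fhi)\teta^{1-\alpha}$; using \eqref{CH1} this is $\io \Delta\mu\,\teta^{1-\alpha}$, which after integrating by parts becomes $(1-\alpha)\io \nabla\mu\cdot\nabla\teta\,\teta^{-\alpha}$. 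This is estimated by Young's inequality against a small fraction of $\io\teta^{-\alpha-1}|\nabla\teta|^2$ (absorbed into the diffusion term) plus $c\io |\nabla\mu|^2\teta^{1-\alpha}$ — and this last term, because $1-\alpha<0$ and $\teta^{-\alpha}$ is bounded above... no: here is the subtlety, $\teta^{1-\alpha}$ need not be bounded. However $\teta^{1-\alpha}\le 1+\teta$ is false; instead one uses $1-\alpha\in(-\infty,0)$... Actually the clean route is: keep $\io\Delta\mu\,\teta^{1-\alpha}$ and integrate by parts the \emph{other} way, or simply bound $\io|\nabla\mu|^2\teta^{1-\alpha}$ using that $\teta^{1-\alpha}\le C_\eps\teta^{-\alpha-1}+\eps$ pointwise for $\teta$ small, giving an absorbable term plus $\eps\|\nabla\mu\|^2$; the quantity $\|\nabla\mu\|_{L^2(S,T;H)}$ is \emph{not} assumed finite, so this is not quite legitimate either. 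The correct observation is that the hypothesis provides $N=\|\nabla\teta\|_{L^2(S,T;H)}<\infty$, so one should split $\io\nabla\mu\cdot\nabla\teta\,\teta^{-\alpha}$ by Young as $\eps\io|\nabla\mu|^2\teta^{-2\alpha}\cdot(\text{something})$ — but in fact I expect the intended argument is simpler: one tests not by $\teta^{-\alpha}$ but exploits that $\fhi_t+\vu\cdot\nabla\fhi=\Delta\mu$ is already controlled, and more importantly one may instead \emph{drop} the favorable quadratic term entirely and treat the latent-heat group via the already-available bound on $\|\Delta\mu\|$ coming from the strong-solution regularity — but the Lemma is stated for a generic solution with only \eqref{nablate}, so this cannot be assumed.

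Given this tension, I expect \textbf{the main obstacle} to be precisely the estimate of the latent-heat term $\io \vt(\fhi_t+\vu\cdot\nabla\fhi)\teta^{-\alpha}$ without access to higher regularity of $\mu$ or $\vu$: the cleanest resolution is to use \eqref{CH1}–\eqref{CH2} to write $\fhi_t+\vu\cdot\nabla\fhi=\Delta\mu$ and then integrate by parts to land on $\io\nabla\mu\cdot\nabla(\teta^{1-\alpha})$, split it so that a small multiple of the Dirichlet-form term $\|\nabla(\teta^{\frac{1-\alpha}{2}})\|^2$ is absorbed and the complementary factor is $\io|\nabla\mu|^2\teta^{1-\alpha-(\text{exponent})}$; the point is that this complementary power of $\teta$ is a \emph{nonpositive} power ($\le 0$), hence bounded by $\teta^{-\alpha-1}+1$, which again feeds back into absorption plus a harmless $L^1$-in-space term. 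After collecting, one arrives at a differential inequality of the form $\ddt y + c\,\|\nabla(\teta^{\frac{1-\alpha}{2}})\|^2 \le c(1 + \|\nabla\teta\|^2)\,(1+y)$ for $y(t):=\io\teta^{1-\alpha}(t)$ (the $\|\nabla\teta\|^2$ entering from the Young splittings), so Gronwall's lemma with the integrable weight $1+\|\nabla\teta\|^2\in L^1(S,T)$ by \eqref{nablate} yields $y\in L^\infty(S,T)$ with bound $Q(N,A)$, and a subsequent time integration of the inequality gives $\|\nabla(\teta^{\frac{1-\alpha}{2}})\|_{L^2(S,T;H)}\le Q(N,A)$. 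Finally I would remark that the whole computation is formal but is made rigorous exactly as in \cite{ERS2}, by performing it on the regularized problem where the truncation of $\teta$ renders all integrals finite, and passing to the limit using Fatou's lemma on the two left-hand terms.
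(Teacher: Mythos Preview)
Your overall strategy---test \eqref{calore} by $-\vt^{-\alpha}$, note that the convective term vanishes by incompressibility, that the diffusion produces the Dirichlet form $\|\nabla(\vt^{(1-\alpha)/2})\|^2$, and that the quadratic source $|\nabla\vu|^2+|\nabla\mu|^2$ contributes with the good sign---is exactly the paper's approach. The gap is in your treatment of the latent-heat term. After writing it as $(1-\alpha)\io \vt^{-\alpha}\nabla\mu\cdot\nabla\vt$, your Young split places $\vt^{-\alpha-1}$ on $|\nabla\vt|^2$ and $\vt^{1-\alpha}$ on $|\nabla\mu|^2$. You then bound $\vt^{1-\alpha}\le\vt^{-\alpha-1}+1$ pointwise and claim the two pieces ``feed back into absorption plus a harmless $L^1$-in-space term''. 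Neither piece is actually controllable: the term $c_\eps\io\vt^{-\alpha-1}|\nabla\mu|^2$ cannot be absorbed because the only favorable $|\nabla\mu|^2$ term carries the \emph{smaller} weight $\vt^{-\alpha}$ (and we have no lower bound on $\vt$); and the residual $c_\eps\|\nabla\mu\|^2$ is not in $L^1(S,T)$ by hypothesis---\eqref{nablate} gives you only $\|\nabla\vt\|_{L^2(S,T;H)}$, nothing about $\nabla\mu$. So the differential inequality you write down does not follow from your argument.

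The fix is a different Young split: write $\vt^{-\alpha}\nabla\mu\cdot\nabla\vt=(\vt^{-\alpha/2}\nabla\mu)\cdot(\vt^{-\alpha/2}\nabla\vt)$ and apply Young with constant~$\tfrac12$ to get
\[
 (1-\alpha)\io\vt^{-\alpha}\nabla\mu\cdot\nabla\vt
  \ \ge\ -\tfrac12\io\vt^{-\alpha}|\nabla\mu|^2
        -\tfrac{(1-\alpha)^2}{2}\io\vt^{-\alpha}|\nabla\vt|^2.
\]
The first piece is absorbed \emph{exactly} by the good term $\io\vt^{-\alpha}|\nabla\mu|^2$ coming from the right-hand side of \eqref{calore} (leaving $\tfrac12$ of it to spare). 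The second piece has the ``wrong'' power $\vt^{-\alpha}$, but now it carries $|\nabla\vt|^2$, so the pointwise interpolation $\vt^{-\alpha}\le\delta\vt^{-\alpha-1}+c_\delta$ splits it into a part absorbed by the diffusion term and a part controlled by $N^2=\|\nabla\vt\|_{L^2(S,T;H)}^2$. The outcome is the clean inequality
\[
 \ddt\io\frac{\vt^{1-\alpha}}{\alpha-1}
  + c\,\|\nabla(\vt^{(1-\alpha)/2})\|^2
  \ \le\ c\,\|\nabla\vt\|^2,
\]
with no $y$ on the right and no Gronwall needed: one simply integrates over $(S,t)$ and uses \eqref{nablate}.
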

\begin{proof}
We did not specify in the statement the concept of weak solutions
we are dealing with because, in fact, what we are proving is a structural
property of any solution of the system (even of
very weak ones, provided they comply with the energy conservation
and entropy production principle at least in a regularization).

That said, we multiply \eqref{calore} by $-\teta^{-\alpha}$, for some $\alpha>1$ getting
\begin{align*}
\ddt \io \dfrac{\teta^{1-\alpha}}{\alpha-1}-\io \teta^{1-\alpha}\Delta\mu+\io\Delta[K(\teta)]\teta^{-\alpha}+
\io (|\nabla \vu|^2+|\nabla\mu|^2)\teta^{-\alpha}=0,
\end{align*}
where the third term on the left hand side reads as
$$
\io\Delta[K(\teta)]\teta^{-\alpha}=
\dfrac{4\alpha}{(1-\alpha)^2}\|\nabla(\teta^{\frac{1-\alpha}{2}})\|^2+
\dfrac{4\alpha}{(q+1-\alpha)^2}\|\nabla(\teta^{\frac{1-\alpha+q}{2}})\|^2,
$$
while the second one is
\begin{align*}
-\io \teta^{1-\alpha}\Delta\mu & =
(1-\alpha)\io \teta^{-\alpha}\nabla\teta \nabla\mu\geq
-\dfrac12 \io \teta^{-\alpha}|\nabla\mu|^2-\dfrac{(1-\alpha)^2}2\io\teta^{-\alpha}|\nabla\teta|^2\\
& \geq
-\dfrac12 \io \teta^{-\alpha}|\nabla\mu|^2-\dfrac{2\alpha}{(1-\alpha)^2}
   \io|\nabla(\teta^{\frac{1-\alpha}2})|^2-c\io |\nabla\teta|^2
\end{align*}
having observed that, due to Young's inequality,
$$
\io \teta^{-\alpha}|\nabla\teta|^2\leq \delta \io \teta^{-1-\alpha}|\nabla\teta|^2+c_\delta\io |\nabla\teta|^2,
$$
for any fixed $\delta>0$.
Collecting the above estimates, we are lead to
\begin{align*}
&\ddt \io \dfrac{\teta^{1-\alpha}}{\alpha-1}+
\dfrac{2\alpha}{(1-\alpha)^2}\|\nabla(\teta^{\frac{1-\alpha}{2}})\|^2
+\dfrac{4\alpha}{(q+1-\alpha)^2}\|\nabla(\teta^{\frac{1-\alpha+q}{2}})\|^2\\
 & \mbox{}~~~~~
  + \io (|\nabla \vu|^2+\dfrac12|\nabla\mu|^2)\teta^{-\alpha}\leq c\|\nabla \teta\|^2.\nonumber
\end{align*}
Integrating this inequality over $(S,t)$, $t\in(S,T]$, we then deduce
\begin{equation}\label{alfetta1}
  \frac{1}{\alpha - 1} \io\teta^{1-\alpha}(t)
   +\dfrac{2\alpha}{(1-\alpha)^2}\int_S^t \|\nabla(\teta^{\frac{1-\alpha}{2}})\|^2 \,\dis
    \leq \frac{1}{\alpha - 1} \io\teta^{1-\alpha}(S) + c \int_S^t \|\nabla\teta(s)\|^2 \,\dis.
\end{equation}
Taking the essential supremum as $t$ varies in $(S,T]$, recalling
\eqref{nablate}, we get the assert.
\end{proof}


\subsection{Higher regularity - part I}
\label{part1}

Here we will derive some estimates holding on the assigned bounded interval $(0,T)$ of finite
length, whereas in Section \ref{quattro} we will look for uniform in time estimates.
For this reason in this section we will allow the generic (positive) constants
$C$ to depend on $T$. The value of $C$ may vary on occurrence.

Hence, to start the proof of Theorem~\ref{teo:main}, we take a ``strong solution''
$z=(\vu,\fhi,\mu,\teta)$,
as provided by Theorem \ref{teo:old}, and we will show that, under our
slightly stronger assumptions (particularly on the initial datum
$z_0\in\calV^r$), $z$ is in fact a ``stable solution''.
To this aim, we will start proving the following regularity properties:
\begin{itemize}
 \item[(i)]  $\vu\in L^\infty(0,T;H^{r+1}(\Omega)) \cap L^2(0,T;H^{r+2}(\Omega))$,
 \item[(ii)] $\teta\in H^1(0,T; H) \cap L^\infty(0,T;V)$ and  $K(\teta)\in L^\infty(0,T;V)$,
 \item[(iii)] $\fhi\in L^\infty(0,T;H^3(\Omega))$,
 \item[(iv)] $1/\teta\in L^\infty(0,T;L^1(\Omega))$.
\end{itemize}

\noindent
$\bullet \,$ {\sc proof of} (i). First of all, we observe that
\begin{equation}
\label{rego:2:fhi}
\varphi \in L^{\infty}(0,T; H^2(\Omega)).
\end{equation}
This simply comes from \eqref{CH2} using \eqref{rego:mu}, \eqref{rego:vt} and \eqref{hp:F3} combined with the fact that,
due to \eqref{rego:fhi} $\varphi \in L^{\infty}(0,T; L^p(\Omega))$ for all $p > 1$. This estimate will be used several times later on.

To achieve (i) we need now to introduce the Stokes operator (for more details see for instance \cite[Par. 3.8]{SY} or
\cite[Par. 2.2]{Temam83}) $A: \mathbb{V} \rightarrow \mathbb{V}'$, where $A \vu := \mathbb{P} (- \Delta \vu)$
for $\vu \in D(A) = \{\mathbf{v} \in \mathbb{H}, \Delta \mathbf{v} \in \mathbb{H}\}$ and
$\mathbb{P}$ denotes the so called Leray (or Helmholtz) projection. It can be shown
(cf.~\cite[Theorem 38.6]{SY}) that the operator $A$  is an unbounded, positive, linear, selfadjoint operator on the space $\mathbb{H}$.
Therefore, we can define the powers $A^{s}$, $s \in \mathbb{R}$, with domain $D(A^{s})$ in $\mathbb{H}$. Setting
\[
\mathbb{V}^{s} = D(A^{s/2}),
\]
then $\mathbb{V}^{s}$ is a closed subspace of $H^{s}_0(\Omega)$ (here denoting the subspace of $H^s(\Omega)$
containing the function with zero spatial mean) and indeed
\[
\mathbb{V}^{s} = \{\mathbf{v} \in H^{s}_0(\Omega): \dive \mathbf{v} = 0\}.
\]
In particular $\mathbb{V}^2 = D(A)$, $\mathbb{V}^1 = \mathbb{V}$, $\mathbb{V}^0 = \mathbb{H}$. Moreover $A$ is an
isomorphism from $\mathbb{V}^{s + 2}$ onto $\mathbb{V}^{s}$, from $D(A)$ onto $\mathbb{H}$, from $\mathbb{V}$ onto
$\mathbb{V}'$ and so on. Finally, the norm $\|A^{s/2} \mathbf{v}\|$ on $\mathbb{V}^{s}$ is equivalent to the norm induced by $H^{s}_0(\Omega)$.

The subsequent step will be to test \eqref{ns} by $A^{r+1} \hat\vu$, where $r$ is as in \eqref{spaziofasi} and where
$\hat \vu = \vu - {\vu}_{\Omega}$. Working in divergence-free spaces, the key point will be to project equation \eqref{ns}
into the space $\mathbb{H}$ by applying $\mathbb{P}$ and then to test by $A^{r + 1} \hat\vu$. Let us note that $\mathbb{P} \vu = \vu$
due to \eqref{incom}; moreover also $\mathbb{P} \vu_t = \vu_t$. The pressure term is missing because $\mathbb{P} \nabla p = 0$,
whence  we are left with the following term:
\[
\mathbb{P} (\vu \cdot \nabla \vu) = \mathbb{P}(\vu \cdot \nabla \hat \vu) = \mathbb{P}(\hat \vu \cdot \nabla \hat \vu + {\vu}_{\Omega} \cdot \nabla \hat \vu) = \mathbb{P} (\hat \vu \cdot \nabla \hat \vu).
\]
Next, we set
\begin{equation}
\label{projf}
f := \mathbb{P}(- \dive (\nabla \varphi \otimes \nabla \varphi))
\end{equation}
and define the trilinear form
\[
b(\bu, \mathbf{v}, \mathbf{w}) := \langle (\bu \cdot \nabla) \mathbf{v}, \mathbf{w} \rangle.
\]
As a consequence, we get
\begin{equation}
\label{eq:Ar}
\frac{1}{2} \ddt \|\hat{\vu}\|^2_{\mathbb{V}^{r+1}}
 + \|\hat\bu\|^2_{\mathbb{V}^{r+2}} = - b (\hat\bu, \hat\bu, A^{r+1} \hat\bu) + \langle A^{\frac{r}{2}} f, A^{\frac{r}{2} + 1} \hat \bu \rangle
\end{equation}
and we observe that $f_{\Omega} = 0$. If $r \in (0,1)$ and $f \in \mathbb{V}^r$, then \cite[Part I, Sec. 2.3, formula (2.20)]{Temam83}
\begin{equation}
\label{rego:Temam}
\|A^{\frac{r}{2}} f\| = \|f\|_{\mathbb{V}^r} \le \, \|f\|^{1-r} \|f\|^r_{\mathbb{V}} \le \, \|f\|^{1-r} \|\nabla f\|^r.
\end{equation}
Now, using \eqref{dis:L4}, it is not difficult to show that
\[
   \|f\| \le C \|\nabla^2 \varphi\|_{L^4(\Omega)} \, \|\nabla \varphi\|_{L^4(\Omega)}
  \le C \|\varphi\|^{1/2}_{H^3(\Omega)} \|\varphi\|_{H^2(\Omega)} \|\varphi\|^{1/2}_V \stackrel{\eqref{rego:2:fhi}}{\le} \, C \, \|\varphi\|_{H^3(\Omega)}^{1/2}.
\]
This, together with Poincar\'e's inequality, entails
\begin{align}
   \|\nabla f\|
    & = \left \|\nabla \left (\nabla \varphi \Delta \varphi + \nabla \left (\frac{|\nabla \varphi|^2}{2} \right ) \right ) \right \| \nonumber\\
    & = \left \|\nabla^2 \varphi \Delta \varphi + \nabla \varphi \otimes \nabla \Delta \varphi + \nabla^2 \left (\frac{|\nabla \varphi|^2}{2} \right ) \right \|\nonumber\\
  & \le \left (\|\nabla^2 \varphi \|_{L^4(\Omega)} \|\Delta \varphi\|_{L^4(\Omega)}
      + \|\nabla \varphi\|_{L^{\infty}(\Omega)} \|\nabla \Delta \varphi\|
    + \left \|\nabla^2 \left (\frac{|\nabla \varphi|^2}{2} \right ) \right \|  \right )\nonumber\\
  & \stackrel{\eqref{dis:L4}, \eqref{dis:Linfty}}{\le} C \, \left (\|\varphi\|_{H^2(\Omega)} \|\varphi\|_{H^3(\Omega)}
    + \|\nabla \varphi\|^{1/2} \|\varphi\|^{\frac{3}{2}}_{H^3(\Omega)} + \|\nabla \varphi\|_{L^{\infty}(\Omega)} \|\varphi\|_{H^3(\Omega)}
     \right ) \nonumber\\
   &\stackrel{\eqref{dis:Linfty}, \eqref{rego:2:fhi}}{\le} \, C \, \left (\|\varphi\|^{\frac{3}{2}}_{H^3(\Omega)} + 1 \right ) \label{rego:Temam:2}.
\end{align}
This implies, going back to \eqref{rego:Temam},
\begin{equation}
\label{stimadifVr}
\|f\|_{\mathbb{V}^r} \le \, C \, \|\varphi\|^{\frac{1-r}{2}}_{H^3(\Omega)} \left (\|\varphi\|_{H^3(\Omega)}^{\frac{3}{2} r}  + 1 \right ) \le \, C \, \left (\|\varphi\|_{H^3(\Omega)}^{\frac{1 + 2 r}{2}} + 1 \right )
\end{equation}
and in turn
\[
\langle A^{\frac{r}{2}} f, A^{\frac{r}{2} + 1} \hat \bu \rangle 
\le C \, \|f\|_{\mathbb{V}^r} \|\hat \bu\|_{\mathbb{V}^{r+2}}  
\le \frac{1}{4} \|\hat \bu\|^2_{\mathbb{V}^{r+2}} + C \, \|f\|^2_{\mathbb{V}^r} 
  \le  \frac{1}{4} \|\hat \bu\|^2_{\mathbb{V}^{r + 2}} + C \left (\|\varphi\|_{H^3(\Omega)}^{2 r + 1} + 1 \right ).
\]
Next, we have
\begin{equation}
\label{bhatu}
   b(\hat \bu, \hat \bu, A^{r+1} \hat \bu) = \langle \hat \bu \cdot \nabla \hat \bu, A^{r+1} \hat \bu \rangle
  = \langle A^{\frac{r}{2}}(\hat \bu \cdot \nabla \hat \bu), A^{\frac{r}{2} + 1} \hat \bu \rangle \le \|\hat \bu \cdot \nabla \hat \bu\|_{\mathbb{V}^r} \, \|\hat \bu\|_{\mathbb{V}^{r+2}}.
\end{equation}
Now, also on account of the Friedrichs inequality \eqref{Friedr},
\begin{align*}
  \|\hat{\bu} \cdot \nabla \hat \bu\|_{\mathbb{V}^r}
    & \le \|\hat{\bu} \cdot \nabla \hat \bu\|^{1-r} \, \|\hat{\bu} \cdot \nabla \hat \bu\|_{\mathbb{V}}^{r} \\
    & \le \, C \, \|\hat \bu\|_{L^{\infty}(\Omega)}^{1-r} \, \|\nabla \hat \bu\|^{1-r} \left (\|\nabla \hat \bu\|_{L^4(\Omega)}^{2r}
    + \| \hat \bu\|_{L^{\infty}(\Omega)}^r \, \|\nabla^2 \hat \bu\|^r \right ) =: V_1 + V_2.
\end{align*}
We have that
\[
  V_1 \stackrel{\eqref{dis:Linfty}}{\le} \, C \|\hat \bu\|^{\frac{1-r}{2}} \|\hat \bu\|_{\mathbb{V}^2}^{\frac{1-r}{2}} \, \|\nabla \hat \bu\|^{1 - r} \, \|\nabla \hat \bu\|^r \, \|\nabla \hat \bu\|^r_{\mathbb{V}}
  \le \, C \, \|\hat \bu\|^{\frac{r+1}{2}}_{\mathbb{V}^2}  \le \, C \, \|\hat \bu\|_{\mathbb{V}^2},
\]
where we used \eqref{rego:vu} and the fact that $0 < r \le 1/2$. Analogously
\[
  V_2 \le \, \|\hat \bu\|_{L^{\infty}(\Omega)} \|\hat \bu\|^{1-r}_{\mathbb{V}} \|\hat \bu\|_{\mathbb{V}^2}^r
     \stackrel{\eqref{dis:Linfty}}{\le} \|\hat \bu\|^{\frac{1}{2} + r}_{\mathbb{V}^2} \, \|\hat \bu\|^{\frac{1}{2}} \, \|\hat \bu\|_{\mathbb{V}}^{1-r}
     \le \, C \, \|\hat \bu\|_{\mathbb{V}^2}^{\frac{1+2r}{2}} \le \, C \, \|\hat \bu\|_{\mathbb{V}^2}.
\]
Summing up, we deduce
\begin{equation} \label{Bone}
  \|\hat{\bu} \cdot \nabla \hat \bu\|_{\mathbb{V}^r} \le  \, C \, \|\hat \bu\|_{\mathbb{V}^2}.
\end{equation}
This fact, using interpolation together with \eqref{bhatu}, entails that
\begin{align*}
  b(\hat \bu, \hat \bu, A^{r+1} \hat \bu) & \le \|\hat \bu \cdot \nabla \hat \bu\|_{\mathbb{V}^r} \, \|\hat \bu\|_{\mathbb{V}^{r+2}}
    \le \, C \, \|\hat \bu\|_{\mathbb{V}^2} \|\hat \bu\|_{\mathbb{V}^{r+2}}\\
    & \le \, C \, \|\hat \bu\|_{\mathbb{V}^{r+1}}^{r} \|\hat \bu\|_{\mathbb{V}^{r+2}}^{2 - r}\le \, \frac{1}{4} \|\hat \bu\|^2_{\mathbb{V}^{r+2}} + C \, \|\hat \bu\|^2_{\mathbb{V}^{r+1}}.
\end{align*}
Coming back to \eqref{eq:Ar}, we finally obtain
\begin{equation}
\label{fineprimaparte}
   \ddt \|\hat{\vu}\|^2_{\mathbb{V}^{r+1}}
 + \|\hat\bu\|^2_{\mathbb{V}^{r+2}}
    \le \, C \, \left (\|\varphi\|_{H^3(\Omega)}^{1 + 2 r} + 1 \right ) + C \, \|\hat \bu\|^2_{\mathbb{V}^{r+1}}
\end{equation}
and the right hand side is summable since $0 < r \le \frac{1}{2}$. Then, (i) follows from
Gronwall's lemma. It is also worth remarking that (i) yields in particular
$\vu\in L^\infty(0,T;L^\infty(\Omega))$.

\smallskip

\noindent $\bullet \,$ {\sc proof of} (ii). We can now address (ii), passing through
the intermediate step
\begin{equation}
\label{rego:vt:step3}
\vt \in {L^{\infty}(0,T; L^p(\Omega))} \qquad \qquad \textnormal{for all $p > 1$.}
\end{equation}
In order to show this property,
we multiply \eqref{calore} by $\vt^p$ and integrate over $\Omega$. We remark that
\[
  \int_{\Omega} ( \vu \cdot \nabla \vt) \vt^p = 0
\]
due to \eqref{incom} and the choice of periodic boundary conditions, whence we have
\begin{align}
 & \frac{1}{p+1} \ddt \int_{\Omega} \vt^{p+1}
  + \frac{4p}{(p+1)^2} \int_{\Omega} \left |\nabla \vt^{\frac{p+1}{2}} \right |^2
  + \frac{4p}{(p+q+1)^2}  \int_{\Omega} \left |\nabla \vt^{\frac{p+q+1}{2}}\right |^2 \label{step1.1}\\
 & \mbox{}~~~~~\le  \int_{\Omega} |\Delta \mu| \vt^{p+1}
  + \int_{\Omega} |\nabla \bu|^2 \vt^p
  + \int_{\Omega} |\nabla \mu|^2 \vt^p. \nonumber
\end{align}
At this point, from \eqref{step1.1} and \eqref{poinc}, we deduce
\begin{align}\nonumber
 & \ddt \int_{\Omega} \vt^{p+1}
  + \frac{4p}{c_p(p+1)}  \left \|\vt^{\frac{p+1}{2}} \right \|^2_V \\
 & \mbox{}~~~~~\le \frac{4p}{(p+1)} \|\vt\|^{p+1}_{L^1(\Omega)}
   + (p+1) \int_{\Omega} |\Delta \mu| \vt^{p+1}
   + (p+1)\int_{\Omega} |\nabla \bu|^2 \vt^p
   + (p+1) \int_{\Omega} |\nabla \mu|^2 \vt^p \nonumber\\
 & \mbox{}~~~~~\le (p+1) \int_{\Omega} (|\Delta \mu| + 1) \vt^{p+1}
   + (p+1) \int_{\Omega} (|\nabla \bu|^2 + |\nabla \mu|^2) \vt^p
   =: I_a + I_b, \label{step1.2}
\end{align}
having observed that $\frac{4p}{p+1} \le p+1$ and where $c_p$ is the Poincar\'e constant in \eqref{poinc}. Next,
\begin{align*}
  I_a &:= (p+1) \int_{\Omega} (|\Delta \mu| + 1) \vt^{p+1}
    = (p+1) \int_{\Omega} \vt^{\frac{p+1}{2}}(|\Delta \mu| + 1) \vt^{\frac{p+1}{2}}\\
  &\le c(p) \left \|\vt^{\frac{p+1}{2}} \right\|_V \left\|\vt^{\frac{p+1}{2}} (|\Delta \mu| + 1)\right\|_{V'}
    \le \frac{2p}{c_p(p+1)} \left\|\vt^{\frac{p+1}{2}}\right\|^2_V + c(p) \left\|\vt^{\frac{p+1}{2}} (|\Delta \mu| + 1) \right\|^2_{L^{6/5}(\Omega)},
\end{align*}
where the exponent $6/5$ is taken just for computational convenience and where from now on $c(p)$ denotes a positive constant depending on $p$, possibly varying from line to line.
Now, applying H\"older's inequality with exponents $\frac{5}{3}$ and $\frac{5}{2}$, we obtain
\[
  \left\|\vt^{\frac{p+1}{2}} (|\Delta \mu| + 1)\right\|^2_{L^{6/5}(\Omega)}
   = \left (\int_{\Omega} \left [\vt^{\frac{p+1}{2}} (|\Delta \mu| + 1)\right]^{\frac{6}{5}} \right )^{\frac{5}{3}}
    \le \||\Delta \mu| + 1\|_{L^3(\Omega)}^2 \left  ( \int_{\Omega} \vt^{p+1} \right ).
\]
Thus we end up with
\begin{equation}
\label{stima:I}
  I_a :=  (p+1) \int_{\Omega} (|\Delta \mu| + 1) \vt^{p+1}
   \le \frac{2p}{c_p (p+1)} \left\|\vt^{\frac{p+1}{2}}\right\|^2_V
    + c(p) \||\Delta \mu| + 1\|_{L^3(\Omega)}^2 \left  ( \int_{\Omega} \vt^{p+1} \right ).
\end{equation}
On the other hand, by H\"older's inequality with exponents $p+1$ and $\frac{p+1}{p}$ we deduce
\begin{align*}
  I_b & := (p+1) \int_{\Omega} (|\nabla \bu|^2 + |\nabla \mu|^2) \vt^p
   \le c(p) \left (\|\nabla \bu\|^2_{L^{2(p+1)}(\Omega)}
      +  \|\nabla \mu\|^2_{L^{2(p+1)}(\Omega)} \right ) \left (\int_{\Omega} \vt^{p+1} + 1 \right )\\
  &\le  c(p) \left (\|\bu\|^2_{H^2(\Omega)} + \|\mu\|^2_{H^2(\Omega)} \right ) \left (\int_{\Omega} \vt^{p+1} + 1 \right ).
\end{align*}
Summing up, we obtain
\[
  \ddt \int_{\Omega} \vt^{p+1}
  \le c(p) \left (\||\Delta \mu| + 1\|^2_{L^3(\Omega)}
        + \|\bu\|^2_{H^2(\Omega)}
        + \|\mu\|^2_{H^2(\Omega)} \right ) \left (\int_{\Omega} \vt^{p+1} + 1 \right ).
\]
Then, the conclusion comes from the Gronwall inequality (see for instance \cite[Lemma 2.8]{Rob}) by
recalling \eqref{rego:mu}, \eqref{rego:vu} and \eqref{hp:basso}.

\smallskip

\noindent%
We can now address the proof of $\teta\in L^\infty(0,T;V)$.
To this aim, we formally multiply \eqref{calore}
by $\partial_t K(\vt) = \kappa(\vt) \vt_t$. We deduce
\begin{align*}
  & \int_{\Omega} |\sqrt{\kappa(\vt)} \vt_t|^2
   + \frac{1}{2} \ddt \int_{\Omega} |\nabla K(\vt)|^2  \\
  & \mbox{}~~~~~= - \int_{\Omega} \bu \cdot \nabla \vt \, \kappa(\vt) \vt_t
     - \int_{\Omega} \vt \Delta\mu \kappa(\vt) \vt_t
     + \int_{\Omega} |\nabla \bu|^2 \kappa(\vt) \vt_t
     + \int_{\Omega} |\nabla \mu|^2 \kappa(\vt) \vt_t\\
   & \mbox{}~~~~~  =: I + II + III + IV.
\end{align*}
First of all, we have
\begin{align*}
  I & := - \int_{\Omega} \bu \cdot \nabla \vt \, \kappa(\vt) \vt_t
   \le \int_{\Omega} |\bu| |\nabla K(\vt)| |\vt_t|
   \le C \|\bu\|_{L^{\infty}(\Omega)} \|\nabla K(\vt)\| \|\vt_t\|\\
  & \le \frac{1}{4} \|\sqrt{\kappa(\vt)} \vt_t\|^2
    + C \|\bu\|^2_{L^{\infty}(\Omega)} \|\nabla K(\vt)\|^2
   \le \frac{1}{4} \|\sqrt{\kappa(\vt)} \vt_t\|^2
    + C \|\nabla K(\vt)\|^2
 %
 %
\end{align*}
where we used the fact that $|\vt_t| \le |\sqrt{\kappa(\vt)} \vt_t|$ because $\kappa(\vt) \ge 1$
and the fact that $\bu$ is bounded in the uniform norm as a consequence of~(i) and of
the continuous embedding $H^{1+r}(\Omega)\subset L^\infty(\Omega)$.

Let us now deal with the term $II$. We have
\begin{align*}
  II &:= - \int_{\Omega} \vt \Delta \mu \kappa(\vt) \vt_t
   \le \int_{\Omega} \vt \sqrt{\kappa(\vt)} |\Delta \mu| \sqrt{\kappa(\vt)} |\vt_t|\\
  &\le \frac{1}{4} \|\sqrt{\kappa(\vt)} \vt_t\|^2
   + C \|\Delta \mu\|^2_{L^4(\Omega)} \|\sqrt{\kappa(\vt)} \vt\|^2_{L^4(\Omega)}\\
  & \stackrel{\eqref{dis:L4}, \eqref{rego:vt:step3}, \eqref{rego:mu}}{\le}
    \frac{1}{4} \|\sqrt{\kappa(\vt)} \vt_t\|^2 + C \|\Delta \mu\|_V^2.
\end{align*}
The most difficult term to be estimated is $III$: indeed,
as remarked above, we are lacking the information
$\vt \in L^{\infty}((0,T) \times \Omega)$
(we only have \eqref{rego:vt:step3} at this level)
and for this reason we need to use the improved regularity on $\vu$
in the following way:
\begin{align*}
  III & := \int_{\Omega} |\nabla \bu|^2 \sqrt{\kappa(\vt)} \sqrt{\kappa(\vt)} \vt_t  \\
  & \le C \|\nabla \bu\|^2_{L^{2s}(\Omega)} \|\sqrt{\kappa(\vt)}\|_{L^{s'}(\Omega)} \|\sqrt{\kappa(\vt)} \vt_t\|\\
 &\le \frac{1}{4} \|\sqrt{\kappa(\vt)}\vt_t\|^2 +
  C \|\nabla \bu\|^4_{L^{2s}(\Omega)},
\end{align*}
for some exponents $s, s' > 2$ such that $\frac{1}{s} + \frac{1}{s'} = \frac{1}{2}$;
we will specify these exponents below. Here we used \eqref{rego:vt:step3}.
At this point we look for a suitable exponent $s$ such that
\[
  \int_0^T \|\nabla \bu(t)\|^4_{L^{2s}(\Omega)} \, \dit \le C.
\]
First of all we apply \eqref{dis:Brezis} with the choices $r := 2s$ and $v := \nabla \bu$. We have
\begin{equation}
\label{stimaL2s}
   \|\nabla \bu\|_{L^{2s}(\Omega)}^4
    \le C  \|\nabla \bu\|^{\frac{4}{s}} \|\nabla \bu\|_{H^1(\Omega)}^{4 \left (1 - \frac{1}{s} \right )}.
\end{equation}
At this point we use \eqref{dis:BrezziGilardi} with the choices $s = 1$, $s_1 = 0$, $s_2 = 1 + r$. We obtain
\[
  \|\nabla \bu\|_{H^1(\Omega)}
   \le C \|\nabla \bu\|^{\frac{r}{r+1}} \|\nabla \bu\|_{H^{r+1}(\Omega)}^{\frac{1}{r+1}}.
\]
Hence, combining the latter estimate with \eqref{stimaL2s}, we deduce
\begin{align*}
  \int_0^T \|\nabla \bu\|_{L^{2s}(\Omega)}^4
   & \le  C \int_0^T \|\nabla \bu\|^{\frac{4}{s}} \|\nabla \bu\|_{H^1(\Omega)}^{4 \left (1 - \frac{1}{s} \right )} \\
  & \le C \int_0^T \|\nabla \bu\|^{\frac{4}{s}} \left [ \|\nabla \bu\|^{\frac{r}{r+1}} \|\nabla \bu\|_{H^{r+1}(\Omega)}^{\frac{1}{r+1}} \right ]^{4 \left (1 - \frac{1}{s} \right )} \\
  & \le C \int_0^T \|\nabla \bu\|^{\frac{4}{s} + \frac{4r(s-1)}{(r+1)s}} \|\nabla \bu\|^{\frac{4 (s-1)}{s(r+1)}}_{H^{r+1}(\Omega)}.
\end{align*}
The first term in the last product is controlled uniformly in time since $\nabla \bu \in L^{\infty}(0,T; L^2(\Omega))$.
Hence the integral is bounded provided (see~(i))
\[
  \frac{4 (s-1)}{s(r+1)} \le 2 \Leftrightarrow s \le \frac{2}{1-r}.
\]
We recall that $0 < r \le 1/2$. So for instance if $r = 1/2$ then we can choose $s = 4$.

Finally, the estimate of the term $IV$ can be done in the same way as we did for the term $III$
(note that $\nabla \mu$ has even more regularity than $\nabla \bu$). We then conclude by the Gronwall
inequality that
\begin{equation}
 \label{rego:vt:new:2}
   \vt \in H^1(0,T; L^2(\Omega)) \qquad \qquad K(\vt) \in L^{\infty}(0,T; V),
\end{equation}
so that (ii) holds true. Note that at this point it is crucial to
assume $K(\vt_0)\in V$ in place of the sole property $\vt_0\in V$ considered
in \cite{ERS2}.

\smallskip

\noindent%
$\bullet \,$ {\sc proof of} (iii).
Now, $\fhi\in L^\infty(0,T;H^3(\Omega))$, that is $\rm (iii)$,
follows easily: indeed, reading \eqref{CH2} as the elliptic equation
$-\Delta\fhi+F'(\fhi)=\teta+\mu\in L^\infty(0,T;V)$,
we readily obtain the desired conclusion.

\smallskip

\noindent%
$\bullet \,$ {\sc proof of} (iv).
This is directly achieved from Lemma \ref{thetaalpha} applied
over the interval $(0,T)$ with $\alpha = 2$. Note that we use
here \eqref{rego:vt} and the assumption $(1/\teta_0) \in L^1(\Omega)$
resulting from the choice of $z_0\in \calV^r$.


\subsection{Higher regularity - part II}
\label{part2}

We will now show that, under the assumptions of Theorem~\ref{teo:main},
the following additional regularity properties hold:
\begin{itemize}
 \item[{\rm(v)}] $\vu \in H^1(0,T; H^r(\Omega))$,
 \item[{\rm (vi)}] $\vt \in \LDHD$, $K(\vt) \in \LDHD$.
\end{itemize}
Actually, this completes the proof that $z$ is a stable solution.

\smallskip

\noindent%
$\bullet \,$ {\sc proof of} (v).
To prove the additional regularity for $\vu$, we proceed as in the proof of (i) in section \ref{part1}: we first project
equation \eqref{ns} into the space $\mathbb{H}$ by applying the operator $\mathbb{P}$, then we test by $A^r \vu_t$.
Setting $\hat{\vu} = \vu - \vu_{\Omega}$ (so that $\hat\vu_t=\vu_t$)
and recalling \eqref{projf}, we obtain
\begin{align*}
   \|\hat{\vu}_t\|^2_{\mathbb{V}^r}
    + \frac{1}{2} \ddt \|\hat{\vu}\|_{\mathbb{V}^{r+1}}^2
   = - b(\hat{\vu}, \hat{\vu}, A^r \hat{\vu}_t)
    + \langle A^{r/2} f, A^{r/2} \hat{\vu}_t \rangle.
\end{align*}
We can estimate the trilinear term working as in \eqref{bhatu}, namely we have
\begin{equation}
  - b(\hat{\vu}, \hat \bu, A^r \hat{\vu}_t)
    \le C \|\hat \bu \cdot \nabla\hat \bu\|_{\mathbb{V}^r} \|\hat \bu_t\|_{\mathbb{V}^{r}}
    \stackrel{\eqref{Bone}}{\le} C \|\hat \bu\|_{\mathbb{V}^{2}} \|\hat \bu_t\|_{\mathbb{V}^{r}}
    \le \frac{1}{4} \|\hat \bu_t\|_{\mathbb{V}^{r}}^2
      + C \|\hat \bu\|_{\mathbb{V}^{2}}^2.\label{stimadiA3}
\end{equation}
On the other hand,
\begin{align}
  \langle A^{r/2} f, A^{r/2} \hat{\vu}_t \rangle
   & \le \|f\|_{\mathbb{V}^r} \, \|\hat{\vu}_t\|_{\mathbb{V}^r} \label{stimadiA4} \\
   & \stackrel{\eqref{stimadifVr}}{\le} C \left (\|\varphi\|_{H^3(\Omega)}^{\frac{1 + 2r}{2}} + 1 \right )
       \|\hat{\vu}_t\|_{\mathbb{V}^r}
    \le \frac{1}{4} \|\hat{\vu}_t\|^2_{\mathbb{V}^r} + C \left (\|\varphi\|_{H^3(\Omega)}^{1 + 2r} + 1 \right ). \nonumber
\end{align}
Summing up, we finally obtain (we use here the fact that $r \le 1/2$)
\[
  \|\hat{\vu}_t\|^2_{\mathbb{V}^r} + \ddt \|\hat{\vu}\|^2_{\mathbb{V}^{r+1}}
    \le C \left (\|\varphi\|_{H^3(\Omega)}^2 + 1 \right )
      + C \|\hat{\vu}\|^2_{\mathbb{V}^{2}},
\]
and the conclusion comes by \eqref{rego:vu} and \eqref{rego:fhi}.

\smallskip

\noindent%
$\bullet \,$ {\sc proof of} (vi).
Comparing terms in equation \eqref{calore} and using in particular
\eqref{rego:vt:new} together with the already observed fact that
$\bu$ is bounded in the uniform norm,
we easily deduce
\begin{equation}\label{rego:DeltaK}
  \Delta K(\vt) \in L^2(0,T; L^2(\Omega)),
\end{equation}
whence the second of~(vi).
Now we would like to conclude that $\vt \in \LDHD$. Actually,
using the second \eqref{rego:vt:new} and interpolation, we first notice
that
\begin{equation*}
  K(\vt) \in L^4(0,T;W^{1,4}(\Omega)),
\end{equation*}
which, in view of \eqref{defiK}), is equivalent to
\begin{equation}\label{st:x}
  \vt + \vt^{q+1} \in L^4(0,T;W^{1,4}(\Omega)).
\end{equation}
We now observe that
$$
  - \Delta K(\vt)
   = - \kappa (\vt) \Delta \vt - \kappa'(\vt) | \nabla \vt |^2.
$$
Therefore,
$$
  \iTo ( - \kappa (\vt) \Delta \vt - \kappa'(\vt) | \nabla \vt |^2 )^2
   = \| - \Delta K(\vt) \|_{L^2(0,T; L^2(\Omega))}^2\le C.
$$
Let us deal with the integral in the left hand side. We have
$$
  \iTo ( - \kappa (\vt) \Delta \vt - \kappa'(\vt) | \nabla \vt |^2 )^2
   =: \iTo \kappa^2(\vt) |\Delta \vt|^2
    + \iTo (\kappa'(\vt))^2 | \nabla \vt |^4
  + J,
$$
where
\begin{align*}
  J & := 2 \iTo \kappa(\vt) \kappa'(\vt) \Delta \vt | \nabla \vt |^2 \\
  & \le \frac12 \iTo \kappa^2(\vt) | \Delta \vt |^2
   + C \iTo (\kappa'(\vt))^2 | \nabla \vt |^4 \\
  & = \frac12 \iTo \kappa^2(\vt) | \Delta \vt |^2
   + C \iTo \vt^{2q-2} | \nabla \vt |^4,
\end{align*}
and the last term in the right hand side can be rewritten as
$$
  C \iTo | \nabla \vt^{\frac{q+1}2} |^4,
$$
which is controlled by \eqref{st:x} (in fact this implies that we can control all the
intermediate powers of $\vt$ between $1$ and $q+1$). Hence, using the fact that
$\kappa(\vt) \ge 1$ and applying elliptic regularity,  we finally come to the goal
\begin{equation}
\label{vtLinftyH2}
  \vt \in \LDHD.
\end{equation}


\section{Proof of Theorem \ref{teo:main} part II: uniqueness}
\label{sec:uniq}

We now prove the uniqueness part of Theorem~\ref{teo:main}. Let then $z_0\in \mathcal V^r$,
and let $(\bu_i, \fhi_i,\mu_i,\th_i)$, $i=1,2$, be a couple of stable solutions both emanating
from $z_0$ over the interval $(0,T)$. Setting
$(\bu, \fhi,\mu,\th):=(\bu_1-\bu_2,\fhi_1-\fhi_2, \mu_1-\mu_2,\vt_1-\vt_2)$,
it is readily seen that, then,
\begin{align}
& \div  \bu=0
\label{incomd}\\
&  \bu_t+\bu_1\cdot \nabla \bu+\bu\cdot \nabla \bu_2=\Delta \bu-\div(\nabla\fhi_1\otimes\nabla \fhi)-\div(\nabla \fhi\otimes\nabla\fhi_2)
\label{nsd}\\
& \fhi_t+\bu_1\cdot \nabla\fhi+\bu\cdot \nabla\fhi_2=\Delta \mu
\label{CH1d}\\
&
\mu =-\Delta\fhi+F'(\fhi_1)-F'(\fhi_2)-\th
\label{CH2d}\\
& \th_t+\bu_1\cdot \nabla \th+\bu\cdot \nabla \vt_2+\vt_1\Delta  \mu+\th\Delta  \mu_2-\Delta[K(\vt_1)-K(\vt_2)]
\label{calored}\\
& \mbox{}~~~~~ =
(\nabla \bu_1+\nabla \bu_2)\cdot \nabla \bu+(\nabla \mu_1+\nabla \mu_2)\cdot \nabla  \mu\nonumber
\end{align}
supplemented with null initial data. This guarantees for instance that $\varphi_{\Omega}(t) = 0$ and $\bu_\Omega(t)=\mathbf 0$ for all $t \ge 0$.

Taking advantage of the regularity properties \eqref{rego:vu:new}-\eqref{rego:Kvt:new}, we then
have (actually even something more is true)
\begin{equation}
\label{inizio}
\begin{cases}
\|\bu_i(t)\|_{V}+\|\fhi_i(t)\|_{H^3(\Omega)}+\|\mu_i(t)\|_V+\|\vt_i(t)\|_{V}\leq c,\quad t\in (0,T)\\
\|\bu_i\|_{L^2(0,T;H^2(\Omega))}+\|\mu_i\|_{L^2(0,T;H^3(\Omega))}+\|\vt_i\|_{L^2(0,T;H^2(\Omega))}\leq c
\end{cases}
\end{equation}
for some positive constant $c$ depending on $T$ and on the initial data. We will use the above properties
repeatedly in the sequel. The proof is actually based on the combination of several estimates,
presented in separate subsections for more clarity.


\subsection{Preliminary estimates}

Having in mind to test \eqref{CH1d} by ${\mathcal N}\fhi_t$, we
first estimate $\nabla  \mu$ multiplying \eqref{CH1d} by $ \mu-\mu_\Omega$. This gives
\begin{align*}
\|\nabla \mu\|^2&
=-\l \fhi_t, \mu-\mu_\Omega\r-\l \bu_1\cdot \nabla\fhi, \mu-\mu_\Omega\r-\l \bu\cdot \nabla \fhi_2, \mu-\mu_\Omega\r\\
& \leq
\|\fhi_t\|_{V'}\| \mu-\mu_\Omega\|_V+\|\bu_1\|_{L^3(\Omega)}\|\nabla \fhi\| \| \mu-\mu_\Omega\|_{L^6(\Omega)}+
\|\bu\|\|\nabla \fhi_2\|_{L^4(\Omega)}\| \mu-\mu_\Omega\|_{L^4(\Omega)}.
\end{align*}
Then Friedrich's inequality and \eqref{inizio} yield
\begin{equation}
\label{diffmu}
\|\nabla \mu\|^2\leq c(\|\fhi_t\|^2_{V'}+\|\nabla\fhi\|^2+\| \bu\|^2).
\end{equation}
Next, we control $\Delta\fhi$ by taking the product of \eqref{CH2d} with $-\Delta\fhi$:
\begin{align*}
 \|\Delta \fhi\|^2& =-\l \mu-\mu_\Omega,\Delta\fhi\r-\l \th-\tho,\Delta\fhi\r+\l F'(\fhi_1)-F'(\fhi_2),\Delta\fhi\r\\
  & \leq c \|\Delta\fhi\|(\|\nabla  \mu\|+\|\th-\tho\|+\|F'(\fhi_1)-F'(\fhi_2)\|)
\end{align*}
Recalling \eqref{hp:F4} and exploiting H\"older's inequality with exponents $3$ and $3/2$ together with \eqref{inizio},
we deduce
\begin{equation}
\label{diffeffeH}
\|F'(\fhi_1)-F'(\fhi_2)\|^2\leq c\l (1+|\fhi_1|^{2p_F}+|\fhi_2|^{2p_F}),\fhi^2\r\leq c \|\fhi\|^2_{L^3(\Omega)}\leq c \|\fhi\|^2_{V}.
\end{equation}
Thus, also on account of \eqref{diffmu} we conclude that
\begin{equation}
\label{diffDeltaph}
\|\Delta\fhi\|^2\leq c(\|\nabla  \mu\|^2+\|\th-\tho\|^2+\|\fhi\|^2_{V})\leq
c(\| \bu\|^2+\|\fhi_t\|^2_{V'}+\|\th-\tho\|^2+\|\fhi\|^2_{V}).
\end{equation}


\subsection{Difference of fluid velocities}

The product of \eqref{nsd} by $\bu$, on account of \eqref{incomd}, leads to
\begin{align}
\label{diffu}
 \dfrac12 \ddt \|\bu\|^2+\|\nabla \bu\|^2=-\l \bu\cdot \nabla \bu_2,\bu\r+\l \nabla \fhi_1\otimes \nabla \fhi,\nabla \bu\r+\l
   \nabla \fhi\otimes \nabla\fhi_2,\nabla \bu\r,
\end{align}
where the right hand side is easily controlled thanks to \eqref{inizio}. Indeed, by \eqref{dis:L4} and \eqref{Friedr},
\begin{align*}
\text{RHS} & \leq \|\nabla \bu_2\| \|\bu\|^2_{L^4(\Omega)}+(\|\nabla \fhi_1\|_{L^\infty(\Omega)} +\|\nabla \fhi_2\|_{L^\infty(\Omega)} )\|\nabla \fhi\| \|\nabla \bu\|\\
& \leq \dfrac 12 \|\nabla \bu\|^2+ c\big[\|\bu\|^2+(\|\fhi_1\|_{H^3(\Omega)}+\|\fhi_2\|_{H^3(\Omega)})\|\nabla \fhi\|^2\big].
\end{align*}
Therefore,
\begin{align}
\label{diffu1}
 \ddt \|\bu\|^2+\|\nabla \bu\|^2 \stackrel{\eqref{inizio}}{\leq}  c (\|\bu\|^2 +\|\nabla \fhi\|^2).
\end{align}


\subsection{Difference of temperatures' means}

Integrating \eqref{calored} over $\Omega$ we obtain
\begin{equation}
\label{mediad}
|\Omega|( {{\tho}})_t=\l\nabla \vt_1,\nabla  \mu\r-\l \th-\tho,\Delta\mu_2\r+
\l (\nabla \bu_1+\nabla \bu_2),\nabla \bu\r+
\l (\nabla \mu_1+\nabla \mu_2),\nabla  \mu\r,
\end{equation}
which, by \eqref{inizio}, entails
\begin{equation}
\label{boundmean}
|(\tho)_t|\leq
c(\|\nabla  \mu\|+\|\mu_2\|_{H^3(\Omega)}\|\th-\tho\|_{V'}+\|\nabla \bu\|),
\end{equation}
Moreover, the product of \eqref{mediad} by $\tho$ gives, for (small)
$\alpha,\beta>0$ to be chosen later and corresponding (large) $c>0$,
\begin{align}
\label{add-4}
  \dfrac12 \ddt \tho^2 & \leq c|\tho|(\|\nabla  \mu\|+\|\mu_2\|_{H^3(\Omega)}\|\th-\tho\|_{V'}+\|\nabla \bu\|)\\
   & \leq \beta \|\nabla \bu\|^2+\alpha \eps \|\nabla  \mu\|^2+c(\tho^2+\|\mu_2\|^2_{H^3(\Omega)}\|\th-\tho\|^2_{V'}).
\nonumber
\end{align}


\subsection{Difference of order parameters}

Multiplying
\eqref{CH1d} by ${\mathcal N}\fhi_t$,
we obtain
\begin{equation*}
 \|\fhi_t\|^2_{V'}+\l  \mu,\fhi_t\r=
 -\l \bu_1\cdot \nabla \fhi,{\mathcal N}\fhi_t\r-\l \bu\cdot \nabla \fhi_2,{\mathcal N}\fhi_t\r,
\end{equation*}
where, taking the product of \eqref{CH2d} by $\fhi_t$, the second term reads
\begin{align*}
\l  \mu,\fhi_t\r & = \dfrac 12 \ddt \left( \|\nabla \fhi\|^2-2\l \th-\tho,\fhi\r\right)
+\l F'(\fhi_1)-F'(\fhi_2),\fhi_t\r
+\l \th_t,\fhi\r.
\end{align*}
Combining the above relations, we then deduce
\begin{align}\label{g:11}
  & \|\fhi_t\|^2_{V'}+\dfrac 12 \ddt( \|\nabla \fhi\|^2-2\l \th-\tho,\fhi\r)\\
 \nonumber
  & \mbox{}~~~~~
   =-\l F'(\fhi_1)-F'(\fhi_2),\fhi_t\r
    -\l \bu_1\cdot \nabla \fhi,{\mathcal N}\fhi_t\r-\l \bu\cdot \nabla \fhi_2,{\mathcal N}\fhi_t\r -\l \th_t,\fhi\r.
\end{align}
In order to estimate the fourth term on the right hand side, we multiply
 \eqref{calored} by $\fhi$:
\begin{align*}
 \l \th_t, \fhi\r & =
 \l K(\vt_1)-K(\vt_2),\Delta\fhi\r
 +\l \bu_1(\th-\tho),\nabla \fhi\r-\l \bu\cdot \nabla \vt_2,\fhi\r
 +\l\nabla  \mu,\nabla \vt_1 \, \fhi\r\\
 & \quad
 + \l\nabla  \mu, \vt_1\nabla \fhi\r
 -\l \th\Delta\mu_2,\fhi\r+\l (\nabla \bu_1+\nabla \bu_2)\cdot \nabla \bu+(\nabla \mu_1+\nabla \mu_2)\cdot  \nabla  \mu,\fhi\r.
 \end{align*}
Using \eqref{CH2d}, the first term on the right hand side gives
\begin{align*}
  & \l K(\vt_1)-K(\vt_2),\Delta\fhi\r \\
  & \mbox{}~~~~~ =\l  K(\vt_1)-K(\vt_2),-\mu +\mu_\Omega+F'(\fhi_1)-F'(\fhi_2)-\th+\tho\r
   - (\mu_\Omega+\tho)\int_\Omega [K(\vt_1)-K(\vt_2)]\\
  & \mbox{}~~~~~ =\l  K(\vt_1)-K(\vt_2),-\mu +\mu_\Omega+F'(\fhi_1)-F'(\fhi_2) - (F'(\fhi_1)-F'(\fhi_2))_{\Omega}-\th+\tho\r\\
  & \mbox{}~~~~~ =\l  K(\vt_1)-K(\vt_2),-\mu +\mu_\Omega+F'(\fhi_1)-F'(\fhi_2) - (F'(\fhi_1)-F'(\fhi_2))_{\Omega}\r\\
  & \mbox{}~~~~~~~~~~~~~~~
  -\l  K(\vt_1)-K(\vt_2),\th-\tho\r.
\end{align*}
Hence,
\begin{align*}
  & \l \th_t, \fhi\r =-\l  K(\vt_1)-K(\vt_2),\mu -\mu_\Omega -F'(\fhi_1)+F'(\fhi_2) + (F'(\fhi_1)-F'(\fhi_2))_{\Omega}\r \\
  & \quad -  \l  K(\vt_1)-K(\vt_2),\th-\tho\r
  +\l \bu_1( \th- \tho),\nabla \fhi\r-\l \bu\cdot \nabla \vt_2,\fhi\r
  +\l\nabla  \mu,\nabla \vt_1\,\fhi\r \\
 & \quad
  + \l\nabla  \mu, \vt_1\nabla \fhi\r
  -\l \th\Delta\mu_2,\fhi\r
  + \l (\nabla \bu_1+\nabla \bu_2)\cdot \nabla \bu+(\nabla \mu_1+\nabla \mu_2)\cdot  \nabla  \mu,\fhi\r.
\end{align*}
Combining the above relation with \eqref{g:11}, we then deduce
\begin{align}\label{g:12}
  & \|\fhi_t\|^2_{V'}+\dfrac 12 \ddt( \|\nabla \fhi\|^2-2\l \th-\tho,\fhi\r)-\l  K(\vt_1)-K(\vt_2),\th-\tho\r\\
 \nonumber
  & \mbox{}\quad =-\l F'(\fhi_1)-F'(\fhi_2),\fhi_t\r
    + \l \bu_1 \fhi + \bu \fhi_2,\nabla{\mathcal N}\fhi_t\r\\
 \nonumber
  & \mbox{}\quad \quad +\l  K(\vt_1)-K(\vt_2),\mu -\mu_\Omega-F'(\fhi_1)+F'(\fhi_2) + (F'(\fhi_1)-F'(\fhi_2))_{\Omega}\r\\
 \nonumber
  & \mbox{}\quad \quad -\l \bu_1(\th-\tho),\nabla \fhi\r+\l \bu\cdot \nabla \vt_2,\fhi\r \\
 \nonumber
  & \mbox{}\quad \quad   -\l\nabla  \mu,\nabla \vt_1 \, \fhi\r  - \l\nabla  \mu, \vt_1\nabla \fhi\r
    +  \l \th\Delta\mu_2,\fhi\r  \\
 \nonumber
  & \mbox{}\quad \quad  -\l (\nabla \bu_1+\nabla \bu_2)\cdot \nabla \bu+(\nabla \mu_1+\nabla \mu_2)\cdot  \nabla  \mu,\fhi\r =: \mathcal I_1.
\end{align}
Owing to \eqref{hp:F4}, we have
\begin{equation}
\label{diffeffeV}
 \|\nabla (F'(\fhi_1)-F'(\fhi_2))\| \leq c\| |\nabla \fhi| (1+|\fhi_1|^{p_F})+|\fhi| (1+|\fhi_1|^{p_F-1}+|\fhi_2|^{p_F-1})|\nabla \fhi_2|\|\leq c\|\fhi\|_{V}.
\end{equation}
Thus, recalling that $\l \fhi_t, 1\r=0$, we obtain
\begin{align*}
-\l F'(\fhi_1)-F'(\fhi_2),\fhi_t\r & \leq C \|\nabla( F'(\fhi_1)-F'(\fhi_2))\|\|\fhi_t\|_{V'}
 \leq
c \|\fhi\|_V
\|\fhi_t\|_{V'}.
\end{align*}
Next, owing to \eqref{dis:L4} and \eqref{Friedr},
\begin{align*}
\l \bu_1 \fhi+ \bu \fhi_2,\nabla{\mathcal N}\fhi_t\r
& \leq c\|\fhi_t\|_{V'} ( \| \bu_1\|_{L^4(\Omega)} \|\fhi\|_{L^4(\Omega)} + \|\bu\| \|\fhi_2\|_{L^{\infty}(\Omega)})\\
& \leq c
\|\fhi_t\|_{V'}( \|\nabla \fhi\|+\|\bu\|).
\end{align*}
Having observed that
$$
  \|  K(\vt_1)-K(\vt_2)\|_{3/2}\leq c(\|\th - \tho\| + |\tho| )(1+\|\vt_1\|^q_V+\|\vt_2\|^q_V)\leq c(\|\th - \tho\| + |\tho|),
$$
thanks to \eqref{diffeffeV}, it is straightforward to obtain
\begin{align*}
 & \l  K(\vt_1)-K(\vt_2),\mu -\mu_\Omega-F'(\fhi_1)+F'(\fhi_2)+(F'(\fhi_1)-F'(\fhi_2))_\Omega\r\\
 & \leq
 \|  K(\vt_1)-K(\vt_2)\|_{L^{3/2}(\Omega)}\big(\|\mu -\mu_\Omega\|_{L^3(\Omega)} +\|F'(\fhi_1)-F'(\fhi_2)-(F'(\fhi_1)-F'(\fhi_2))_\Omega\|_{L^3(\Omega)}\big)\\
 & \leq
 c (\|\th - \tho\| + |\tho|) (\|\nabla  \mu\|+\|\fhi\|_V).
\end{align*}
Next, also on account of Agmon's inequality, we can estimate the remaining summands in
$\mathcal I_1$ as follows:
\begin{align*}
 & -\l \bu_1(\th-\tho),\nabla \fhi\r+\l \bu\cdot \nabla \vt_2,\fhi\r\\
 & \mbox{}\quad \leq
  \| \bu_1\|_{L^{\infty}(\Omega)} \|\th-\tho\| \|\nabla \fhi\|+\| \bu\| \| \nabla \vt_2\|_{L^4(\Omega)} \|\fhi\|_{L^4(\Omega)}\\
  &\mbox{}\quad \leq c\left( \| \bu_1\|^{1/2}_{H^2(\Omega)}\|\th-\tho\| +\| \bu\| \|\vt_2\|_{H^2(\Omega)}\right) \|\nabla \fhi\|;
\end{align*}
By \eqref{dis:L4}, \eqref{dis:Linfty} and \eqref{Friedr}
\begin{align*}
 -\l\nabla  \mu,\nabla \vt_1  \fhi+ \vt_1\nabla \fhi \,\r & \leq
 c\|\nabla  \mu\| \left(\|\nabla \vt_1\| \|\fhi\|_{L^{\infty}(\Omega)}+\|\nabla \fhi\|_{L^4(\Omega)}\|\vt_1\|_{L^4(\Omega)}\right)\\
   & \leq c\|\nabla  \mu\| \left( \|\fhi\|^{1/2} \|\fhi\|^{1/2}_{H^2(\Omega)}+\|\nabla \fhi\|^{1/2} \|\fhi\|^{1/2}_{H^2(\Omega)}\right)\\
  & \leq c\|\nabla  \mu\|  \|\nabla\fhi\|^{1/2} \|\fhi\|^{1/2}_{H^2(\Omega)};
\end{align*}
Exploiting \eqref{Friedr} and the injection $V\subset L^p(\Omega)$, for $p\geq 1$,
\begin{align*}
\l \th\Delta\mu_2,\fhi\r &
=\tho \l \Delta\mu_2,\fhi\r+\l (\th-\tho)\Delta\mu_2,\fhi\r\\
& \leq c |\tho| \|\nabla \mu_2\| \|\nabla\fhi\|+c \|\th-\tho\|\|\Delta\mu_2\|_{L^4(\Omega)} \|\fhi\|_{L^4(\Omega)}\\
& \leq c |\tho|  \|\nabla\fhi\|+c \|\th-\tho\|\|\mu_2\|_{H^3(\Omega)} \|\nabla \fhi\|;
\end{align*}
Finally, by the same token and \eqref{dis:L4},
\begin{align*}
  & -\l (\nabla \bu_1+\nabla \bu_2)\cdot \nabla \bu+(\nabla \mu_1+\nabla \mu_2)\cdot  \nabla  \mu,\fhi\r \\
  & \mbox{}\quad \leq
  (\|\nabla \bu_1+\nabla \bu_2\|_{L^4(\Omega)}\|\nabla \bu\| +\|\nabla \mu_1+\nabla \mu_2\|_{L^4(\Omega)}  \|\nabla  \mu\|)\|\fhi\|_{L^4(\Omega)}\\
 & \leq
  \big[ (\| \bu_1\|^{1/2}_{H^2(\Omega)}+\|\bu_2\|^{1/2}_{H^2(\Omega)})\|\nabla \bu\|
     +(\| \mu_1\|^{1/2}_{H^2(\Omega)}+\| \mu_2\|^{1/2}_{H^2(\Omega)})  \|\nabla  \mu\|\big] \|\nabla \fhi\|.
\end{align*}
Collecting the above computations, we finally arrive at
\begin{align*}
  \mathcal I_1 & \leq c(1+\| \bu_1\|^{1/2}_{H^2(\Omega)}+\|\bu_2\|^{1/2}_{H^2(\Omega)})\|\nabla \bu\| \|\nabla\fhi\|
    + c(1+\| \mu_1\|^{1/2}_{H^2(\Omega)}+\| \mu_2\|^{1/2}_{H^2(\Omega)})  \|\nabla  \mu\|\|\nabla \fhi\|\\
  & \quad +c\|\nabla  \mu\|  \|\nabla\fhi\|^{1/2} \|\fhi\|^{1/2}_{H^2(\Omega)}
    +c(\|\th - \tho\| + |\tho|) \|\nabla  \mu\| \\
  & \quad + c(\|\th - \tho\| + |\tho|) (1+\|\mu_2\|_{H^3(\Omega)})\|\fhi\|_V
  + c\|\fhi_t\|_{V'}(\|\fhi\|_V+\|\bu\|) \\
  & \quad + c\|\th-\tho\| \|\bu_1\|^{1/2}_{H^2(\Omega)}\|\nabla \fhi\|+c\|\vt_2\|_{H^2(\Omega)}\| \bu\|  \|\nabla \fhi\|\\
  & \leq \dfrac 12 \|\fhi_t\|^2_{V'}+\dfrac \alpha 2\|\nabla  \mu\|^2
    + \dfrac c\alpha \|\th-\tho\|^2+\dfrac \beta 2 \|\nabla \bu\|^2+\dfrac \alpha 2 \|\Delta \fhi\|^2+ g(t)(\| \fhi\|^2_V+\|\bu\|^2+\tho^2)\\
  & \leq \dfrac 12 \|\fhi_t\|^2_{V'}+\left( \dfrac \alpha 2+\alpha c\right)\|\nabla  \mu\|^2
    +\left( \dfrac c\alpha +\alpha c \right)\|\th-\tho\|^2+\dfrac \beta 2 \|\nabla \bu\|^2 + g(t)(\| \fhi\|^2_V+\|\bu\|^2+\tho^2).
\end{align*}
In the last passage we have used \eqref{CH2d} to control the term depending on $\Delta\fhi$, namely
we noted that
\begin{equation} \label{g0}
  \Delta \fhi = \Delta\fhi - (\Delta\fhi)\OO
   = F'(\fhi_1) - F'(\fhi_2) - ( F'(\fhi_1) - F'(\fhi_2) )\OO
   - (\vt-\tho) - (\mu - \mu\OO);
\end{equation}
moreover,  we have set
\begin{equation} \label{g}
   g(t):=c[1+\|\vt_2\|^2_{H^2(\Omega)}+\| \bu_1\|^2_{H^2(\Omega)}+\|\bu_2\|^2_{H^2(\Omega)}+\| \mu_1\|^2_{H^3(\Omega)}+\| \mu_2\|^2_{H^3(\Omega)}],
\end{equation}
where the (large) constant $c>0$ also depends on the choice of the (small) constants $\alpha,\beta>0$
(the letters $\alpha$ and $\beta$ were
already used before: we may take the smaller of the two choices for $\alpha,\beta$).
Collecting the above estimates, \eqref{g:12} finally gives
\begin{align}
\label{fifone}
  & \|\fhi_t\|^2_{V'}+ \ddt( \|\nabla \fhi\|^2-2\l \th-\tho,\fhi\r)-2\l  K(\vt_1)-K(\vt_2),\th-\tho\r\\
  &\leq \alpha (1+2c)\|\nabla  \mu\|^2
    +c\left( \dfrac 1\alpha+\alpha\right) \|\th-\tho\|^2+\beta\|\nabla \bu\|^2+ g(t)(\| \fhi\|^2_V+\|\bu\|^2+\tho^2).\nonumber
\end{align}
%


\subsection{Difference of temperatures}

Multiplying
\eqref{calored} by ${\mathcal N}(\th-\tho)$ and integrating by parts, we have
\begin{align*}
  & \dfrac 12 \ddt \|\th-\tho\|^2_{V'}+\l K(\vt_1)-K(\vt_2),\th-\tho\r\\
  & \quad = \l
   \bu_1(  \th-\tho), \nabla{\mathcal N}(\th-\tho)\r
   -\l \bu\cdot \nabla \vt_2,{\mathcal N}(\th-\tho)\r\\
  & \quad \quad  +\l \nabla \vt_1\cdot\nabla  \mu, {\mathcal N}(\th-\tho)\r
   +\l \vt_1 \nabla  \mu, \nabla {\mathcal N}(\th-\tho)\r\\
  & \quad\quad
   -\l (\th-\tho)\Delta  \mu_2,{\mathcal N}(\th-\tho)\r
   +\tho\l   \mu_2-(\mu_2)_\Omega,\th-\tho\r\\
  & \quad \quad +\l(\nabla \bu_1+\nabla \bu_2)\cdot \nabla \bu
   +(\nabla \mu_1+\nabla \mu_2)\cdot \nabla  \mu,{\mathcal N}(\th-\tho)\r
   =: I_1+I_2+I_3,
\end{align*}
where the terms $I_i$, $i=1,2,3$, will be specified below.
First of all, for any $\sigma>0$, by interpolation we have
\begin{align*}
  I_1 & := \l\bu_1(  \th-\tho)+ \vt_1 \nabla  \mu, \nabla {\mathcal N}(\th-\tho)\r\\
  & \leq c\|\bu_1\|_\infty \|\th-\tho\| \|\th-\tho\|_{V'}
    + c \|\vt_1\|_{L^{(4+2\sigma)/\sigma}(\Omega)} \|\nabla  \mu\| \|\nabla {\mathcal N}(\th-\tho)\|_{L^{2+\sigma}(\Omega)}\\
  &  \stackrel{\eqref{dis:Brezis}}{\leq}
     c\|\bu_1\|^{1/2}_{H^2(\Omega)} \|\th-\tho\| \|\th-\tho\|_{V'}
   + c \|\nabla  \mu\| \|\nabla {\mathcal N}(\th-\tho)\|^{2/(2+\sigma)} \|\nabla {\mathcal N}(\th-\tho)\|_{V}^{\sigma/(2+\sigma)}\\
  & \leq
     c\|\bu_1\|^{1/2}_{H^2(\Omega)} \|\th-\tho\| \|\th-\tho\|_{V'}
   + c \|\nabla  \mu\| \|\th-\tho\|_{V'}^{2/(2+\sigma)} \|\th-\tho\|^{\sigma/(2+\sigma)}.
\end{align*}
Next, using \eqref{dis:Linfty}, we notice that
$$
  \|{\mathcal N}(\th-\tho)\|_{L^{\infty}(\Omega)}\leq
   c\|\th-\tho\|^{1/2}_{V'} \|\th-\tho\|^{1/2}.
$$
Therefore,
\begin{align*}
  I_2 & := -\l \bu\cdot \nabla \vt_2+ \nabla \vt_1\cdot \nabla  \mu-(\th-\tho)\Delta  \mu_2,{\mathcal N}(\th-\tho)\r \\
  & \quad +\l(\nabla \bu_1+\nabla \bu_2)\cdot \nabla \bu
   +(\nabla \mu_1+\nabla \mu_2)\cdot \nabla  \mu,{\mathcal N}(\th-\tho)\r\\
  & \leq \|{\mathcal N}(\th-\tho)\|_{L^{\infty}(\Omega)} \big[ \|\vt_2\|_V \|\bu\|+\|\vt_1\|_V \|\nabla  \mu\|
     +\|\mu_2\|_{H^2(\Omega)}\|\th-\tho\|\\
  & \quad + (\|\nabla \bu_1\|+\|\nabla \bu_2\|)\|\nabla \bu\|+(\|\nabla \mu_2\|+\|\nabla \mu_2\|)\|\nabla  \mu\| \big] \\
  & \le   c\|\th-\tho\|^{1/2}_{V'} \|\th-\tho\|^{1/2} (\|\bu\|+\|\nabla \bu\|+\|\nabla  \mu\|)
   + c\|\mu_2\|_{H^2(\Omega)}\|\th-\tho\|^{1/2}_{V'} \|\th-\tho\|^{3/2}.
\end{align*}
Finally,
\begin{align*}
  I_3 & := \tho\l   \mu_2-(\mu_2)_\Omega,\th-\tho\r \leq
   c|\tho| \|\mu_2-(\mu_2)_\Omega\|_V\|\th-\tho\|_{V'}\leq c|\tho| \|\th-\tho\|_{V'}.
\end{align*}
Collecting the last three estimates and using Young's inequality, we eventually infer
\begin{align} \label{last}
  & \dfrac 12 \ddt \|\th-\tho\|^2_{V'}+\l K(\vt_1)-K(\vt_2),\th-\tho\r
    \leq \delta\eps  \|\th-\tho\|^2 \\
 \nonumber
  & \quad + \beta\|\nabla \bu\|^2+\alpha\eps\|\nabla\mu\|^2
   + C_* (1+\|\bu_1\|_{H^2(\Omega)}
   +\|\mu_2\|^2_{H^3(\Omega)})\|\th-\tho\|^2_{V'}+c(\tho^2+\|\bu\|^2),
\end{align}
where the ``large'' constant $C_*$ may depend on the ``small'' constants
$\alpha$, $\beta$, $\delta$, $\eps$ whose value will be specified
at the end.


\subsection{Conclusion}

In order to accomplish our purpose, we introduce the functional
\begin{align*}
  \ZZZ(t) & := 2\|\bu(t)\|^2
   +\eps\|\nabla \fhi(t)\|^2
   -2\eps \l \th(t)-\tho(t),\fhi(t)\r
   +\|\th(t)-\tho(t)\|^2_{V'}
   +\tho(t)^2,
\end{align*}
noticing that, provided $\eps>0$ is small enough,
$$
  \ZZZ(t)\geq c_\eps(\|\bu(t)\|^2+\|\nabla \fhi(t)\|^2+\|\th(t)-\tho(t)\|^2_{V'}+\tho(t)^2).
$$
Now, adding together \eqref{last} with \eqref{diffu1}, \eqref{add-4} and $\dfrac{\eps}{2}$
times \eqref{fifone}, we see that
\begin{align*}
  & \dfrac 12 \ddt\ZZZ+(1-\eps)\l  K(\vt_1)-K(\vt_2),\th-\tho\r
    +\|\nabla \bu\|^2+\dfrac \eps 2\|\fhi_t\|^2_{V'}\\
 & \quad \leq \eps \left(c+c\alpha^2+\delta\right) \|\th-\tho\|^2
   +3\alpha\eps \|\nabla \mu\|^2
   +3\beta \|\nabla\bu\|^2
   +g(t)\ZZZ,
\end{align*}
where $g$ was defined in \eqref{g}.

We now develop the second term in the \lhs. Owing to \eqref{defiK}, we actually have
\[
  \langle K(\vt_1) - K(\vt_2), \vt - \vt_{\Omega} \rangle
   = \|\vt - \vt_{\Omega}\|^2 + \frac{1}{q+1} \langle \ell(\vt_1) - \ell(\vt_2), \vt - \vt_{\Omega} \rangle,
\]
having set $\ell(\vt_i) = \vt_i^{q+1}$, $i = 1,2$. Now
\[
  \ell(\vt_1) - \ell(\vt_2)
    = \int_0^1 \dds \ell(s \vt_1 + (1-s) \vt_2) \, \dis
    = \int_0^1 \ell'(s \vt_1 + (1-s) \vt_2) (\vt_1 - \vt_2) \, \dis
    = \omega(\vt_1, \vt_2) \vt,
\]
where
\[
\omega(\vt_1, \vt_2) := \int_0^1 \ell'(s \vt_1 + (1-s) \vt_2) \, \dis
\]
and we notice that $\omega(\vt_1,\vt_2)\ge 0$ almost everywhere.
We also observe that, due to \eqref{rego:vt:step3},
\begin{equation}\label{stimadimu}
  |\omega(\vt_1, \vt_2)| \le c (1 + |\vt_1|^q + |\vt_2|^q) .
\end{equation}
Hence, it is not difficult to deduce
\begin{align*}
  \langle K(\vt_1) - K(\vt_2), \vt - \vt_{\Omega} \rangle
   & = \|\vt - \vt_{\Omega}\|^2 + \frac{1}{q+1} \langle \ell(\vt_1) - \ell(\vt_2), \vt - \vt_{\Omega} \rangle\\
  & \ge \|\vt - \vt_{\Omega}\|^2 + \frac{1}{q+1} \int_{\Omega} \omega(\vt_1, \vt_2)  \vt (\vt - \vt_{\Omega}) \\
  & = \|\vt - \vt_{\Omega}\|^2 + \frac{1}{q+1} \int_{\Omega} \omega(\vt_1, \vt_2) |\vt - \vt_{\Omega}|^2
   + \frac{1}{q+1} \int_{\Omega} \omega(\vt_1, \vt_2) \vt_{\Omega} (\vt - \vt_{\Omega})\\
 & \ge \|\vt - \vt_{\Omega}\|^2 + \frac{1}{q+1} \int_{\Omega} \omega(\vt_1, \vt_2) \vt_{\Omega} (\vt - \vt_{\Omega}).
\end{align*}
Therefore, using \eqref{rego:vt:step3} and \eqref{stimadimu}, we arrive at
\[
  \left |\frac{1}{q+1} \int_{\Omega} \omega(\vt_1, \vt_2) \vt_{\Omega} (\vt - \vt_{\Omega}) \right|
   \leq c \|\omega(\th_1,\th_2) \| | \tho | \|\th-\tho\|
   \leq \dfrac 12 \|\th-\tho\|^2 + c \tho^2,
\]
and, in turn,
\[
\langle K(\vt_1) - K(\vt_2), \vt - \vt_{\Omega} \rangle\ge \dfrac 12 \|\th-\tho\|^2 - c \tho^2.
\]

Summing up, we finally see that
\begin{align}\label{g21}
  & \dfrac 12 \ddt\ZZZ
    +\dfrac {1-\eps}2\|\th-\tho\|^2
    + \|\nabla \bu\|^2
    + \dfrac\eps 2\|\fhi_t\|^2_{V'}\\
 \nonumber
  & \quad \leq \eps \left(c+c\alpha^2+\delta\right)  \|\th-\tho\|^2
    + 3\alpha\eps \|\nabla \mu\|^2
    + 3\beta \|\nabla\bu\|^2
    + g(t) \ZZZ\\
 \nonumber
  & \quad \stackrel{\eqref{diffmu}}{\le} \eps \left(c+c\alpha^2+\delta\right)  \|\th-\tho\|^2
    + 3\alpha\eps c\|\fhi_t\|^2_{V'} 
    + 3\beta \|\nabla\bu\|^2
    + g(t)\ZZZ
    \end{align}
where in deducing the second inequality we used \eqref{g0},
the Friedrichs inequality \eqref{Friedr} together with \eqref{diffeffeV}, namely,
$$
   \| \nabla ( F'(\fhi_1) - F'(\fhi_2) ) \|
     \le c \| \nabla \fhi \|,
$$
as one can verify directly by using assumption \eqref{hp:F4}, the
previous estimates (cf.~\eqref{inizio}), and the fact that
$\fhi\OO = 0$. Moreover, the last inequality in \eqref{g21}
follows easily by comparing terms in \eqref{CH1d} and using
once more \eqref{inizio}. In particular, we can observe that
$$
   \| \nabla \mu \|
     \le c\big( \| \fhi_t \|_{V'} + \| \nabla \fhi \| + \| \vu \| \big).
$$
In particular, we may notice that the constant $c$ on the \rhs\ of
\eqref{g21} is {\sl independent}\/ of the parameters $\alpha,\beta,\delta,\eps$
(in fact it depends only on the regularity properties of solutions
collected in \eqref{inizio}).
As a consequence, we can first choose $\alpha,\beta,\delta>0$ small enough in order
to absorb the last two terms on the \rhs\ of \eqref{g21}
with the corresponding quantities appearing in the \lhs. In a second stage,
we also take $\eps$ sufficiently small (possibly depending
on the other parameters), in such a way that also the first
term on the \rhs\ is absorbed. Consequently, \eqref{g21}
eventually reduces to the simpler form
$$
  \ddt \ZZZ + \kappa_0 \left( \|\th-\tho\|^2+\|\nabla \bu\|^2+\|\fhi_t\|^2_{V'}\right) \leq g(t)\ZZZ,
$$
where $\kappa_0>0$ and $g$ has the same expression as in~\eqref{g} and therefore,
owing to \eqref{inizio}, it is summable over the interval $(0,T)$.
Being $\ZZZ(0) = 0$, then using Gronwall's
lemma we deduce that $\ZZZ$ is identically $0$ over $(0,T)$, whence the assert.


\section{Proof of Theorem \ref{teo:att}}
\label{quattro}

The proof is divided into various steps that are discussed in separate subsections
and in some occasion presented as single Lemmas. As before, $Q$ will denote a computable positive function,
increasingly monotone in each of its arguments,
whose expression is independent of time unless otherwise specified.
Hypotheses  \eqref{hp:F1}-\eqref{hp:F4} and \eqref{hp:kappa}
on the nonlinear terms will be always implicitly assumed in the sequel.


\subsection{Uniform estimates}

Theorem~\ref{teo:main} provides existence and uniqueness of  ``stable solutions'' on
fixed time intervals of arbitrary length. Our first step consists in showing that
the $\calV^r$-magnitude of any such solution remains bounded in a way only
depending on the initial data also as time goes to infinity.
\bele\label{dissintlemma}
 Any global solution $z(t)$ originating from an initial datum $z_0\in \calV^r$ satisfies
 \begin{equation}\label{u-bound}
  \|z(t)\|_{\mathcal H}+|\mu\OO(t)| \le
     Q(\|z_0\|_{\mathcal{H}})\quad \perogni t\geq 0.
 \end{equation}
 Moreover, we have the dissipation integrals
 \begin{equation} \label{diss-int}
   \int_{0}^{\infty} \io\left( |\nabla\vu(s)|^2+|\nabla \vt(s)|^2+|\nabla \mu(s)|^2\right) \, \dis
    + \sup_{t \ge 0} \int_{t}^{t + 1} \io|\nabla\Delta \fhi(s)|^2 \, \dis
      \le Q(\|z_0\|_{\mathcal{H}}).
 \end{equation}
\enle
\begin{proof}
Arguing as in \cite{ERS2}, we get the energy estimate, corresponding to the energy conservation principle,
testing \eqref{ns} by $\vu$, \eqref{CH1} by $\mu$, \eqref{CH2} by $\fhi_t$ and \eqref{calore} by $1$,
integrating over $\Omega$, and summing all the obtained relation together. Namely, we deduce
\[
  \ddt \mathcal{E}(\vu,\fhi,\teta) = 0,
\]
where
$$
  \mathcal E(\vu,\fhi,\teta)
    =\io \left( \dfrac12 |\vu|^2+\dfrac12 |\nabla\fhi|^2+F(\fhi)+\teta\right).
$$
This entails in particular that
\[
  \mathcal{E}(z(t)) = \mathcal{E}(z_0) \qquad \perogni t \ge 0.
\]
Therefore, using \eqref{czero}, we obtain
\begin{equation} \label{diss_stima1}
  \frac12 \|\vu(t)\|^2
   + \frac12 \|\nabla \varphi(t)\|^2
   + \int_{\Omega} \vt(t)
   - c_0 \le \mathcal{E}(z(t))
  = \mathcal{E}(z_0) \le Q(\|z_0\|_{\mathcal{H}}).
\end{equation}
Next, arguing once more as in \cite{ERS2}, we obtain the entropy estimate, corresponding to the entropy production principle.
It is obtained by testing \eqref{calore} by $- \vt^{-1}$ and integrating over $\Omega$, which yields
\begin{equation}\label{entropy_stima}
 \ddt \int_{\Omega} (- \log \vt)
  + \int_{\Omega} \frac{1}{\vt} \left(|\nabla \vu|^2 + |\nabla \mu|^2\right)
  + \int_{\Omega} \frac{\kappa(\vt)}{\vt^2} |\nabla \vt|^2
  = 0.
\end{equation}
Then, integrating in time over $(0,t)$ and recalling that, due to \eqref{cons_medie_energia},
\begin{equation}\label{medie:11}
  \varphi(t)_{\Omega} = (\varphi_0)_{\Omega},
\end{equation}
we deduce first of all that
\begin{equation}\label{controllo_log}
  - \int_{\Omega} \log \vt(t) \le - \int_{\Omega} \log \vt_0.
\end{equation}
Summing the above to \eqref{diss_stima1} and using that
$| \log r | \le r - \log r$ for all $r>0$, we then obtain
\begin{equation}\label{diss_stima2}
  \| \log \vt(t) \|_{L^1(\Omega)}
   \le Q(\|z_0\|_{\mathcal{H}}),
\end{equation}
for every $t \ge 0$.
%
%
Combining \eqref{diss_stima1} and \eqref{diss_stima2}, we then deduce
\begin{equation}\label{diss_stima3}
   \|z(t)\|_{\mathcal{H}}
    = \|\vu(t)\| + \|\fhi(t)\|_{V}
    + \|\vt(t)\|_{L^1(\Omega)}
    + \|\log \vt(t)\|_{L^1(\Omega)}
  \leq Q(\|z_0\|_{\mathcal{H}}),
\end{equation}
where the full $V$-norm of $\fhi(t)$ is controlled also in view of \eqref{medie:11}.
Finally, by \eqref{CH2}, we have
\[
  \mu(t)_{\Omega} = F'(\varphi(t))_{\Omega} - \vt(t)_{\Omega}.
\]
Hence, recalling also \eqref{hp:F4},
\[
  |\mu(t)_{\Omega}|
  \le Q(\|\varphi(t)\|_{V}, \vt(t)_{\Omega})
    \le Q(\|z_0\|_{\mathcal{H}}) \quad \perogni t \ge 0,
\]
and this result together with \eqref{diss_stima3} brings \eqref{u-bound}.

Let us now deal with \eqref{diss-int}. Integrating the entropy
estimate \eqref{entropy_stima} over $(0,t)$ for a generic $t>0$, we deduce
\begin{align} \label{int-ent-est}
  0 & \le \int_{0}^{t} \int_{\Omega} \frac{1}{\vt} \left(|\nabla \vu|^2 + |\nabla \mu|^2\right)
   + \int_{0}^{t} \int_{\Omega} \frac{\kappa(\vt)}{\vt^2} |\nabla \vt|^2 \\
 \nonumber
  & = \int_{\Omega} \log \vt(t)
    - \int_{\Omega} \log \vt_0
   \stackrel{\eqref{u-bound}}{\le} Q(\|z_0\|_{\mathcal{H}}).
\end{align}
Arguing as in \cite{ERS2}, we then deduce
\begin{align}\label{nablavtheta}
  \int_{0}^{t} \int_{\Omega} |\nabla \vt|^2
   & \le c \int_{0}^{t} \int_{\Omega} \left (\frac{1}{\vt^2}
    + k_q \vt^{q-2} \right ) |\nabla \vt|^2 \\
 \nonumber
  & \le c \int_{0}^{t} \int_{\Omega} \frac{\kappa(\vt)}{\vt^2} |\nabla \vt|^2
    \stackrel{\eqref{int-ent-est}}{\le} Q(\|z_0\|_{\mathcal{H}})
      \quad \perogni t \ge 0,
\end{align}
where we remark that the quantity on the \rhs\ is independent of $t$.
Then, integrating \eqref{calore} over~$\Omega$ and using the periodic boundary
conditions, we infer
\[
  \|\nabla \vu\|^2 + \|\nabla \mu\|^2
   = \ddt \int_{\Omega} \vt + \int_{\Omega} \vt \Delta \mu
   \le \ddt \int_{\Omega} \vt + \frac12 \| \nabla \vt \|^2 + \frac12 \| \nabla \mu \|^2.
\]
Hence, integrating in time over $(0,t)$, absorbing the last term on the
\rhs\ with the corresponding one on the \lhs, and recalling \eqref{nablavtheta}
and \eqref{diss_stima3}, we readily arrive at
%
%
%
%
%
%
\begin{equation}\label{nablavumu}
  \int_0^t \left ( \|\nabla \vu\|^2 + \|\nabla \mu\|^2 \right )
    \le  Q(\|z_0\|_{\calH}), \quad \perogni t\ge 0.
\end{equation}
Collecting \eqref{nablavtheta} and \eqref{nablavumu}, observing once more that the quantities on the
\rhs\ are independent of $t$, and letting $t\nearrow\infty$,
we then deduce the first bound in \eqref{diss-int}. Next,
testing \eqref{CH2} by $\Delta^2\fhi$ and performing
standard manipulations (see \cite[(3.20)]{ERS2} for details), we deduce
\[
  \|\nabla \Delta \varphi\|^2
   \le c \left ( Q(\|\fhi\|_{V})
       + \|\nabla \vt\|^2
       + \|\nabla \mu\|^2 \right)
   \stackrel{\eqref{diss_stima3}}{\le} c \left(Q(\|z_0\|_{\mathcal{H}})
       + \|\nabla \vt\|^2
       + \|\nabla \mu\|^2 \right).
\]
Then, integrating the above relation over a generic interval $(t,t+1)$ and
using \eqref{nablavtheta} and \eqref{nablavumu}, we eventually obtain the
second bound in \eqref{diss-int}.
\end{proof}
\noindent%
It is worth noting that the above lemma is stated for ``stable solutions'' (i.e.,
for initial data $z_0\in \calV^r$); however, only the $\calH$-regularity
of the initial datum is used in the proof, and consequently the assert
remains valid for more general classes of solutions. The same holds for the next
result, which is an immediate consequence of the lemma.
\beco\label{coro:y}
 Under the same assumptions as in\/ {\rm Lemma~\ref{dissintlemma}}, for any $z_0\in \calV^r$
 and any $t\ge 0$, there exists $s\in (t,t+1)$ such that
 \beeq{ubn}
   \|z(s)\|_{\mathcal{V}} + \|\mu(s)\|_V
    = \|\vu(s)\|_{V}
     + \|\fhi(s)\|_{H^3(\Omega)}
     + \|\vt(s)\|_V
     + \|\mu(s)\|_{V}
   \leq Q(\|z_0\|_{\calH}).
 \eddeq
\enco
\noindent%
Actually, a similar estimate holds also for the ``$\calV^r$-magnitude''
of the solution (of course now assuming the initial datum to lie in $\calV^r$
is essential). This is the object of the next
\bele\label{lemma:x}
 Given $z_0\in \mathcal{V}^r$, for any $t\geq 0$ there exists $s\in [t,t+1)$ such that
 \begin{align}\label{ubn2}
   \|z(s)\|_{\mathcal V^r}
     + \|\mu(s)\|_V
    & = \|\vu(s)\|_{H^{r+1}(\Omega)}
     + \|\fhi(s)\|_{H^3(\Omega)}
     + \|\vt(s)\|_V
     + \|(1/\teta(s))\|_{L^1(\Omega)}
     + \|\mu(s)\|_V \\
  \nonumber
    & \leq Q(\|z_0\|_{\mathcal{V}^r}).
 \end{align}
\enle
\begin{proof}
Being $z_0\in\calV^r$, we have in particular
$(1/\vt_0)\in L^1(\Omega)$. Hence we may apply
Lemma~\ref{thetaalpha} with $\alpha=2$ over the time interval
$(0,+\infty)$. Note, indeed, that the quantity noted as $N$
in \eqref{nablate} is globally controlled due to~\eqref{diss-int}.
Thus, we deduce
\begin{equation}\label{sempret}
  \|\teta^{-1}(t)\|_{L^1(\Omega)}
   \leq Q(\|z_0\|_{\mathcal V}),\quad \perogni t\geq 0.
\end{equation}
Hence, it only remains to improve the regularity estimate on $\vu$.
To this aim, let us take any $t\ge 0$ and observe that, thanks to
\eqref{diss-int}, there exists $\tau\in (t,t+1/2)$ such that
$\| \vu(\tau) \|_V \le Q(\|z_0\|_{\mathcal H})$. Then, test \eqref{ns}
by $-\Delta \vu$, and integrate over $(\tau,\tau+1/2)\times \Omega$.
Performing standard manipulations and using the regularity of $\vu(\tau)$
and the uniform bounds \eqref{u-bound} and \eqref{diss-int},
it is not difficult to deduce
$$
  \int_\tau^{\tau+1/2} \| \Delta \vu \|^2
   \leq Q(\|z_0\|_{\mathcal H}).
$$
As a consequence, there exists $s\in (\tau,\tau+1/2)$
(so that in particular $s\in (t,t+1)$) such that
$\| \vu(s) \|_{H^2(\Omega)} \le Q(\|z_0\|_{\mathcal H})$.
Hence we have in particular \eqref{ubn2}.
\end{proof}
\noindent
The next property plays a key role in the asymptotic
analysis of stable solutions. Namely, we can prove that
if the initial datum $z_0$ lies in $\calV^r$ then the
$\calV^r$-magnitude of $z(t)$ is controlled uniformly in time:
\bele
 Given $z_0\in \mathcal{V}^r$, we have the bound
 \begin{equation}\label{lemma4.3}
   \|z(t)\|_{\mathcal{V}^r}
     \leq Q(\|z_0\|_{\mathcal{V}^r}), \quad \text{for a.a.~} t\in(0,\infty).
 \end{equation}
 Moreover, for any $s\ge 0$, there holds the following
 additional bound:
 \begin{align}\label{dal:main3}
   & \| \mu \|_{L^2(s,s+2;H^3(\Omega))}
    + \| \vu_t \|_{L^2(s,s+2;H^{r}(\Omega))}
    + \| \vu \|_{L^2(s,s+2;H^{2+r}(\Omega))}
    + \| \fhi_t \|_{L^2(s,s+2;V)} \\
 \nonumber
  & \qquad\quad
    + \| \vt \|_{L^2(s,s+2;H^2(\Omega))}
    + \| \vt_t \|_{L^2(s,s+2;H)}
     \le Q(\| z_0 \|_{\calV^r}),
 \end{align}
 with $Q$ independent of the choice of $s$.
\enle
\begin{proof}
Given any $s\ge 0$ such that $z(s)\in \calV^r$,
we may interpret $z(s)$ as an ``initial'' datum and
apply Theorem~\ref{teo:main} over the time interval
$(s,s+2)$ (in place of $(0,T)$). Then, the solution
satisfies the regularity properties
\eqref{rego:vu:new}-\eqref{rego:1vt:new}
over $(s,s+2)$. More precisely, one has the
quantitative estimates
\begin{align}\label{dal:main2}
  & \| \vu \|_{L^\infty(s,s+2;H^{1+r}(\Omega))}
   + \| \fhi \|_{L^\infty(s,s+2;H^3(\Omega))}
   + \| \mu \|_{L^\infty(s,s+2;V)}\\
 \nonumber
  & \quad\quad
   + \| \vt \|_{L^\infty(s,s+2;V)}
   + \| K(\vt) \|_{L^\infty(s,s+2;V)}
   + \| 1/\vt \|_{L^\infty(s,s+2;L^{1}(\Omega))}
  \le Q(\| z(s) \|_{\calV^r})
\end{align}
and
\begin{align}\label{dal:main3x}
  & \| \mu \|_{L^2(s,s+2;H^3(\Omega))}
   + \| \vu_t \|_{L^2(s,s+2;H^{r}(\Omega))}
   + \| \vu \|_{L^2(s,s+2;H^{2+r}(\Omega))}
   + \| \fhi_t \|_{L^2(s,s+2;V)} \\
 \nonumber
 & \qquad\quad
   + \| \vt \|_{L^2(s,s+2;H^2(\Omega))}
   + \| \vt_t \|_{L^2(s,s+2;H)}
    \le Q(\| z(s) \|_{\calV^r}),
\end{align}
where in this case $Q$ may depend on the length of the considered
time span, which is however fixed (and equal to 2).
Hence, taking first $s=0$, which is possible
because $z_0\in \calV^r$ by assumption
and then with the choices of $s$ provided by
Lemma~\ref{lemma:x}, we readily get the assert
(clearly \eqref{dal:main3x} reduces to \eqref{dal:main3}
and \eqref{dal:main2}, written concisely, becomes
\eqref{lemma4.3}). Indeed, we may notice that the $L^1$-norm of $1/\vt$
(which is a summand in the quantity $\| z \|_{\calV^r}$)
has already been controlled uniformly over $(0,\infty)$ by virtue of
\eqref{sempret}.
\end{proof}
%


\subsection{Asymptotic compactness}

We are now ready to show asymptotic compactness of trajectories
associated to stable solutions. Namely, if the initial datum $z_0$ lies
in $\calV^r$, then for any $t>0$ the corresponding solutions
$z(t)$ belongs to  $\calW$. Moreover, the $\calW$-magnitude of
$z(t)$ is uniformly bounded for large $t$. This is stated
more precisely in the
\bele\label{ascpt-le}
 Let $z_0\in \mathcal V^r$. Then, for any $\tau>0$ there holds
 \begin{equation}\label{ACtotal}
   \|z\|_{L^\infty(\tau,+\infty;\mathcal W)}\leq Q( \|z_0\|_{\calV^r}, \tau^{-1}).
 \end{equation}
\enle
\begin{proof}
We need to prove the following regularity properties (and  control  the corresponding norms  by
a quantity $Q(\|z_0\|_{\calV^r},\tau^{-1})$):
\begin{itemize}
\item[(vii)] $1/\teta\in L^\infty(\tau,+\infty;L^4(\Omega))$ and $\nabla(1/\teta)\in L^\infty(\tau,+\infty;L^1(\Omega))$;
\item[(viii)] $\vu\in L^\infty(\tau,+\infty; H^2(\Omega))$
and $\vu\in L^2(t, t + 1, H^3(\Omega))$ for all $t > \tau$;
\item[(ix)] $\teta \in L^{\infty}(\tau,+\infty; H^2(\Omega))$;
\item[(x)] $\fhi\in L^\infty(\tau,+\infty;H^4(\Omega))$.
\end{itemize}
\noindent
$\bullet \,$ \textsc{proof of \textnormal{(vii)}}.
Let us go back to formula \eqref{alfetta1} in Lemma \ref{thetaalpha}
with $\alpha=2$. This contains the additional estimate
\begin{equation}\label{asym:11}
  \| \nabla \teta^{-1/2}\|_{L^2(0,+\infty;H)}
   \le Q(\|z_0\|_{\calV^r}).
\end{equation}
Combining this property with the information on $1/\vt$ provided by~\eqref{lemma4.3}
and using the continuity of the embedding $V\subset L^p(\Omega)$ for any $p\in[1,\infty)$, we
deduce
\begin{equation}\label{asym:12}
  \sup_{t\ge 0} \| \teta^{-1}\|_{L^1(t,t+1;L^p(\Omega))}
   \le Q(\|z_0\|_{\calV^r},p).
\end{equation}
Let now apply once more Lemma \ref{thetaalpha} for a generic $\alpha\ge 2$.
Then, the differential version of \eqref{alfetta1} reads
\begin{equation} \label{tetaprimo}
  \ddt \io \teta^{1-\alpha} + c_\alpha \| \nabla \vt^{(1-\alpha)/2} \|^2
   \leq c_\alpha' \|\nabla\teta\|^2,
\end{equation}
for some $c_\alpha,c_\alpha'>0$.
For our purposes it is enough to take $\alpha=5$. Then,
from \eqref{asym:12} there follows that, for any $t\ge 0$
and $\tau>0$, there exists $s\in [t,t+\tau]$ such that
\begin{equation} \label{tetaprimo2}
  \| \teta^{-1}(s) \|_{L^4(\Omega)}
   \le \frac1\tau \int_t^{t+\tau} \| \teta^{-1}(r) \|_{L^4(\Omega)} \,\dir
   \le \frac1\tau  Q(\|z_0\|_{\calV^r})
   = Q(\|z_0\|_{\calV^r},\tau^{-1}).
\end{equation}
Hence, choosing $t=0$ and $s\in(0,\tau)$ such that \eqref{tetaprimo2} holds,
integrating \eqref{tetaprimo} (with $\alpha=5$)
over $(s,+\infty)$, and recalling \eqref{diss-int}, we readily obtain
the first of~(vii). 
%
%
Then, recalling \eqref{dal:main2}, for any $t\ge \tau>0$, we have
$$
  \| \nabla \teta^{-1}(t) \|_{L^1(\Omega)}
   \le \| \nabla \teta(t) \| \, \| \teta^{-1}(t) \|_{L^4(\Omega)}^2
   \le Q(\|z_0\|_{\calV^r},\tau^{-1}),
$$
whence follows the second of~(vii).
\beos\label{rem:moser}
 An easy refinement of the argument above yields more precisely
  $$
    \| \teta^{-1} \|_{L^\infty(\tau,\infty;L^p(\Omega))}
     \le Q(\|z_0\|_{\calV^r},\tau^{-1},p)
  $$
  for all $\tau>0$ and all $p\in[1,\infty)$. Actually,
  at the price of some additional work, in the
  spirit of Moser's iterations one may also prove that
 \begin{equation}  \label{iter:11c}
   \| \vt^{-1} \|_{L^\infty(\tau,\infty;L^\infty(\Omega))}
    \le Q(\|z_0\|_{\calV^r},\tau^{-1}).
 \end{equation}
 We omit details because we do not want to overburden the reader with
 technical arguments.
\eddos

\smallskip

\noindent%
$\bullet$ \, \textsc{proof of \textnormal{(viii)}}.
In the sequel we shall denote by $C$ a generic positive constant of the form
$C=Q(\|z_0\|_{\calV^r},\tau^{-1})$. Namely, $C$ may first of all depend on the
quantities uniformly estimated by \eqref{lemma4.3}. Moreover, it may also depend
on time in the sense that it may explode (in a controlled and computable
way) as $\tau\searrow 0$. This is a natural behavior as we are looking for
parabolic smoothing estimates.

That said, to prove the additional regularity for $\vu$
we proceed as in the proof of (i) in Subsection~\ref{part1}:
we first project equation \eqref{ns} into the space $\mathbb{H}$ by applying the operator $\mathbb{P}$,
then we test by $A^2 \hat\vu$, where $\hat{\vu} = \vu - \vu_{\Omega}$.
Recalling \eqref{projf}, we obtain
\begin{align}\label{eq:Ar:bis}
  \frac{1}{2} \ddt \|A \hat \vu\|^2 + \|A^{3/2} \hat \vu\|^2
   & = - b (\hat \vu, \hat \vu, A^{2} \hat \vu) + \langle f, A^{2} \hat \vu \rangle \\
 \nonumber
  & \le - b (\hat \vu, \hat \vu, A^{2} \hat \vu)
       + \frac{1}{4} \|A^{3/2} \hat \vu\|^2 + C \left (\|\varphi\|^3_{H^3(\Omega)} + 1 \right ),
\end{align}
where we also used \eqref{rego:Temam:2}. On the other hand,
\begin{align*}
   |\langle (\hat \vu \cdot \nabla) \hat \vu, A^2 \hat \vu \rangle |
  & = |\langle A^{1/2}(\hat \vu \cdot \nabla \hat \vu), A^{3/2} \hat \vu \rangle | \\
  & \le \int_{\Omega} (|\nabla \hat \vu|^2 + |\hat \vu| |\nabla^2 \hat \vu|) \, |\nabla^3 \hat \vu| \\
  & \le \|\nabla \hat \bu\|^2_{L^4(\Omega)} \, \|\nabla^3 \hat \bu\| + \|\hat \bu\|_{L^{\infty}(\Omega)} \|\nabla^2 \hat \bu\| \, \|\nabla^3 \hat \bu\| \\
  & \stackrel{\eqref{dis:L4}, \eqref{dis:Linfty}}{\le} C \|\nabla \hat \bu\| \, \|\nabla^2 \hat \bu\| \, \|\nabla^3 \hat \bu\|
    + C \|\nabla^2 \hat \bu\|^{3/2} \, \|\hat \bu\|^{1/2} \, \|\nabla^3 \hat \bu\| \\
  & \stackrel{\eqref{dis:BrezziGilardi}}{\le} \frac{1}{8} \|\nabla^3 \hat \bu\|^2 + C \|\nabla^2 \hat\bu\|^2 + C \|\nabla^3 \hat \bu\|^{7/4}\\
  &\le \frac{1}{4} \|\nabla^3 \hat \bu\|^2 + C \big( \|\nabla^2 \hat\bu\|^2 + 1 \big).
\end{align*}
Coming back to \eqref{eq:Ar:bis} we then obtain
\[
  \frac{1}{2} \ddt \|A \hat \vu\|^2 + \frac{1}{2} \|A^{3/2} \hat \vu\|^2
   \le C \left (\|\varphi\|_{H^3(\Omega)}^3 + 1 \right ) + C \|\nabla^2 \hat \vu\|^2.
\]
Then, the thesis is obtained by Gronwall's lemma by also exploiting \eqref{dal:main2}.
To be more precise, we integrate the above relation over time
intervals of fixed length (for instance equal to~$2$) taking
as starting point suitable times $s$ such that $\|\vu(s)\|_{H^2(\Omega)}$ is controlled
by the $\calV^r$-magnitude of the initial datum (these times $s$ are characterized in
the proof of Lemma~\ref{lemma:x}). This yields (viii) away from $0$.
Then, in order to control the boundary layer at $t\sim 0$, we multiply the above inequality
by $t\in(0,1)$ obtaining
\[
  \ddt \left( \frac{t}{2} \|A \hat \vu\|^2 \right) + \frac{t}{2} \|A^{3/2} \hat \vu\|^2
   \le \frac12 \|A \hat \vu\|^2 + C \left (\|\varphi\|_{H^3(\Omega)}^3 + 1 \right ) + C \|\nabla^2 \hat \vu\|^2.
\]
Integrating over $(0,1)$ and using \eqref{dal:main3}, we get~(viii)
for $t\sim 0$, as desired.

\smallskip

\noindent%
$\bullet \,$ \textsc{proof of \textnormal{(ix)}}.
We test \eqref{calore} by $\Delta^2 \vt$ to obtain
\begin{equation}\label{calore-testN}
   \frac{1}{2} \ddt \|\Delta \vt\|^2 + \langle \nabla \Delta K(\vt), \nabla \Delta \vt \rangle
     + \langle \bu \cdot \nabla \vt, \Delta^2 \vt \rangle
     + \langle \vt \Delta \mu, \Delta^2 \vt \rangle
    = \langle |\nabla \bu|^2 + |\nabla \mu|^2, \Delta^2 \vt \rangle.
\end{equation}
At this point, we notice that for a generic function $\phi$
\[
  \|\phi\|^2
    = \|\phi - \phi_{\Omega} + \phi_{\Omega}\|^2
     = \|\phi - \phi_{\Omega}\|^2 +  |\Omega| \phi_{\Omega}^2
     \stackrel{\eqref{Friedr}}{\le}  c_{\Omega} \|\nabla \phi\|^2 +  |\Omega| \phi_{\Omega}^2.
\]
On the other hand,
\[
  \|\nabla \phi\|^2
    = \langle \phi - \phi_{\Omega}, - \Delta \phi \rangle
    \le \|\phi - \phi_{\Omega}\| \, \|\Delta \phi\|
    \le \frac{1}{2 c_{\Omega}} \|\phi - \phi_{\Omega}\|^2
       + c \|\Delta \phi\|^2
    \stackrel{\eqref{Friedr}}{\le} \frac{1}{2} \|\nabla \phi\|^2 + c \, \|\Delta \phi\|^2.
\]
Moreover, we will use repeatedly in the sequel the following facts:
\begin{equation}\label{norma-equiv}
  \|\Delta \phi\|^2
   \le \|\phi\|^2_{H^2(\Omega)}
    \le |\Omega| \phi^2_{\Omega} + c \|\Delta \phi\|^2.
\end{equation}
We analyze all terms in \eqref{calore-testN}. We have
\[
  \Delta K(\vt) = \dive (\kappa(\vt) \nabla \vt) = \kappa'(\vt) |\nabla \vt|^2 + \kappa(\vt) \Delta \vt.
\]
Therefore,
\[
\nabla \Delta K(\vt) = \kappa''(\vt)  |\nabla \vt|^2 \nabla \vt 
+ 2 \kappa'(\vt) \nabla \vt \cdot \nabla^2 \vt 
+ \kappa'(\vt) \nabla \vt \Delta \vt + \kappa(\vt) \nabla \Delta \vt.
\]
This permits us to deal with the second term in \eqref{calore-testN}:
\begin{align*}
  \langle \nabla \Delta K(\vt), \nabla \Delta \vt \rangle
   & = \int_{\Omega} \kappa(\vt) |\nabla \Delta \vt|^2
     + \langle  \kappa''(\vt)  |\nabla \vt|^2 \nabla \vt
     + 2 \kappa'(\vt) \nabla \vt \cdot \nabla^2 \vt
     + \kappa'(\vt) \nabla \vt \Delta \vt, \nabla \Delta \vt \rangle\\
   & =: \int_{\Omega} \big(\sqrt{\kappa(\vt)} |\nabla \Delta \vt| \big)^2 + \Theta_1 + \Theta_2 + \Theta_3.
\end{align*}
To estimate the \rhs\ let us assume, for simplicity, that $q\ge 4$ (the opposite situation
being in fact easier). Then we first have
\begin{align*}
  |\Theta_1| & \le q(q-1) \int_{\Omega} \vt^{q-2} |\nabla \Delta \vt| |\nabla \vt|^3
   = q(q-1) \int_{\Omega} \vt^{q/2} |\nabla \Delta \vt| (\vt^{\frac{q}{2} - 2}  |\nabla \vt|^{\frac{1}{2}}) |\nabla \vt|^{\frac{5}{2}} \\
  & \le C \|\sqrt{\kappa(\vt)} |\nabla \Delta \vt|\| \, \| | \nabla K(\vt)|^{\frac{1}{2}} \|_{L^{4}(\Omega)}  \, \| |\nabla \vt|^{\frac{5}{2}} \|_{L^{4}(\Omega)}\\
  & \le C \|\sqrt{\kappa(\vt)} |\nabla \Delta \vt|\| \, \|\nabla K(\vt)\|^{\frac{1}{2}} \, \|\nabla \vt\|^{\frac{5}{2}}_{L^{10}(\Omega)}
   \stackrel{\eqref{dal:main2}, \eqref{dis:Brezis}}{\le} C \|\sqrt{\kappa(\vt)} |\nabla \Delta \vt|\|
      \left( \|\nabla \vt\|^{\frac{1}{5}} \, \|\vt\|^{\frac{4}{5}}_{H^2(\Omega)} \right)^{\frac{5}{2}} \\
 & \le C \|\sqrt{\kappa(\vt)} |\nabla \Delta \vt|\| \,\|\vt\|^2_{H^2(\Omega)}
   \le \varepsilon \|\sqrt{\kappa(\vt)} |\nabla \Delta \vt|\|^2 + C_{\varepsilon} \|\vt\|^4_{H^2(\Omega)}.
\end{align*}
On the other hand,
\begin{align*}
  |\Theta_2| & \le 2\int_{\Omega} q \vt^{q-1} |\nabla \vt| \, |\nabla^2 \vt| \, |\nabla \Delta \vt| \\
  & \le C \| \vt^{q/2} |\nabla \Delta \vt| \| \, \| \vt^{(q-2)/2} | \nabla \vt|^{1/2} \|_{L^4(\Omega)}
   \, \| | \nabla \vt|^{1/2} \|_{L^8(\Omega)} \, \| \nabla^2 \vt \|_{L^8(\Omega)}\\
  &\stackrel{\eqref{dis:Linfty}, \eqref{dis:L4}}{\le} C \|\sqrt{\kappa(\vt)} |\nabla \Delta \vt| \| \, \|\nabla K(\vt)\|^{\frac{1}{2}} \|\nabla \vt\|^{\frac{1}{4}}
        \, \|\vt\|_{H^2(\Omega)}^{\frac{1}{2}} \, \|\vt\|^{\frac{3}{4}}_{H^3(\Omega)}\\
  & \stackrel{\eqref{rego:Kvt:new}}{\le} C \|\sqrt{\kappa(\vt)} |\nabla \Delta \vt| \|^{\frac{7}{4}} \|\vt\|^{\frac{1}{2}}_{H^2(\Omega)}
   \le \varepsilon \|\sqrt{\kappa(\vt)} |\nabla \Delta \vt| \|^2
     + C_\varepsilon \|\vt\|_{H^2(\Omega)}^4
\end{align*}
and it is apparent that the term $\Theta_3$ can be controlled analogously.

We now deal with the third term in \eqref{calore-testN}. Due to the fact that
\begin{equation}\label{wellknown}
  \nabla (\bu \cdot \nabla \vt) = \nabla \vt  \cdot \nabla \bu + (\bu \cdot \nabla) \nabla \vt,
\end{equation}
we then have
\begin{align*}
  \langle \bu \cdot \nabla \vt, \Delta^2 \vt \rangle
    & = - \langle \nabla (\bu \cdot \nabla \vt), \nabla \Delta \vt \rangle
      \stackrel{\eqref{wellknown}}{=} - \langle \nabla \vt  \cdot \nabla \bu, \nabla \Delta \vt \rangle + \langle (\bu \cdot \nabla) \nabla \vt, \nabla \Delta \vt \rangle\\
  & \le 2 \varepsilon \|\nabla \Delta \vt\|^2 + C_{\varepsilon}\|\nabla \vt\|^2_{L^4(\Omega)} \|\nabla \bu\|^2_{L^4(\Omega)} + C_{\varepsilon} \int_{\Omega} |\bu|^2 \, |\nabla^2 \vt|^2 \\
  & \le 2 \varepsilon \|\nabla \Delta \vt\|^2 + C_{\varepsilon} \|\vt\|_{V} \|\vt\|_{H^2(\Omega)} \|\bu\|_{V} \|\bu\|_{H^2(\Omega)}
      \stackrel{\eqref{dis:Brezis}}{+} C_{\varepsilon}  \|\bu\|^2_{L^6(\Omega)} \|\nabla^2 \vt\|^{4/3} \|\nabla^3 \vt\|^{2/3}\\
  & \le 3 \varepsilon \|\sqrt{\kappa(\vt)} \nabla \Delta \vt\|^2 \stackrel{\eqref{dis:Brezis}}{+} C_{\varepsilon} \|\nabla^2 \vt\|^{2} + C_\varepsilon,
\end{align*}
where we used (viii) of Lemma~\ref{ascpt-le} together with (i) and (ii) of Subsection~\ref{part1}.
Notice in particular that at this level the constants $C$ (and $C_\varepsilon$)
are allowed to depend also on the $H^2$-norm of $\vu$,
which has been estimated in~(viii) far from $0$. Hence, here (and below) $C$ is of the form
$C=C(t)=Q(\| z_0 \|_{\calV^r},t^{-1})$.

Next, concerning the fourth term in \eqref{calore-testN} we have
\[
  |\langle \vt \Delta \mu, \Delta^2 \vt \rangle|
    \le \int_{\Omega} |\Delta \mu| \, |\nabla \vt|\,|\nabla \Delta \vt|
      + \int_{\Omega} |\vt| \, |\nabla \Delta \mu| \, |\nabla \Delta \vt|
      =: \Theta_4 + \Theta_5,
\]
where
\begin{align*}
  \Theta_4 & := \int_{\Omega} |\Delta \mu| \, |\nabla \vt|\,|\nabla \Delta \vt|
    \le \|\nabla \Delta \vt\| \, \|\Delta \mu\|_{L^6(\Omega)} \, \|\nabla \vt\|_{L^3(\Omega)}\\
  & \le C \|\nabla \Delta \vt\| \,\|\Delta \mu\|^{\frac{1}{3}} \, \|\nabla \Delta \mu\|^{\frac{2}{3}}  \, \|\nabla \vt\|^{\frac{2}{3}} \, \|\vt\|_{H^2(\Omega)}^{\frac{1}{3}}\\
  & \stackrel{\eqref{dis:BrezziGilardi}}{\le} C  \|\nabla \Delta \vt\| \, \|\nabla \mu\|^{\frac{1}{6}} \, \|\nabla \Delta \mu\|^{\frac{2}{3}
    + \frac{1}{6}} \|\nabla \vt\|^{\frac{2}{3}} \, \|\vt\|_{H^2(\Omega)}^{\frac{1}{3}}\\
  & \le C \|\nabla \Delta \vt\| \, \|\nabla \Delta \mu\|^{\frac{5}{6}} \, \|\vt\|^{\frac{1}{3}}_{H^2(\Omega)}\\
  & \le \varepsilon \|\sqrt{\kappa(\vt)} \nabla \Delta \vt\|^2 + C_\varepsilon \|\nabla \Delta \mu\|^2  +C_\varepsilon \|\vt\|^4_{H^2(\Omega)},
\end{align*}
where we also used the uniform control \eqref{dal:main2} of $\mu$.
On the other hand,
\begin{align*}
  \Theta_5 & := \int_{\Omega} |\vt| \, |\nabla \Delta \mu| \, |\nabla \Delta \vt| \\
  & \le \|\sqrt{\kappa(\vt)}  |\nabla \Delta \vt| \| \, \|\nabla \Delta \mu\| \\
  & \le \varepsilon \|\sqrt{\kappa(\vt)} |\nabla \Delta \vt| \|^2
    +  C_{\varepsilon}\|\nabla \Delta \mu\|^2.
\end{align*}
Coming to the very last terms in \eqref{calore-testN}, we eventually have
\begin{align*}
  |\langle |\nabla \bu|^2, \Delta^2 \vt \rangle |
    & \le C \int_{\Omega} |\nabla^2 \bu \, \nabla \bu \, \nabla \Delta \vt|
      \le C \|\nabla^2 \bu\|_{L^4(\Omega)} \, \|\nabla \bu\|_{L^4(\Omega)} \, \|\nabla \Delta \vt\| \\
    & \le C \|\nabla^2 \bu\|^{1/2}_{V} \, \|\nabla^2 \bu\| \, \|\nabla \bu\|^{1/2} \, \|\nabla \Delta \vt\|
     \le \varepsilon \|\sqrt{\kappa(\vt)} \nabla \Delta \vt\|^2
      +  C_{\varepsilon}  \|\nabla^3 \bu\|^2,
\end{align*}
where we used \eqref{lemma4.3} and the previous estimate (viii). Similarly, we also get
\begin{align*}
  |\langle |\nabla \mu|^2, \Delta^2 \vt \rangle |
    & \le C \int_{\Omega} | \nabla^2 \mu \, \nabla \mu \, \nabla \Delta \vt|
      \le C \|\nabla^2 \mu\| \, \|\nabla \mu\|_{L^{\infty}(\Omega)} \, \|\nabla \Delta \vt\| \\
  & \le C \| \mu\|_{H^2} \| \mu\|^{1/2}_{H^3}\| \mu\|^{1/2}_{H^1} \, \|\nabla \Delta \vt\|
    \le C \| \mu\|_{H^3}\| \mu\|_{H^1} \, \|\nabla \Delta \vt\|\\
 & \le \varepsilon \|\sqrt{\kappa(\vt)} \nabla \Delta \vt\|^2
        +  C_{\varepsilon}  \|\mu\|^2_{H^3}.
\end{align*}
Collecting the above estimates, and taking $\varepsilon$ small enough,
we finally arrive at
\begin{equation}\label{asym:31}
  \ddt \|\Delta \vt\|^2
   + \|\sqrt{\kappa(\vt)} |\nabla\Delta\vt|\|^2
  \leq C \|\vt\|^2_{H^2} \, \|\Delta\vt\|^2 + C ( 1 + \|\mu\|^2_{H^3} + \|\bu\|^2_{H^3} + \|\vt\|^2_{H^2} ),
\end{equation}
where, as said, $C=C(t)=Q(\| z_0 \|_{\calV^r},t^{-1})$. Then, similarly as before,
we apply Gronwall's lemma on time intervals of the form $(s,s+2)$ starting from
suitable $s$ such that $\|\Delta \vt(s)\|^2$ is controlled.
Indeed, from \eqref{dal:main3} one can easily deduce that for any $t\ge 0$ and
any $\tau>0$ there exists $s\in [t,t+\tau]$ such that
$\| \vt(s) \|_{H^2(\Omega)} \le Q(\| z_0 \|_{\calV^r},\tau^{-1})$.
Then, noting that the \rhs\ of \eqref{asym:31}
is controlled by means of \eqref{dal:main3} and~(viii), we deduce~(ix).

\smallskip

\noindent%
$\bullet$ \, \textsc{proof of \textnormal{(x)}}.
To derive the  additional regularity of $\varphi$, we first prove that
\begin{equation}\label{stimavarphit}
  \| \varphi_t \|_{L^{\infty}(\tau, +\infty; H)}
   \le Q(\| z_0 \|_{\calV^r},\tau^{-1}).
\end{equation}
To this aim, we differentiate in time \eqref{CH1}-\eqref{CH2}, obtaining
\begin{align}
  & \varphi_{tt} + \vu_t \cdot \nabla \varphi + \vu \cdot \nabla \varphi_t = \Delta \mu_t, \label{CH1bis}\\
  & \mu_t = - \Delta \varphi_t + F''(\varphi) \varphi_t - \vt_t. \label{CH2bis}
\end{align}
Then we test \eqref{CH1bis} by $\varphi_t$ and \eqref{CH2bis} by  $\Delta \varphi_t$  and sum the resulting
relations in order to erase the term $\langle \Delta \mu_t, \varphi_t \rangle$. Moreover, noticing that
\[
  \langle \vu \cdot \nabla \varphi_t, \varphi_t \rangle = 0
\]
due to \eqref{incom}, we obtain
\[
  \frac{1}{2} \ddt \|\varphi_t\|^2 + \|\Delta \varphi_t\|^2
   = - \langle \vu_t \cdot \nabla \varphi, \varphi_t \rangle - \langle \vt_t, \Delta \varphi_t \rangle
     + \langle F''(\varphi) \varphi_t, \Delta \varphi_t \rangle.
\]
Let us now notice that
\begin{align*}
  - \int_{\Omega} \vu_t \cdot \nabla \varphi \, \varphi_t
    & \le \|\vu_t\|\, \|\nabla \varphi\| \, \|\varphi_t\|_{L^{\infty}(\Omega)}
     \stackrel{\eqref{dis:Linfty}}{\le} C \|\vu_t\| \, \|\nabla \varphi\| \, \|\varphi_t\|^{1/2} \, \|\Delta \varphi_t\|^{1/2} \\
   & \le \frac{1}{6} \|\Delta \varphi_t\|^2 + C \, \|\varphi_t\|^2 + C \, \|\vu_t\|^2,
\end{align*}
thanks also to \eqref{lemma4.3}. On the other hand,
\[
  - \int_{\Omega} \vt_t \Delta \varphi_t
     \le C \|\vt_t\|^2
       + \frac{1}{6} \|\Delta \varphi_t\|^2.
\]
Finally, recalling \eqref{hp:F3},
\[
  \int_{\Omega} F''(\varphi) \varphi_t \, \Delta \varphi_t
   \le C (1 + \|\varphi\|^{p_F}_{L^{\infty}(\Omega)}) \|\varphi_t\| \, \|\Delta \varphi_t\|
   \le \frac{1}{6} \|\Delta \varphi_t\|^2
     + C \|\varphi_t\|^2.
\]
Summarizing,
\[
  \ddt \|\varphi_t\|^2 + \|\Delta \varphi_t\|^2
    \le C (1 + \|\varphi_t\|^2 + \|\vu_t\|^2 + \|\vt_t\|^2 ).
\]
Hence, \eqref{stimavarphit} follows from Gronwall's lemma using the information
\eqref{dal:main3} (and estimating the boundary layer near~$0$ as before).

We now claim that this is enough to get the desired estimate (x):
indeed, from \eqref{CH1} we immediately obtain $\Delta \mu \in L^{\infty}(\tau, +\infty; H)$
(with a quantitative bound on that norm) because $\vu$ and $\nabla\fhi$ are bounded uniformly.
By elliptic regularity, this implies $\mu \in L^{\infty}(\tau, \infty; H^2(\Omega))$.
Then, we also interpret \eqref{CH2} as an elliptic problem, namely
we have $\Delta \varphi = g $, where $g \in L^{\infty}(\tau, \infty; H^2(\Omega))$
thanks to (ix), \eqref{hp:F4} and \eqref{lemma4.3}. Hence, we  deduce
that $\varphi \in L^{\infty}(\tau, + \infty; H^4(\Omega))$ and, more precisely,
the quantitative bound
$ \| \varphi \|_{L^{\infty}(\tau, + \infty; H^4(\Omega))} \le Q(\| z_0 \|_{\calV^r},\tau^{-1})$.
This concludes the proof of Lemma~\ref{ascpt-le}.
\end{proof}


\subsection{$\omega$-limits and dissipativity}
\label{sub:omega}

On account of the previous estimates, we can now show
that the dynamical process $S(t)$ associated to ``stable solutions'' is
{\sl asymptotically compact}. Namely, we have the
\bete\label{ascpt-te}
 Let $\{z\zzn\}$ be a bounded sequence in $\calV^r$ and let $z_n(t)=S(t) z\zzn$
 be the unique stable solution emanating from $z\zzn$. Then, for any sequence $t_n\nearrow +\infty$,
 there exist an element $\zeta\in\calV^r$ and a (nonrelabelled) subsequence of
 $t_n$ such that
 $$
    S(t_n) z\zzn \to \zeta \quext{in }\calV^r.
 $$
\ente
\begin{proof}
Thanks to Lemma~\ref{ascpt-le}, the sequence $\{ S(t_n)z\zzn \}$ is bounded in $\calW$.
Hence, recalling \eqref{norW} it is clear that, for some $\zeta=(\vu,\fhi,\vt)\in \calW$,
there holds (all the following relations are intended to hold up to the extraction
of nonrelabelled subsequences)
\begin{equation}\label{weakW0}
  S(t_n)z_{0,n} \to \zeta \quext{weakly in }H^2(\Omega)\times H^4(\Omega) \times H^2(\Omega),
\end{equation}
whence, by Rellich's theorem,
\[
  S(t_n)z_{0,n} \to \zeta \quext{strongly in }H^{1+r}(\Omega)\times H^3(\Omega) \times V,
   ~~\text{and uniformly in $\Omega$}.
\]
It is then easy to check that we also have $K(\vt_n)\to K(\vt)$ in $V$.
Moreover, $\{1/\vt_n\}$ is bounded in $W^{1,1}(\Omega)\cap L^4(\Omega)$. Hence,
using also pointwise convergence, we obtain that $1/\vt_n\to1/\vt$ strongly
in $L^1(\Omega)$ (actually, something more is true), which concludes the proof.
Note that, in fact, our argument shows that
the metric space embedding $\calW\subset \calV^r$ is compact.
\end{proof}
\beos\label{oss:W}
 In the sequel, with some abuse of notation, when a sequence $\{\zeta_n\}=\{(\vu_n,\fhi_n,\vt_n)\}\subset \calW$ satisfies
 \begin{equation}\label{weakW}
   \zeta_n \to \zeta \quext{weakly in }H^2(\Omega)\times H^4(\Omega) \times H^2(\Omega),
    \qquad 1/\vt_n \to 1/\vt \quext{weakly in }L^4(\Omega),
 \end{equation}
 for some $\zeta=(\bu,\fhi,\vt)\in H^2(\Omega)\times H^4(\Omega) \times H^2(\Omega)$ with $1/\vt \in L^4(\Omega)$,
 we will speak of ``weak convergence in $\calW$''. This is in fact the type of information
 we obtain from asymptotic compactness. As before, by Rellich's theorem, \eqref{weakW}
 implies strong convergence of the components in weaker norms.
\eddos
\noindent
In order to show existence of the global attractor in the case when the
spatial mean of the velocity is $\bzero$, we will combine the above property
with the {\sl point dissipativity} of the dynamical process generated by stable solutions.
Namely, we will prove that there exists a bounded set $\calB\subset \calV^r$ such that
for any $z_0\in \calV^r$ there exists $T=T(z_0)$ such that $S(t)z_0\in \calB$ for
any $t\ge T$. In other words, $\calB$ is a {\sl pointwise absorbing}\/ set.
For many evolutionary system, this kind of property (and often a stronger one,
i.e., the existence of a {\sl uniformly absorbing}\/ set) can be proved directly
by showing that suitable norms of the solutions satisfy a dissipative differential
inequality. Here, however, due to the presence of the quadratic source terms
in \eqref{calore} and to the physical constraints corresponding to conservation
of mass, momentum, and total energy, it seems difficult to derive such a type
of inequality. For this reason we will work in an alternative way, proving first
of all that any trajectory has a nonempty $\omega$-limit set, which
is contained in a {\sl proper subclass}\/ $\calS_0$
of the family of solutions of the stationary problem associated to
\eqref{incom}-\eqref{calore}. Pointwise dissipativity will follow from a precise characterization of $\mathcal{S}_0$.
%
In order to start with this program, we preliminarily observe that the
stationary problem associated to our system has the form
\begin{align}\label{incom-s}
  & \dive \vu = 0, \\
 \label{ns-s}
  & \vu \cdot \nabla \vu + \nabla p
    = \Delta \vu - \dive ( \nabla \fhi \otimes \nabla \fhi ),\\
 \label{CH1-s}
  & \vu \cdot \nabla \fhi = \Delta \mu, \\
 \label{CH2-s}
  & \mu = - \Delta \fhi + F'(\fhi) - \vt, \\
 \label{calore-s}
  & \vu \cdot \nabla \vt + \vt \Delta \mu
    - \dive(\kappa(\vt)\nabla \vt) = | \nabla \vu |^2 + | \nabla \mu |^2,
\end{align}
naturally complemented with periodic boundary conditions. Letting $\calS$ be
the set of solutions of \eqref{incom-s}-\eqref{calore-s}, we will now
show that any element of the $\omega$-limit set of a stable
solution of the evolution system not only belongs to $\calS$,
but it also satisfies additional structure properties in such a way that it solves
in fact a much simpler system. This is the object of the following
\bele\label{lem:omega}
 Let $z_0=(\vu_0,\fhi_0,\vt_0)\in \calV^r$ and let
 \begin{equation}\label{vincoli}
    \bm :=(\vu_0)\OO=\bzero, \qquad
    m :=(\fhi_0)\OO, \qquad
    M := \frac12 \| \vu_0 \|^2 + \frac12 \| \nabla\fhi_0 \|^2 + \io (F(\fhi_0)+\vt_0)
 \end{equation}
 be the associated conserved quantities. Let $z(t):=S(t)z_0$ and let $t_n\nearrow\infty$.
 Then, there exist a nonrelabelled subsequence of $t_n$ and
 $z_\infty=(\bzero,\fhi_\infty,\vt_\infty)\in \calV^r$
 such that
 \begin{equation}\label{o:lim}
   z(t_n)=S(t_n)z_0 \to z_\infty \quext{in }\calV^r
 \end{equation}
 (and, in fact, ``weakly'' in $\calW$).
 Moreover, there exists $\mu_\infty\in V$ such that
 \begin{equation}\label{o:lim2}
   \mu(t_n) \to \mu_\infty \quext{strongly in }V.
 \end{equation}
 In addition to that, if $z_\infty=(\vu_\infty,\fhi_\infty,\vt_\infty)$ (with the
 auxiliary variable $\mu_\infty$) is\/ {\rm any}
 limit point of a sequence $S(t_n)z_0$ in the sense specified above, then we have that
 $\vu_\infty$, $\mu_\infty$ and $\vt_\infty$ are constant
 functions with respect to the space variables with
 $\vu_\infty\equiv \bm\equiv \bzero$ and the stationary system reduces
 to the single equation
 \begin{align} 
  \label{CH2-red}
   & -\Delta\fhi_\infty + F'(\fhi_\infty) = \mu_\infty + \vt_\infty = \io F'(\fhi_\infty).
 \end{align}
 Finally, there exists a constant $C_\infty>0$ depending only on the conserved values $\bm=\bzero$,
 $m$ and $M$ (and hence independent of the specific choice of $z_0$) such that
 \begin{align}\label{regostaz}
   & |\mu_\infty| + | \vt_\infty | + \| \fhi_\infty \|_{H^4(\Omega)}
     \le C_\infty.
 %
 \end{align}
\enle
\begin{proof}
We start pointing out that all convergence relations
in the proof will be intended to hold up to the extraction of
nonrelabelled subsequences of $n$. That said, we set $z_n(t)=S(t)z(t_n)$, for
$t\in (0,1)$. Namely, we interpret $z(t_n)\in \calV^r$ as an ``initial'' datum and
consider the corresponding ``stable'' solution $z_n$ over the time interval $(0,1)$. Then, by
Lemma~\ref{ascpt-te} (asymptotic compactness), we have
\begin{equation}\label{om:11}
  z(t_n)\to z_\infty=(\vu_\infty,\fhi_\infty,\vt_\infty) \quext{in }\calV^r
   ~~\text{(and weakly in $\calW$)}
\end{equation}
for some $z_\infty\in \calV^r$. As a consequence, it is immediate to check that
\begin{equation}\label{vincolin}
   (\vu_\infty)\OO=\bm=\bzero, \quad
   (\fhi_\infty)\OO=m, \quad
    \frac12 \| \vu_\infty \|^2 + \frac12 \| \nabla\fhi_\infty \|^2 + \io (F(\fhi_\infty)+\vt_\infty) = M.
\end{equation}
Moreover, looking at the proof of~(x) in Lemma~\ref{ascpt-le}, we can easily
realize that $\mu(t_n)$ is bounded in $H^2(\Omega)$. Hence, \eqref{o:lim2}
holds.

Let us now look at the behavior of $z_n$ over the time interval $(0,1)$. Since
$z(t_n)$ is a convergent (sub)sequence in $\calV^r$, by weak sequential stability
of solutions (cf.~\cite{ERS2} for details),
it readily follows that, correspondingly, $z_n$ converges
in a proper way to a limit $\barz=(\barvu,\barfhi,\bar{\vartheta})$
and $\mu_n=\mu(t_n)$ converges to $\barmu$, where $\barz,\barmu$
are defined over $(0,1)\times \Omega$ and solve system \eqref{incom}-\eqref{calore}
with the periodic boundary conditions and the initial conditions
\begin{equation}\label{om:12}
  \barvu|_{t=0} = \vu_\infty, \qquad
   \barfhi|_{t=0} = \fhi_\infty, \qquad
   \barvt|_{t=0} = \vt_\infty.
\end{equation}
Let us now prove that $\barz$ is independent of time.
First of all, in view of the dissipation integrals \eqref{diss-int}, it is clear that
\begin{equation}\label{om:14}
   \nabla \vu_n, \nabla \vt_n, \nabla \mu_n \to 0 \quext{strongly in }L^2(0,1;H).
\end{equation}
Then, let us take $\xi\in H^2(\Omega)\subset L^\infty(\Omega)$
with $\| \xi \|_{H^2(\Omega)}\le 1$ and test \eqref{calore},
written for $z_n$ on the time span $(0,1)$, by $\xi$. Then,
performing standard manipulations and subsequently taking the supremum
with respect to such test function $\xi$, we infer
\begin{align*}
  \| \dt \vt_n \|_{H^2(\Omega)'}
   & \le \| \vu_n \| \, \| \nabla \vt_n \|
    + \| \nabla \mu_n \| \, \| \nabla \vt_n \|
    + \| \nabla\mu_n \| \, \| \vt_n \|_{L^4(\Omega)}\\
  & \qquad \qquad
    + \| \kappa(\vt_n) \|_{L^4(\Omega)} \, \| \nabla \vt_n \|
    + \| \nabla \mu_n \|^2
    + \| \nabla \vt_n \|^2\\
  & \le C\big( \| \nabla \mu_n \|^2
    + \| \nabla \vt_n \|^2
    + \| \nabla \vt_n \|
    + \| \nabla \mu_n \| \big),
\end{align*}
where $C$ may also depend on the uniformly estimated norms of the other quantities
(cf.~\eqref{dal:main2}, \eqref{dal:main3}).
Squaring and integrating over $(0,1)$, we deduce
\[
   \| \dt \vt_n \|_{L^2(0,1;H^2(\Omega)')}^2
 \le C \big( 1 + \| \nabla \vt_n \|_{L^\infty(0,1;H)}^2
         + \| \nabla \mu_n \|_{L^\infty(0,1;H)}^2 \big)
  \big( \| \nabla \vt_n \|_{L^2(0,1;H)}^2
         + \| \nabla \mu_n \|_{L^2(0,1;H)}^2 \big),
\]
whence, by \eqref{dal:main2} and \eqref{om:14},
$\dt \vt_n$ goes to $0$ strongly in $L^2(0,1;H^2(\Omega)')$
and consequently $\barvt\equiv \vt_\infty$ over $(0,1)$.
Moreover, using \eqref{om:14} again,
we see that $\vt_\infty$ does not depend on space
variables.

Next, we consider the behavior of the other variables, which is simpler to describe.
First of all, notice that, by \eqref{om:14}, the assumption $\bm=\bzero$, and the
Friedrichs inequality, we deduce that $\vu_n\to \bzero$
strongly in $L^2(0,1;V)$. More precisely, since $\vu_n$ converges
uniformly as a consequence of \eqref{dal:main3}, (viii) and the
Aubin-Lions lemma, it turns out that $\barvu\equiv \vu_\infty = \vm = \bzero$.
Moreover, we easily deduce from \eqref{CH1} that
\[
  \| \dt \fhi_n \|_{L^2(0,1;V')}
   \le C \big( \| \nabla \mu_n \|_{L^2(0,1;H)}
     + \| \vu_n \|_{L^2(0,1;V)} \, \| \fhi_n \|_{L^\infty(0,1;V)} \big),
\]
whence, using \eqref{dal:main2} and \eqref{om:14}, we deduce that $\dt \fhi_n \to 0$
strongly in $L^2(0,1;V')$. Consequently, $\barfhi$ is also independent of time
and, more precisely, $\barfhi\equiv \fhi_\infty$ over $(0,1)$.
Finally, concerning the auxiliary variable $\mu_n$,
from \eqref{om:14} we deduce that $\barmu$ is independent of
space variables. Note that, a priori, it is not clear whether $\barmu(t)$
is also independent of $t\in(0,1)$. On the other hand, taking $n\nearrow \infty$
in \eqref{CH2} and collecting the previous information, we deduce
that, a.e.~in~$(0,1)$ (in fact, everywhere in $[0,1]$ since one
can easily realize that also $\mu_n$ converges uniformly), there holds
\[
  \barmu = - \Delta \barfhi + F'(\barfhi) - \barvt
   \equiv - \Delta \fhi_\infty + F'(\fhi_\infty) - \vt_\infty.
\]
Then, in view of the fact that the \rhs\ is independent of time, so
is also the \lhs. Hence $\barmu\equiv\mu_\infty$ and
the above relation reduces to \eqref{CH2-red} (in particular
the second equality follows by integration over $\Omega$).

It remains to prove \eqref{regostaz}.
First of all, we observe
that, thanks to \eqref{vincolin} and \eqref{czero}, we have
\[
  | \vt_\infty | + \| \fhi_\infty \|_V \le c_{M,m}
\]
(again, we control the full $V$-norm of $\fhi_\infty$
thanks to the conservation of mass). By Sobolev's embeddings, we
then also deduce that $\| F'(\fhi_\infty) \|_{L^1(\Omega)} \le c_{M,m}$.
Hence, from \eqref{CH2-red},
\[
  | \mu_\infty |
  \le | \mu_\infty + \vt_\infty | + | \vt_\infty |
  \le \| F'(\fhi_\infty) \|_{L^1(\Omega)} + | \vt_\infty |
  \le c_{M,m}.
\]
Next, applying elliptic regularity in \eqref{CH2-red}
(or, equivalently, testing by $(-\Delta)^3\fhi_\infty$)
and exploiting \eqref{hp:F4}, we easily get the $H^4$-control
in \eqref{regostaz}, which concludes the proof.
\end{proof}
\noindent%
To conclude this part, we would like to briefly describe
the case when the initial velocity
$\vu_0$ has a nonzero spatial mean $\bm$. In this situation, thanks
to the periodic boundary conditions, we can actually
perform a change of variables. Namely, we denote
\[
  \tilde{\zeta}(t,x) := \zeta(t, x + t \bm ), \quext{for }
    \zeta = \vu,\fhi,\mu,\vt,p,
\]
and we can easily check that
\[
  \nabla \tilde{\zeta} (t,x) = \nabla \zeta(t, x + t \bm), \quad \qquad
    \tilde{\zeta}_t(t,x) = \zeta_t (t, x + t \bm) + \bm \cdot \nabla \zeta (t, x + t \bm).
\]
Then, if $(\vu,\fhi,\mu,\vt)$ is a stable solution,
the Cahn-Hilliard system \eqref{CH1}-\eqref{CH2} is transformed
into
\begin{align}
  & \tilde{\varphi}_t + (\tilde{\vu} -  \bm) \cdot \nabla \tilde{\varphi} = \Delta \tilde{\mu}, \label{CHtilde1}\\
  & \tilde{\mu} = -\Delta \tilde{\varphi} + F'(\tilde{\varphi}) - \tilde{\vt}. \label{CHtilde2}
\end{align}
Correspondingly, the counterparts of \eqref{ns} and \eqref{calore} are respectively
\begin{align}\label{nstilde}
  & \tilde{\vu}_t + (\tilde{\vu} - \bm) \cdot \nabla \tilde{\vu} + \nabla \tilde{p}
    = \Delta \tilde{\vu} - \dive (\nabla \tilde{\varphi} \otimes \nabla \tilde{\varphi}),\\
 \label{caloretilde}
  & \tilde{\vt}_t + (\tilde{\vu} - \bm) \cdot \nabla \tilde{\vt} + \tilde{\vt} \Delta \tilde{\mu}
  - \dive (\kappa(\tilde{\vt}) \nabla \tilde{\vt}) = |\nabla \tilde{\vu}|^2 + |\nabla \tilde{\mu}|^2
     = |\nabla (\tilde{\vu} - \bm )|^2 + |\nabla \tilde{\mu}|^2.
\end{align}
Next, we observe that \eqref{nstilde} can be also rewritten as
\[
   (\tilde{\vu} - \bm)_t + (\tilde{\vu} - \bm) \cdot \nabla  (\tilde{\vu} - \bm)  + \nabla \tilde{p}
     = \Delta (\tilde{\vu} - \bm)
      - \dive (\nabla \tilde{\varphi} \otimes \nabla \tilde{\varphi})
\]
and the initial conditions may be restated as
\begin{equation}\label{om:60}
  \tilde{\varphi}(0)  = \varphi_0, \qquad
  \tilde{\vt}(0) = \vt_0, \qquad
  (\tilde{\vu} - \bm)(0) = \vu_0 - \bm,
\end{equation}
where, obviously,
\begin{equation}\label{om:61}
  (\tilde\vu - \bm)\OO(t) \equiv (\tilde\vu_0 - \bm)\OO = 0.
\end{equation}
Hence, the variables $\tilde{z}=(\tilde{\vu}-\bm, \tilde{\varphi}, \tilde{\vt})$, with the
auxiliary $\tilde\mu$, constitute a stable solution to \eqref{incom}-\eqref{calore}
with the initial conditions \eqref{om:60}. Hence Lemma~\ref{lem:omega} applies to
$\tilde{z}$, which admits a nonempty $\omega$-limit all of whose elements
belong to $\calS_0$. In particular, if $\{t_n\}$ is a diverging sequence of times, we
deduce for instance that (a subsequence of) $\tilde{\vt}(t_n)$ converges
weakly in $H^2(\Omega)$ (whence uniformly in $\Omega$) to a positive constant
$\vt_\infty$. On the other hand,
\begin{equation}\label{om:62}
  \vt(t_n,x) = \tilde{\vt}(t_n,x-t_n\bm)
   \to \vt_\infty,
\end{equation}
still uniformly in $\Omega$, and the same applies to $\mu(t_n)$.
%
%
%
%
On the other hand, as far as the limit of $\fhi$ is concerned,
the situation is more intricated. Indeed, by asymptotic compactness
we can always construct a (nonrelabelled)
subsequence of $n\nearrow\infty$ such that $\fhi(t_n)$ tends to a suitable
limit $\fhi_\infty$ and simultaneously $\tilde{\fhi}(t_n)$ tends to some
limit $\tilde{\fhi}_\infty$. Moreover, convergence holds in both cases
in $H^3(\Omega)$, hence uniformly in $\Omega$. On the other hand,
if we try to characterize
$\fhi_\infty$ as a stationary state, we have to consider that
\begin{equation}\label{om:63}
  \fhi(t_n,x) = \tilde{\fhi}(t_n,x-t_n\bm),
\end{equation}
whence, for any $\epsilon>0$, there exists $\overline{n}(\epsilon)$
such that
\begin{align}\no
  | \fhi_\infty(x) - \tilde{\fhi}_\infty(x-t_n \bm) |
   & \le \| \fhi_\infty - \fhi(t_n,\cdot) \|_\infty
    + \| \tilde{\fhi}(t_n,\cdot-t_n\bm) - \tilde{\fhi}_\infty(\cdot-t_n \bm) \|_\infty\\
 \label{om:64}
   & \le 2\epsilon \quext{for all }\,n\ge \overline{n}(\epsilon)
\end{align}
and for every $x\in\Omega$. Now, $\tilde{\fhi}_\infty$ is a stationary solution
(more precisely, it belongs to the subclass $\calS_0$)
in view of Lemma~\ref{lem:omega}. Then, in order to deduce a useful information from the
above relation, we may assume that the subsequence of $n$ is chosen in such a way
that, also, $t_n \bm \to x_0$ for some $x_0\in \Omega$ (of course we are
using here the fact that the flat torus is a compact manifold). Hence, we eventually deduce
that
\begin{equation}\label{om:65}
  \fhi_\infty(x) = \tilde{\fhi}_\infty(x-x_0).
\end{equation}
Namely, we have proved that, if $\{t_n\}$ is a diverging sequence of times
and $z$ is a stable solution with $\bm\neq\bzero$, then a suitable subsequence
of $\fhi(t_n)$ tends to a limit $\fhi_\infty$ such that
\begin{equation}\label{om:66}
  - \Delta \fhi_\infty(\cdot+x_0) + F'(\fhi_\infty(\cdot+x_0)) = \mu_\infty + \vt_\infty
\end{equation}
for some $x_0\in \Omega$, where of course $x_0$ may depend on the chosen subsequence of $t_n$,
and where the constants $\mu_\infty$, $\vt_\infty$ are characterized as in the lemma.


\subsection{End of proof of Theorem~\ref{teo:att}}
\label{sub:conclu}

Along this section, in view of Remark~\ref{rem:quad}, we will often
intend solutions as a quadruples instead of triples.
So, with a small abuse of language, we will sometimes write
$z=(\vu,\fhi,\mu,\vt)$ in place of $z=(\vu,\fhi,\vt)$.
We also recall that here we just consider the case $\bm=\bzero$.
As in Lemma~\ref{lem:omega}, we will
note as $\calS_0=\calS_0(\bzero,m,M)$ the set of all the
quadruples $(\vu_\infty,\fhi_\infty,\mu_\infty,\vt_\infty)$, with
$\vu_\infty=\bzero\in \RR^2$, $\mu_\infty,\vt_\infty\in \RR$, $\fhi_\infty\in H^4(\Omega)$,
satisfying equation \eqref{CH2-red} as well as the bound
\eqref{regostaz} and the constraints
\begin{equation}\label{vincolin2}
   \vu_\infty \equiv\bzero, \quad
   (\fhi_\infty)\OO=m, \quad
    \frac12 | \vu_\infty |^2 + \frac12 \| \nabla\fhi_\infty \|^2 + \io (F(\fhi_\infty)+\vt_\infty) = M.
\end{equation}
We also recall that, if $z_\infty$ is any element of the $\omega$-limit of a
stable solution, then $z_\infty\in \calS_0$ automatically satisfies
the estimate \eqref{regostaz} with $C_\infty$ as in Lemma~\ref{lem:omega}.
However, the values $\mu_\infty$ and $\vt_\infty$ are
not univocally determined. Namely, even if $\bm=\bzero$, $m$ and $M$ are assigned,
different elements of the $\omega$-limit of a solution emanating from
a datum $z_0\in\calV^r_{\bzero,m,M}$ may solve \eqref{CH2-red} for different values
of $\mu_\infty$ and $\vt_\infty$. Physically speaking, the limit value
$\vt_\infty$ of the $\vt$-component of a trajectory tells us how much chemical
and kinetic energy is converted into heat. This does not just depend on the
initial value of the energy and of the mass, but also, for instance,
on how far the initial value $\fhi_0$ is from the chemical equilibrium.
For the same reason, we expect that, in general, we cannot obtain
a {\sl lower}\/ bound for $\vt_\infty$ holding uniformly
for all initial data lying in $\calS_0=\calS_0(\bzero,m,M)$.
In other words, it may happen that, for fixed values of the mass $m$ and of the energy
$M$, there exist initial data in $\calV^r_{\bzero,m,M}$ for which the ``limit temperature''
$\vt_\infty$ may be arbitrarily close to $0$. This may in principle happen
if the initial temperature is also close to $0$ and the initial chemical configuration
is so favorable that no energy (or almost no energy) is converted into heat.

So, it is to avoid this situation that we need to restrict ourselves to those
configurations for which the initial entropy is greater than some assigned value $-R$,
namely, for initial data lying in $\calV^{r,R}:=\{z\in \calV^r:~( - \log \vt)\OO \le R\}$.
Then we can first prove that $\calV^{r,R}$ is invariant for $S(t)$:
\bele\label{lem:omega2}
 Let $z_0\in \calV^{r,R}$. Then,
 $z(t)=S(t)z_0 \in \calV^{r,R}$ for all $t\ge 0$. Moreover,
 for any element $(\vu_\infty,\fhi_\infty,\mu_{\infty},\vt_\infty)$  of the
 $\omega$-limit of $z(t)$ we have
 \begin{equation}\label{regostaz2}
    \vt_\infty \ge e^{-R}.
 \end{equation}
\enle
\begin{proof}
The invariance property follows from the entropy inequality \eqref{controllo_log}.
Then, by Jensen's inequality we also have
\[
  -\log( \vt(t)\OO )
  \le (- \log \vt(t) )\OO \le R \quad \perogni t\ge 0,
\]
whence, letting $t_n\nearrow \infty$ in such a way that $z(t_n)$ tends to 
an element of the $\omega$-limit, the second assert follows.
\end{proof}
\noindent%
We can now complete the proof of Theorem~\ref{teo:att}. Actually, thanks to
the above invariance property, if $z_0\in \calV^{r,R}_{\bzero,m,M}$,
any element $z_\infty$ of its $\omega$-limit set not only lies in $\calS_0$, but
it also satisfies $z_\infty\in\calV^{r,R}_{\bzero,m,M}$. We can then set
$\calS_0^R:=\calS_0\cap \calV^{r,R}_{\bzero,m,M}$. As a consequence, the limit
temperature satisfies \eqref{regostaz2}, namely, it is bounded from below by
a quantity that only depends on the phase space and is
actually independent of the specific choice of the initial datum.

Thanks to this fact, we may prove existence of the global attractor.
As remarked before, we will combine the {\sl asymptotic compactness}\/
of the semiflow $S(t)$ proved in
Theorem~\ref{ascpt-te} with the following {\sl point dissipativity}\/ property:
\bete\label{pd-te}
 Let $m,M,R\in \RR$ be assigned. Then there exists a bounded subset
 $\calB=\calB(\bzero,m,M)\subset \calV^{r,R}_{\bzero,m,M}$ such that, for any
 $z_0=(\vu_0,\fhi_0,\vt_0)\in \calV^{r,R}_{\bzero,m,M}$, there exists
 $T_0=T_0(z_0)$ such that $S(t)z_0 \in \calB$ for all $t\ge T_0$.
\ente
\begin{proof}
As usual, we proceed by contradiction. Note that,
by Lemmas~\ref{lem:omega}, \ref{lem:omega2},
we have
\begin{equation}\label{regostar}
  | \mu_\infty| + | \vt_\infty | + \| \fhi_\infty \|_{H^4(\Omega)}
    \le C_\infty, \qquad \vt_\infty \ge e^{-R}
\end{equation}
for any $z_\infty=(\vu_\infty,\fhi_\infty,\mu_\infty,\vt_\infty)$ belonging
to $\calS_0^R$. We introduce $\calB$ as a sort of neighbourhood
of $\calS_0^R$ in $\calV^r_{\bzero,m,M}$.
Namely, we define $\calB$ as the set of those functions $(\vu,\fhi,\vt)\in \calV^r$
such that $\vu\OO=\bzero$, $\fhi\OO=m$, $\calE(\vu,\fhi,\vt)=M$ (recall that $\mu$ plays the role of an auxiliary variable),
and there exists $(\vu_{\infty},\fhi_\infty,\mu_\infty,\vt_\infty)\in \calS_0^R$
such that \eqref{vincolin2} holds together with
\begin{equation}\label{om:*3}
   \| \vu \|_{H^{1+r}(\Omega)}
    + \| \fhi - \fhi_\infty \|_{H^{3}(\Omega)} \le 1,
  \qquad \| \vt - \vt_\infty \|_{H^{3/2}(\Omega)}
  \le c_*,
\end{equation}
where $c_*$ is chosen in the following way: denoting as $c_\Omega$ the embedding
constant of $H^{3/2}(\Omega)$ into $C(\Omega)$ we ask that
$c_* c_\Omega \le \frac{e^{-R}}4$ in such a way that if $\vt$
is the last component of an element $z\in \calB$, then
for every $x\in \Omega$ there holds
\begin{equation}\label{dist:00}
   \vt(x) \ge \vt_\infty - | \vt(x) - \vt_\infty |
    \ge e^{-R} - c_\Omega \| \vt - \vt_\infty \|_{H^{3/2}(\Omega)}
    \ge \frac34 e^{-R}.
\end{equation}
Hence, $\vt$ is separated from $0$ in the uniform norm.
Let us now assume, by contradiction, that there exist $z_0\in \calV^{r,R}_{\bzero,m,M}$
and a sequence $\{t_n\}$ with $t_n\nearrow\infty$ such that $S(t_n)z_0\not\in \calB$
for any $n\in\NN$. By asymptotic compactness there exists a (nonrelabelled)
subsequence such that $S(t_n)z_0\to z_\infty$ in $\calV^r$
(and ``weakly'' in $\calW$) with $z_\infty\in\calS_0^R$
(and the last component $\vt_\infty$ satisfies \eqref{regostaz2}).
Note that we actually used the fact $\vt(t_n)\to \vt_\infty$
in $H^{3/2}(\Omega)$, which is true because we have
asymptotic boundedness of $\vt(t_n)$ in $H^2(\Omega)$.
This gives a contradiction.
\end{proof}
\noindent%
To prove existence of the global attractor, we need to exhibit however the existence
of a {\sl uniformly absorbing}\/ set, i.e., of a set that eventually contains
bundles of trajectories starting from any bounded set in the phase space. This
is a stronger property with respect of that shown above (indeed, $\calB$ only
absorbs single trajectories). This fact is proved in the following lemma
that adapts the ideas of a classical argument (see, e.g., \cite{B97,Hale}):
\bele\label{lem:omega3}
 Let the assumptions of the previous lemma hold and let $R>0$.
 Let $\calB_0$ be defined as the set of those triples
 $(\vu,\fhi,\vt)\in \calV^{r,R}_{\bzero,m,M}$ such that
 there exists $(\barvu,\barfhi,\barvt)\in \calB$ with
 \begin{equation}\label{om:*4}
    \| \vu - \barvu \|_{H^{1+r}(\Omega)}
     + \| \fhi - \barfhi \|_{H^{3}(\Omega)}
     + \| \vt - \barvt \|_{H^{3/2}(\Omega)} < c_*.
 \end{equation}
 Then, let also
 \begin{equation}\label{om:*5}
    \calB_1:= \bigcup_{t\ge 0} S(t)\calB_0.
 \end{equation}
 Then, $\calB_1$ is a bounded in $\calV^{r}$ and is a uniformly absorbing set.
\enle
\begin{proof}
First of all, note that, by construction, if $(\vu,\fhi,\vt)\in \calB_0$,
then $\vt(x)> e^{-R}/2$ for every $x\in\Omega$. Hence, in particular,
$\calB_0$ is a bounded subset of $\calV^r$ and, consequently, $\calB_1$ is also bounded in $\calV^r$ by virtue
of the uniform estimates (recall, e.g., \eqref{ACtotal}).
Moreover, $\calB_0$ is an open set in $H^{1+r}(\Omega)\times H^3(\Omega) \times  H^{3/2}(\Omega)$
because it is the union over $(\barvu,\barfhi,\barvt)$ ranging in $\calB$ of triples satisfying
the strict inequality \eqref{om:*4}.

Let now $B$ be a given bounded set of $\calV^{r,R}_{\bzero,m,M}$.
We then claim that there exists $T_B$ such that, for any $t\ge T_B$,
$S(t) B\subset \calB_1$. To prove this fact, let us argue once more by contradiction.
Namely, let us assume there exist a sequence $\{z_n\}\subset B$ and a sequence
$t_n\nearrow\infty$ such that $S(t_n)z_n\not\in \calB_1$ for any $n\in\NN$.
Then, we observe that, due to uniform smoothing of
trajectories, $S(1)B$ is bounded in $\calW$ and consequently relatively
compact in $\calV^r$. Hence, we can assume that (up to a subsequence)
$S(1)z_n$ tends to some limit $z^1$, say,
in $\calV^r$ (of course a stronger convergence
holds). We claim that $S(t)z^1\not\in \calB_0$ for any $t\ge 1$.
Indeed, let us fix $t\ge 1$. Then, at least for $n$ large enough,
we have that $S(t_n)z_n=S(t_n-t-1) S(t) S(1) z_n$ and $S(t)S(1)z_n$
cannot lie in $\calB_0$, otherwise, by \eqref{om:*5}, it would be $S(t_n)z_n\in \calB_1$,
a contradiction. Hence, $S(t)S(1)z_n$ does not lie in $\calB_0$,
at least for $n$ large enough depending on the chosen $t$.
This means that, for any $n$ large enough, $S(t)S(1)z_n$
lies in the complement of $\calB_0$, which is a closed set
in $H^{1+r}(\Omega)\times H^3(\Omega) \times H^{3/2}(\Omega)$.
In view of the facts that $S(1)z_n$ tends to $z^1$
in $H^{1+r}(\Omega)\times H^3(\Omega) \times H^{3/2}(\Omega)$
and of the continuity of the operator $S(t)$ from that space into
itself, we can take the limit $n\nearrow\infty$ to deduce that
$S(t)z^1$ lies in the complement of $\calB_0$ for all $t\ge 0$.
On the other hand, because $z^1 \in \calV^{r,R}_{\bzero,m,M}$,
the trajectory starting from $z^1$ has a nonempty $\omega$-limit
set all of whose elements lie in $\calS_0^R$,
hence in $\calB_0$, which gives a contradiction.
\end{proof}



\end{document}